\pgfplotsset{compat=1.3}
\def\imod#1{\allowbreak\mkern10mu({\operator@font mod}\,\,#1)}
\theoremstyle{plain}
\newtheorem{theorem}{Theorem}[section]
\newtheorem{lemma}[theorem]{Lemma}
\newtheorem*{lemma*}{Lemma}
\newtheorem{proposition}[theorem]{Proposition}
\newtheorem*{proposition*}{Proposition}
\newtheorem*{observation*}{Observation}
\theoremstyle{definition}
\newtheorem{assumption}{Assumption}[section]
\theoremstyle{remark}
\newtheorem{remark}{Remark}[section]
\newtheorem*{remark*}{Remark}
\newtheorem{example}[theorem]{Example}
\newtheorem*{example*}{Example}
\DeclareMathOperator{\id}{id}
\definecolor{gold}{rgb}{0.85,0.65,0}
\newcommand{\ip}[2]{\left\langle #1 , #2 \right\rangle}    %
\def\beq{\begin{equation}}
\def\eeq{\end{equation}}
\def\fnote#1{\footnote}
\newcommand{\epr}{\hfill\hbox{\hskip 4pt \vrule width 5pt height 6pt depth 1.5pt}\vspace{0.0cm}\par}
\def\ra{\rangle}
\def\la{\langle}
\newcommand{\grad}{\ensuremath{\nabla}}
\def\N{{\mathbb{N}}}
\def\R{{\mathbb{R}}}
\def\cO{{\cal O}}
\def\cP{{\cal P}}
\newcommand{\bbE}{\mathbb{E}}
\newcommand{\bbN}{\mathbb{N}}
\newcommand{\bbP}{\mathbb{P}}
\newcommand{\bbR}{\mathbb{R}}
\newcommand{\CB}{\mathcal{B}}
\newcommand{\CF}{\mathcal{F}}
\newcommand{\CP}{\mathcal{P}}
\DeclareMathOperator{\Proj}{Proj}
\DeclareMathOperator{\argmin}{arg\,min}
\DeclareMathOperator{\Skew}{Skew}
\DeclareMathOperator{\Tr}{Tr}
\DeclareMathOperator{\Image}{Im}
\DeclareMathOperator{\Exp}{Exp}
\DeclareMathOperator{\St}{St}
\DeclareMathOperator{\Expm}{Expm}
\def\dim{\mathop{{\rm dim}\,}}
\DeclareMathOperator{\Span}{Span}
\DeclareMathOperator{\Ker}{Ker}
\def\log{\mathop{{\rm log}}}
\def\Halmos{}
\begin{document}

\title{Coordinate Descent Without Coordinates:\\ Tangent Subspace Descent on Riemannian Manifolds}
\author[1]{David Huckleberry Gutman}
\author[2]{Nam Ho-Nguyen}
\affil[1]{Texas Tech University}
\affil[2]{University of Sydney}

\maketitle

\begin{abstract}
We extend coordinate descent to manifold domains, and provide convergence analyses for geodesically convex and non-convex smooth objective functions. Our key insight is to draw an analogy between coordinate blocks in Euclidean space and tangent subspaces of a manifold. Hence, our method is called tangent subspace descent (TSD). The core principle behind ensuring convergence of TSD is the appropriate choice of subspace at each iteration. To this end, we propose two novel conditions, the gap ensuring and $C$-randomized norm conditions on deterministic and randomized modes of subspace selection respectively, that promise convergence for smooth functions and that are satisfied in practical contexts. We propose two subspace selection rules of particular practical interest that satisfy these conditions: a deterministic one for the manifold of square orthogonal matrices, and a randomized one for the Stiefel manifold. Our proof-of-concept numerical experiments on the orthogonal Procrustes problem demonstrate TSD's efficacy.

\end{abstract}

\section{Introduction}\label{sec:intro}

The block coordinate descent (BCD) method \cite{BeckTetruashvili2013,Nesterov2012,RichtarikTakac2014,FrongilloReid2015,Leventhal10} is a popular choice of algorithm for solving
\[
\min_{x\in \R^n}f(x),
\]
where $f:\R^n \to \R$ is a continuously differentiable function. At each iteration, BCD employs a single gradient step to $f$ in a block of variables, chosen at random or in a cyclic fashion, while holding all other variables constant. Symbolically, we can write this key update step as
\begin{equation*}\label{eqn:BCD}
y^k = y^{k-1} - P_{i_k} \nabla f(y^{k-1}), \quad k=1,2,\ldots
\end{equation*}
where $\{P_1,\ldots, P_m\}$ is a set of matrices each with $n$ rows such that $\begin{bmatrix} P_1 & | & \hdots & | & P_m\end{bmatrix}=I_{n\times n}$, and $i_k\in\{1,\ldots,m\}$ is selected in a random or cyclic fashion. One may regard this as an alternating minimization scheme where exact minimization with respect to a block of variables is replaced with a gradient descent step. When $n$ is extremely large, BCD is advantageous compared to standard gradient-based algorithms, since computing $P_{i_k} \nabla f(y^{k-1})$ only requires computing partial derivatives in the $P_{i_k}$ block, and updating only these variables, is relatively easy compared with computing the full gradient $\grad f(y^{k-1})$ and updating all variables.

Due to the increasing importance of optimization on manifolds to large-scale data science problems, there has been a flurry of activity analyzing first-order methods adapted to this problem class \cite{HuangWei2019,ChenEtAl2018,LiuBoumal2019,ZhangEtAl2019,ZhangEtAl2016,ZhangSra2018,LiuEtAl2017,SatoEtAl2019,FerreiraEtAl2019}. In contrast to many other methods depending on first-order information, such as gradient descent and stochastic gradient descent, it is not obvious how to extend BCD to capture problems where the feasible set is an arbitrary Riemannian manifold $M$. The primary challenge in adapting BCD to an arbitrary manifold is its inherent dependence on the linear structure of $\R^n$, which allows us to reuse the same block coordinate decomposition $\{P_1,\ldots,P_m\}$ at different iterations.

This paper extends BCD methods to the problem
\[
\min_{x \in M} f(x)
\]
where $M$ is an $n$-dimensional Riemannian manifold and $f:M\to\R$ is a continuously differentiable function. Our key insight is that inexact alternating minimization over blocks of coordinates in $\R^n$ generalizes to inexact alternating minimization over subspaces of the manifold's tangent space at the current iterate. The resultant algorithm, \emph{Tangent Subspace Descent} (TSD), is the focus of our paper. TSD reformulates the key update step of BCD as
\begin{equation*}\label{eqn:TSD}
y^k = \Exp_{y^{k-1}}\left(-\eta \cP(y^{k-1},k)\nabla f(y^{k-1})\right), \quad k=1,2,\ldots
\end{equation*}
where $\Exp_{y^k}$ is the manifold's exponential map at $y^k$, $\nabla f$ denotes the Riemannian gradient of $f$, $\cP(y^{k-1},k)$ is an orthogonal (with respect to the Riemannian metric) projection onto a subspace of the tangent space $T_{y^{k-1}} M$ at $y^{k-1}$. We formally define these diffeo-geometric objects in Section \ref{sec:problemdef}.

TSD is a generalization of BCD, since a block of coordinates can also be interpreted as a tangent subspace. In the Euclidean setting $M=\bbR^n$, tangent spaces at diffferent points are isomorphic to $\bbR^n$ itself, and are easily identified with each other. This is the key property that allows us to reuse the same block coordinate decomposition $\{P_1,\ldots,P_m\}$ at different iterations. By using the same decomposition, BCD ensures that all possible directions of descent lay in the span of the projections, and that the collection of possible descent directions do not progressively cluster near a lower dimensional subspace. Such clustering can destroy the convergence of an iterative method if the gradient remains roughly orthogonal to said lower dimensional subspace. Examples of this phenomenon are presented in Theorems \ref{thm:projection-failure-randomized} and \ref{thm:projection-failure}.

\subsection{Contributions and Related Work}\label{subsec:contribution}

Different tangent spaces on a general manifold $M$, however, are not so easily related as in the Euclidean case, which necessitates the introduction of $\cP$ in TSD, what we call a \emph{subspace selection rule}. Unlike the Euclidean setting, the subspace selection rule $\cP$ can choose subspace decompositions that may vary across iterations $t$, and may be non-orthogonal. A key goal of our paper is to precisely define new concepts which quantify how $\cP$ affects the span and clustering of descent directions on a general manifold, and show how these can be used to analyze the convergence of TSD. Our key contributions are as follows.
\begin{itemize}
\item Through counterexamples, we demonstrate that care must be taken in specifying $\CP$. Section \ref{sec:general-selection} describes subspace selection rules which satisfy simple conditions one might believe ensure convergence, but in fact, do not. This motivates the introduction of the novel and mild conditions, the $(\gamma,r)$-\emph{gap ensuring} (Assumption \ref{ass:gap}) and the $C$-\emph{randomized norm} (Assumption \ref{ass:random-norm}) conditions for deterministic and randomized rules $\cP$ respectively. These conditions quantify the `information loss' of $\cP$.

\item Our analysis, namely Theorems \ref{thm:deterministic-descent-rates} and \ref{thm:randomized-descent-rates}, prove that these assumptions ensure convergence of TSD. The convergence analysis for deterministic rules with Assumption \ref{ass:gap} requires a non-trivial modification of the existing analysis for BCD (see Remark \ref{rem:gap-ensuring-distance}). The convergence analysis for randomized rules is sufficiently general to allow selection from an infinite collection of subspaces, unlike in previous work.

\item Our new conditions restrict the choice of $\cP$, but are still sufficiently flexible to capture a number of useful settings. We provide examples of different manifolds and subspace selection rules in Sections \ref{sec:deterministic-examples} and \ref{sec:randomized-examples}. In particular, Section \ref{sec:randomized-stiefel} presents a new randomized selection rule for Stiefel manifolds, which satisfies the $C$-randomized norm condition and thus ensures convergence of TSD. In addition, Section \ref{sec:orthogonal} elaborates on the special case of the manifold of orthogonal $n \times n$ real matrices, for which we provide a class of deterministic subspace selection rules, and show that it satisfies the $(\gamma,r)$-gap ensuring condition for convergence of TSD. Furthermore, we show that this class of rules result in computationally cheap and sparse updates for TSD, of order $O(n)$ cost per update as opposed to $O(n^3)$ for standard Riemannian gradient descent updates.

\item We conduct proof-of-concept numerical experiments on the  well-known orthogonal Procrustes problem $\min_{X \in O_n} \|AX - B\|_F^2$, with $A\in\R^{d \times n}$, $B\in\R^{d \times n}$, and $O_n$ being the manifold of square orthogonal matrices, which demonstrate the efficacy of TSD over standard Riemannian gradient descent schemes.
\end{itemize}

\citet{FrongilloReid2015} considered randomized and not necessarily orthogonal subspace descent in the Euclidean setting. We significantly generalize this to cyclic and randomized subspace descent on manifolds. To our knowledge, the only papers which have considered coordinate descent in a non-Euclidean setting are \citet{ShalitChechik2014,GaoEtAl2018}. Both of these examine specifically the Stiefel manifold of orthogonal matrices. The algorithm by \citet{GaoEtAl2018} updates a single column of the matrix at each step, but then adds a so-called ``correction'' which involves computing a singular value decomposition, and is not a direct generalization of BCD. The algorithm by \citet{ShalitChechik2014} is a specialization of our proposed one to $O_n$, for a particular randomized subspace selection rule.

In contrast to \cite{FrongilloReid2015,GaoEtAl2018,ShalitChechik2014}, our work examines a more general setting on any manifold, with any geometry, for both deterministic and randomized (valid) subspace selection rules. Our analysis also gives global convergence guarantees for geodesically convex objectives, in addition to convergence to stationary points for non-convex objectives. Of particular note, the randomized selection rule in Theorem \ref{thm:stiefel-selection} for general Stiefel manifolds broadly generalizes the randomized rule in \cite{ShalitChechik2014} for $O_n$. Furthermore, to our knowledge, the deterministic selection rule in Section \ref{sec:orthogonal} is the first for optimization over $O_n$.

Note also that our gap ensuring condition is closely related to principal angles between subspaces, which are known to control the convergence rates of cyclic projection schemes as far back as \cite{Deutsch95}. Principal angles between subspaces have also appeared in convergence rates for alternating projection schemes on manifolds \cite{Lewis08}.
	
\subsection{Outline}

In Section \ref{sec:problemdef}, we provide the language of, and convergence analysis tools for, manifold-constrained optimization. In Section \ref{sec:general}, we present Tangent Subspace Descent and two theorems (Theorems \ref{thm:projection-failure} and \ref{thm:projection-failure-randomized}) explaining the need for the judicious selection of subspace selection rules. In Section \ref{sec:deterministic}, we elaborate the $(\gamma,r)$-gap ensuring condition, delineate examples of deterministic subspace selection rules satisfying it (including cyclic BCD), and analyze the convergence TSD equipped with such a rule for smooth (both g-convex and non g-convex) functions. In Section \ref{sec:randomized}, we repeat this program for randomized subspace selection rules $C$-randomized norm. We show that randomized BCD can be seen as an instance of TSD with a rule satisfying this condition. This section also provides the new randomized subspace selection rule for general Stiefel manifolds (Theorem \ref{thm:stiefel-selection}). In Section \ref{sec:orthogonal}, we propose the first deterministic rule for subspace selection over the set of square orthogonal matrices and apply TSD to the orthogonal Procrustes problem.

\section{Optimization on Manifolds}\label{sec:problemdef}

In this section, we present some of the basic terminology and convergence analysis tools for optimization on manifolds. We will regularly use the standard notation and known results of Riemannian manifolds, cf. \citet{Lee2012book,Lee2018book,AbsilEtAl2007book,Vishnoi2018}. We consider the following general optimization problem
\begin{equation}\label{eqn:opt-problem}
f^* = \min_x \left\{ f(x) : x \in M \right\},
\end{equation}
where $M$ is an $n$-dimensional Riemannian manifold, and $f:M\to\R$ is a differentiable function. 
 
For each point $x\in M$, we denote the \emph{tangent space} to $M$ at $x$ as $T_x M$. Recall that $T_x M$ is an $n$-dimensional real vector space. We denote the tangent bundle of $M$, the union of all tangent spaces of $M$, as $TM:=\bigcup_{x\in M} T_x M$; it is a smooth manifold in its own right. A \emph{vector field} is a smooth map $V:M \to TM$ satisfying $V(x) \in T_x M$ for each $x \in M$. The \emph{Riemannian metric} on $M$ is a collection of inner products $\{\la \cdot, \cdot \ra_x\}_{x\in M}$ on each $T_x M$ which is smoothly varying in the sense that for any two vector fields $V,W:M \to TM$, $x \mapsto \la V(x), W(x) \ra_x$ is a smooth function on $M$. The norm on $T_xM$ induced by $\ip{\cdot}{\cdot}_x$ is denoted $\|\cdot\|_x$. 

The \emph{length} of a piecewise smooth curve $\gamma:[0,1]\to M$ with $\gamma(0)=x$, $\gamma(1)=y$, and derivative $\gamma':[0,1]\to TM$ is defined as
\[ 
L(\gamma)=\int_0^1 \|\gamma'(t)\|_{\gamma(t)} dt.
\]
The \emph{Riemannian distance} $d(x,y)$ between points $x$ and $y$ is the infimum of the lengths of all piecewise smooth curves joining $x$ and $y$. The manifold $M$ is a metric space when endowed with the Riemannian distance. If $M$ is complete and connected under this metric then the distance can be attained by some smooth curve. We say that a curve $\gamma:[0,1] \to M$ with $\gamma(0)=x$ and $\gamma(1)=y$ is a \emph{geodesic between $x$ and $y$} if it is locally distance minimizing. Given any two points $x,y\in M$, we can find a geodesic $\gamma$ between them such that $L(\gamma) = d(x,y)$. 

The \emph{exponential map} at a point $x \in M$ is denoted $\Exp_x:T_x M \to M$ gives us a way to generalize the operation $x+v$ in Euclidean space, where $v \in T_x M$. We have $\Exp_x(0) = x$ for any $x \in M$. The exponential map gives rise to geodesics: the curve $t \mapsto \gamma(t) := \Exp_x(tv)$ is always a geodesic between $x$ and $\Exp_x(v)$. We assume that $\Exp_x(v)$ is well defined for all $x \in M$, $v \in T_x M$, that is, $M$ is \emph{geodesically complete}. The Hopf-Rinow theorem states that geodesic completeness and completeness under the Riemannian distance are equivalent. Thus, this is a decidedly mild assumption. We define a set-valued \emph{inverse exponential map}, $\Exp_x^{-1}: M \rightrightarrows T_x M$ as \[
\Exp_x^{-1}(y) = \argmin_{v \in T_x M} \left\{ \|v\|_x : \Exp_x(v) = y \right\}.
\]
Furthermore, we have that for any $v \in \Exp_x^{-1}(y)$, $d(x,y) = \|v\|_x$, and for general $v \in T_x M$, we have $\|v\|_x \geq d(x,\Exp_x(v))$.

For any smooth curve $\gamma:I \to M$ defined on an interval $I \subset \bbR$, there is a well-defined collection of invertible linear maps called \emph{parallel transport operators along $\gamma$}, denoted $\Gamma_{\gamma,t^0}^{t^1}: T_{\gamma(t^0)} M \to T_{\gamma(t^1)} M$, for $t^0,t^1 \in I$. These maps satisfy $(\Gamma_{\gamma,t^0}^{t^1})^{-1} = \Gamma_{\gamma,t^1}^{t^0}$ and $\Gamma_{\gamma,t^0}^{t^0} = \id_{T_{\gamma(t^0)} M}$. Furthermore, a key property is that they preserve the metric, i.e., $\la v,v' \ra_{\gamma(t^0)} = \la \Gamma_{\gamma,t^0}^{t^1} v, \Gamma_{\gamma,t^0}^{t^1} v' \ra_{\gamma(t^1)}$ for any $t^0,t^1 \in I$, $v,v' \in T_{\gamma(t^0)} M$. In this paper, we will only be interested in parallel transport operators along geodesic curves. When $y=\Exp_x(v)$, we use the shorthand $\Gamma_x^y: T_x M \to T_y M$ to denote $\Gamma_{\gamma,0}^1$, where $\gamma(t) := \Exp_x(tv)$ is a geodesic between $x$ and $y$. Note that while $\Gamma_x^y$ depends on $v$, we will suppress this in the notation, since the dependence will be implied from $y = \Exp_x(v)$, which will be clear from context. We will also use the notation $\Gamma_x^y$ when there is a \emph{unique} geodesic curve between $x$ and $y$, i.e., when $\Exp_x^{-1}(y) = \{v\}$ is a singleton. When $y = \Exp_x(v)$ is not clear from context, the fact that $\Exp_x^{-1}(y) = \{v\}$ will be made clear. Note that if $y = \Exp_x(v)$, then $x = \Exp_y(-\Gamma_x^y v)$ and $(\Gamma_x^y)^{-1} = \Gamma_y^x$.

For a differentiable function $f:M\to\R$, the \emph{Riemannian gradient} of $f$ is defined as the element $\nabla f(x)\in T_x M$ such that $\ip{\nabla f(x)}{v}_x=df_x v$ for all $v\in T_x M$, where $df_x: T_x M\to\R$ is the differential of $f$ at $x\in M$. The gradient is key to the analysis of optimization algorithms on manifolds as the necessary first-order stationarity condition $\grad f(x^*) = 0$ for minimizers of \eqref{eqn:opt-problem} still holds. In this case, we say that $x^*$ is a \emph{stationary point} of $f$. This condition is also sufficient when $f$ is $g$-convex: a condition defined shortly. When $f$ is continuously differentiable, it holds that if $x_n\to x^*$ and $\|\nabla f(x_t)\|_{x_t}\to 0$ then $\nabla f(x^*)=0$.

We are interested in two classes of functions $f:M\to\R$ on geodesically complete Riemannian manifolds defined by two different conditions on the gradient:
\begin{align}
&\forall (x,v) \in TM, y=\Exp_x(v), \ \|\Gamma_x^y \grad f(x) - \grad f(y)\|_y \leq L_f \, d(x,y). \label{eqn:smooth}\\
&\forall x,y \in M, v \in \Exp_x^{-1}(y), \ f(y) \geq f(x) + \la \grad f(x), v \ra_x.\label{eqn:g-convex}
\end{align}
We say that a function $f$ satisfying  \eqref{eqn:smooth} is \emph{$L_f$-smooth}, and a function satisfying \eqref{eqn:g-convex} is \emph{geodesically convex}, or g-convex. Note that $\|\Gamma_x^y \grad f(x) - \grad f(y)\|_y = \| \grad f(x) - \Gamma_y^x \grad f(y)\|_x$, and that $L_f$-smoothness of $f$ implies the following upper bound:
\begin{equation}\label{eqn:smooth-ub}
\forall x \in M, v \in T_x M, \ f(\Exp_x(v)) \leq f(x) + \la \grad f(x), v \ra_x + \frac{L_f}{2} \|v\|_x^2.
\end{equation}
In this paper, we use the term smoothness of the function to mean both infinite differentiability and Lipschitz continuity of the gradient. Which definition we mean will be clear from context. Furthermore, \citet[Theorem 2.3]{FerreiraOliveira1998} show that g-convexity is equivalent to the following: for any $x,y \in M$, $v \in \Exp_x^{-1}(y)$, and $\beta \in [0,1]$, $f(\Exp_x(\beta v)) \leq (1-\beta) f(x) + \beta f(y)$.

The convergence analyses of (non-accelerated) gradient-based methods for\\ smooth convex optimization problems is often based on establishing the following so-called `sufficient decrease' relation between iterates:
\begin{equation}\label{eqn:sufficient-decrease}
f(x^{t-1}) - f(x^t) \geq \eta \|\grad f(x^{t-1})\|_{x^{t-1}}^2.\tag{Dec}
\end{equation}
Often, \eqref{eqn:sufficient-decrease} is derived only from smoothness of $f$ rather than convexity: if $f$ is $L_f$-smooth then by selecting $v=-\frac{1}{L_f}\nabla f(x)$ in \eqref{eqn:smooth-ub},
\begin{equation}\label{eqn:smooth-ub-minimum}
f(x)-f\left(\Exp_x\left(-\frac{1}{L_f}\nabla f(x)\right)\right)\geq \frac{1}{L_f}\left\|\nabla f(x)\right\|_x^2.
\end{equation}
Thus, when $f$ only satisfies smoothness, we can get a stationarity guarantee when this sufficient decrease condition is met. The analyses in Sections \ref{sec:deterministic} and \ref{sec:randomized} are based on providing conditions for which \eqref{eqn:sufficient-decrease}, or a suitable modification thereof, is satisfied.

\section{Tangent Subspace Descent on General Riemannian Manifolds}\label{sec:general}

In this section, we introduce the template for tangent subspace descent and elaborate the issues that arise when choosing an appropriate subspace selection rule $\cP$. With the key TSD step \eqref{eqn:TSD} in hand, we state the complete algorithmic template in Algorithm \ref{alg:tangent-subspace-descent}.

\begin{algorithm}[h]\caption{Tangent Subspace Descent}\label{alg:tangent-subspace-descent}
	\KwData{Initial point $x^0 \in M$, subspace selection rule $\cP$, $m\in\N$}
	\KwResult{Sequence $\{x^t\}_{t \geq 1} \subset M$.}
	\For{$t=1,2,\ldots$}{
		Set $y^{t,0} := x^{t-1}$\;	
		\For{$k \in [m]$}{		
			Pick $P_k^{y^{t,k-1}}:=\cP\left(\{y^{t,j}\}_{j=0}^{k-1},k \right)$ and $\eta_{t,k} >0$\;
			Update $y^{t,k}:=\Exp_{y^{t,k-1}}\left(-\eta_{t,k} P_k^{y^{t,k-1}} \nabla f(y^{t,k-1})\right)$\;
			}
		Update $x^t:=y^{t,m}$\;
		}
\end{algorithm}

At iteration $t \geq 1$ of Algorithm \ref{alg:tangent-subspace-descent}, given iterates $\left\{y^{t,k-1}\right\}_{k \in [m]}$ and projections $\left\{P_k^{y^{t,k-1}}\right\}_{k \in [m]}$, we define
\[ P_{t,k} := \Gamma_{y^{t,k-1}}^{y^{t,0}} P^{y^{t,k-1}}_k\Gamma_{y^{t,0}}^{y^{t,k-1}}, \ k\in[m], \qquad \|\cdot\|_{y^{t,0},\cP} := \sqrt{ \sum_{k \in [m]} \|P_{t,k} (\cdot) \|_{y^{t,0}}^2 }. \]
Note that $\|\cdot\|_{y^{t,0},\cP}$ is, in general, a seminorm on $T_{y^{t,0}} M$, and is a norm if and only if $\Span\left( \bigcup_{k \in [m]} \Image(P_{t,k}) \right) = T_{y^{t,0}} M$. Furthermore, if $P_{t,k}$ are projections onto orthogonal subspaces which span $T_{y^{t,0}} M$, then $\|\cdot\|_{y^{t,0},\cP} = \|\cdot\|_{y^{t,0}}$. This is the case for BCD, and leads to sufficient decrease bounds like \eqref{eqn:sufficient-decrease}, yet is not true in general for TSD. Instead, for TSD, further conditions must be imposed and a new analysis is required to relate the two norms. The choice of the subspace selection rule $\cP$ is a critical element of this. Note that $\cP$ can cover both deterministic and randomized rules. For randomized rules, we will always take $m=1$, and denote for convenience $P_t := \CP(y^{t,0},1) = P_1^{y^{t,0}}$ (this is a randomly chosen projection on $T_{x^{t-1}} M$). We spend the remainder of this section discussing issues regarding $\cP$ that can inhibit the convergence of Algorithm \ref{alg:tangent-subspace-descent} and that motivate the conditions we place upon $\cP$ in Sections \ref{sec:deterministic} and \ref{sec:randomized}.

\subsection{The Problem of Subspace Selection}\label{sec:general-selection}

Since $M$ is not (in general) a vector space, it cannot be decomposed into a collection of subspaces, so we need to pick different subspaces to project onto at each point and iteration. While the tangent space $T_x M$ is a vector space for each $x\in M$, the spaces $T_x M$ and $T_y M$ can be very different for $y\neq x$. Consider the example of the tangent space to the north pole of the sphere, $\mathbb{S}^2$. This space is nothing more than an affine plane parallel to the $xy$-plane that lies flat on the sphere at the pole. However, if we translate this plane to any point on the equator then we see the plane cuts through the sphere so it no longer lies tangent to the sphere. The differing structures of the tangent spaces at different points on $M$ thus disallows us from picking a fixed decomposition of a single tangent space that we use at every subsequent iteration, as in BCD.

Furthermore, the choice of subspaces dictates not only the convergence rate of this algorithm, but whether it converges at all. Even simple examples on $\R^n$ show how badly the algorithm can breakdown when $\cP$ is chosen poorly. When $V$ is a Hilbert space and $S\subseteq V$ is a subspace, we will let $\Pi_S$ denote the orthogonal projection onto $S$.

\begin{example}
Let $M=\R^n$, $\cP \equiv \Pi_{\Span(\{e_1\})}$, $m=1$, and $f(x)=\frac{1}{2}\|x\|_2^2$. If $x^0=e_1+e_2$ and $\eta_{t,k}=1$ then $x^t=e_2$ for $t\geq 1$, but $\argmin_{x\in\R^n}f(x)=\{0\}$.
\epr
\end{example}

\noindent The primary issue in the above example is that $\cP$ produces a spartan collection of subspaces. Specifically, $\Span(\{e_1\}) \neq\R^2$. Recasting this in terms of tangent spaces, $\Span\left(\Image \Gamma_y^x \Pi_{\Span(\{e_1\})}\Gamma_x^y\right)\neq T_x\R^2\cong\R^2$ for any $x,y\in\R^2$. 

One is tempted to believe for deterministic rules it might suffice simply to pick a $\cP$ that promises a ``spanning condition" such as
\begin{equation}\label{eqn:spanning-condition}
\Span\left(\bigcup_{k \in [m]} \Image(P_{t,k})\right)=T_{y^{t,0}} M
\end{equation}
for any sequence $y^{t,0},\ldots,y^{t,m}$ generated by one execution of the inner loop in Algorithm \ref{alg:tangent-subspace-descent}. Note that this spanning condition is readily seen to be equivalent to the requirement that $\|\cdot\|_{y^{t,0},\cP}$ is a norm (as opposed to just a seminorm). Thus, for randomized rules, we are similarly tempted to conjecture that if
\begin{equation}\label{eqn:spanning-condition-random}
v \mapsto \sqrt{\bbE[\|P_t v\|_{x^{t-1}}^2 \mid x^{t-1}]}
\end{equation}
is a norm on $T_{x^{t-1}} M$ then convergence is guaranteed. Unfortunately, these conditions are insufficient. We now exhibit deterministic and randomized selection rules $\cP$ satisfying \eqref{eqn:spanning-condition} and \eqref{eqn:spanning-condition-random} for which convergence of TSD fails.

For both examples, we take $M = \bbR^n$ and $f(x) = \frac{1}{2} \|x\|_2^2$, which is a $1$-smooth function. Note also that $T_x M = \bbR^n$ for each $x \in \bbR^n$, $\Exp_x(v) = x+v$, and the Riemannian metric $\la \cdot, \cdot \ra_x$ is just the usual inner product on $\bbR^n$. For the deterministic setting, we take $m = n$ in Algorithm \ref{alg:tangent-subspace-descent}, and denote for convenience $\|\cdot\|_{t,\CP} := \|\cdot\|_{y^{t,0},\CP}$. As mentioned above, for the randomized setting, we have $m=1$ and $P_t := \cP(y^{t,0},1)$ is a random projection on $T_{x^{t-1}} M$. Furthermore, the stepsizes in Algorithm \ref{alg:tangent-subspace-descent} will be taken as $1$, since $\grad f(x) = x$ and for any orthogonal projection $\Pi:\bbR^n \to \bbR^n$, we have
\[ \min_\eta f(x-\eta \Pi \grad f(x)) = \frac{1}{2} \|x - \Pi x\|_2^2 = f(x) - f(\Pi x). \]

We start with the randomized setting.
\begin{theorem}\label{thm:projection-failure-randomized}
There exists a randomized projection selection rule $\CP$ satisfying \eqref{eqn:spanning-condition-random} and constant $\epsilon > 0$ such that, at any $t \geq 1$ in Algorithm \ref{alg:tangent-subspace-descent}, $f(x^t) > \epsilon$. Furthermore, \[
\lim_{t\to\infty} \sup_{v \neq 0} \frac{\|v\|_2}{\sqrt{\bbE[\|P_t v\|_2^2 \mid x^{t-1}]}} = \infty
\]
almost surely.
\end{theorem}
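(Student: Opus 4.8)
The plan is to work, as the surrounding text prescribes, on $M=\R^n$ with $f(x)=\tfrac12\|x\|_2^2$, $m=1$, unit stepsizes, so the TSD update reduces to $x^t=(I-P_t)x^{t-1}$ with $P_t$ a random, rank‑one, orthogonal projection, and $\|x^t\|_2^2=\|x^{t-1}\|_2^2-\|P_tx^{t-1}\|_2^2$. The idea is to engineer a \emph{position‑dependent} randomized rule $\CP$ that routes the iterate along a fixed countable ``trajectory'' $z_0,z_1,z_2,\dots\in\R^n$ of pairwise distinct points whose norms are strictly decreasing yet bounded below: at a trajectory point $z_\ell$ the rule will, with probability $\tfrac12$, output a projection orthogonal to $z_\ell$ (the iterate stays at $z_\ell$) and, with probability $\tfrac12$, output a projection that advances it to $z_{\ell+1}$. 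Concretely (I describe $n=2$; the general case is a bookkeeping variant noted below), fix angles $\beta_\ell\in(0,\tfrac{\pi}{2})$ with $\beta_\ell\uparrow\tfrac{\pi}{2}$ and $\prod_{\ell\ge0}\sin\beta_\ell>0$, e.g.\ $\sin\beta_\ell=1-(\ell+2)^{-2}$. Put $z_0=e_1$; given $z_\ell$, let $\hat z_\ell=z_\ell/\|z_\ell\|_2$, let $w_\ell$ be a fixed unit vector orthogonal to $\hat z_\ell$, set $u_\ell^{+}:=\cos\beta_\ell\,\hat z_\ell+\sin\beta_\ell\,w_\ell$, and \emph{define} $z_{\ell+1}:=(I-u_\ell^{+}(u_\ell^{+})^{\top})z_\ell$; a one‑line computation gives $\|z_{\ell+1}\|_2=\|z_\ell\|_2\sin\beta_\ell$, so the $z_\ell$ are distinct. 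The rule is: $\CP(z_\ell,1)=uu^{\top}$ with $u=w_\ell$ or $u=u_\ell^{+}$, each with probability $\tfrac12$; off the trajectory $\CP$ is defined by the same recipe with $\beta=\tfrac{\pi}{4}$ (so \eqref{eqn:spanning-condition-random} continues to hold there, but this is immaterial since, started at $x^0=z_0$, the iterate never leaves $\{z_\ell\}$). At $z_\ell$ the update either fixes the iterate (when $u=w_\ell$, since then $P_tx^{t-1}=0$) or sends it to $z_{\ell+1}$ (when $u=u_\ell^{+}$), each with probability $\tfrac12$ independently; hence, writing $\ell(t)$ for the trajectory index of $x^t$, one has $\ell(t)=\sum_{s\le t}B_s$ with $B_s$ i.i.d.\ Bernoulli$(\tfrac12)$.

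With this in place I would verify the three assertions. \textbf{(i) Spanning condition \eqref{eqn:spanning-condition-random}.} Conditionally on $x^{t-1}=z_\ell$ we have $\bbE[\|P_tv\|_2^2\mid x^{t-1}]=v^{\top}\big(\tfrac12 w_\ell w_\ell^{\top}+\tfrac12 u_\ell^{+}(u_\ell^{+})^{\top}\big)v$; in the basis $\{\hat z_\ell,w_\ell\}$ this $2\times2$ matrix has trace $1$ and determinant $\tfrac14\cos^2\beta_\ell>0$, hence is positive definite with least eigenvalue $\lambda_\ell=\tfrac12(1-\sin\beta_\ell)>0$, so $v\mapsto\sqrt{\bbE[\|P_tv\|_2^2\mid x^{t-1}]}$ is a genuine norm. \textbf{(ii) $f$ bounded away from $0$.} Each step applies an orthogonal projection, so $\|x^t\|_2$ is nonincreasing, and in fact $\|x^t\|_2=\|x^0\|_2\prod_{j<\ell(t)}\sin\beta_j\ge\|x^0\|_2\prod_{j\ge0}\sin\beta_j>0$; with $\|x^0\|_2=1$ and $\prod_{j\ge0}(1-(j+2)^{-2})=\tfrac12$ this gives $f(x^t)>\epsilon:=\tfrac18$ for all $t\ge1$. \textbf{(iii) Degeneracy.} By the eigenvalue computation in (i),
\[
\sup_{v\neq0}\frac{\|v\|_2}{\sqrt{\bbE[\|P_tv\|_2^2\mid x^{t-1}]}}=\lambda_{\ell(t-1)}^{-1/2}=\big(\tfrac12(1-\sin\beta_{\ell(t-1)})\big)^{-1/2}.
\]
Since $\ell(t)=\sum_{s\le t}B_s\to\infty$ almost surely and $\sin\beta_\ell\uparrow1$, the right‑hand side tends to $+\infty$ almost surely, which is the claimed limit.

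The main obstacle — and the reason the construction needs the trajectory rather than a naive ``use $\beta_t$ at iteration $t$'' schedule — is that $\CP$ is a fixed Markov kernel that sees only the current point $y^{t,0}$ and the inner index $k=1$, not the outer counter $t$; the schedule $\{\beta_\ell\}$ must therefore be \emph{encoded geometrically}, which is exactly why the iterate is pushed through the countable set of distinct points $z_\ell$ on which the correct $\beta_\ell$ is recoverable from position. The second delicate point is that \eqref{eqn:spanning-condition-random} forces $\bbE[P_t\mid x^{t-1}]\succ0$, hence forces both a strictly positive ``advance'' probability and an advance direction with nonzero component along $x^{t-1}$, so $\|x^t\|_2$ must strictly decrease with positive probability at every step; this is reconciled with $\inf_t f(x^t)>0$ only because the per‑step shrink factors $\sin\beta_\ell\to1$ fast enough that $\prod_\ell\sin\beta_\ell>0$ — and this same fact is what makes $\lambda_\ell=\tfrac12(1-\sin\beta_\ell)\to0$, i.e.\ what drives the degeneracy. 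Finally, passing from $n=2$ to general $n\ge2$ only requires spreading the ``stay'' mass of $u$ uniformly over a (measurably chosen) orthonormal basis $w_\ell^{(1)},\dots,w_\ell^{(n-1)}$ of $(\hat z_\ell)^{\perp}$ and taking $u_\ell^{+}:=\cos\beta_\ell\,\hat z_\ell+\sin\beta_\ell\,w_\ell^{(1)}$ for the advance, so that $\bbE[P_t\mid x^{t-1}]$ is positive definite on all of $\R^n$; this merely replaces the weight $\tfrac12$ on ``stay'' vectors by $\tfrac1{2(n-1)}$ and changes the least eigenvalue to a quantity that is still positive and $\Theta(\cos^2\beta_\ell)\to0$, leaving every conclusion intact — it is the only genuinely tedious part.
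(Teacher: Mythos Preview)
Your construction is correct and genuinely different from the paper's. The paper does not pre-build a trajectory; instead, at each $x^{t-1}$ it takes the slice $H_t=\{v:\|v\|_2=1,\ \langle v,x^{t-1}\rangle=\sqrt{f(x^{t-1})-\epsilon}\}$ (with $\epsilon=f(x^0)/2$), picks any $n$ spanning unit vectors from $H_t$, and selects one uniformly. Because every candidate has the \emph{same} inner product with $x^{t-1}$, the recursion $f(x^t)=\tfrac12(f(x^{t-1})+\epsilon)$ is deterministic, giving $f(x^t)=\epsilon(1+2^{-t})$ immediately; convergence $x^t\to\bar x$ and $\langle v_{t,i},\bar x\rangle\to 0$ then force the norm ratio to blow up.

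What each approach buys: the paper's construction is shorter and yields a closed-form recursion for $f(x^t)$, sidestepping any product estimate; it also works for all $n$ without modification. Your trajectory-plus-Bernoulli construction is more explicit about the geometric mechanism (the projections visibly cluster toward $\hat z_\ell^{\perp}$), and it makes transparent the point you emphasise---that $\CP$ must be encoded through position alone---which the paper leaves implicit. Your eigenvalue computation $\lambda_\ell=\tfrac12(1-\sin\beta_\ell)$ also gives an exact expression for the blow-up rate, whereas the paper only argues the ratio diverges. Both proofs are valid; yours trades brevity for a clearer picture of why the spanning condition alone cannot prevent degeneration.
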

\begin{proof}[Proof of Theorem \ref{thm:projection-failure-randomized}]
We define the following projection selection rule. Given a starting point $x^0 \neq 0$, denote $\epsilon := f(x^0)/2 > 0$. For $t \geq 1$, let $H_t = \left\{ v \in \bbR^n : \la v,x^{t-1} \ra = \sqrt{f(x^{t-1}) - \epsilon} \right\} \cap \left\{ v \in \bbR^n : \|v\|_2 = 1\right\}$. We choose $\{v_{t,i}\}_{i \in [n]} \subset H_t$ such that $\Span\left( \{v_{t,i}\}_{i \in [n]} \right) = \bbR^n$ (this is possible due to a simple dimension counting argument). We then choose $v_t$ uniformly at random from $\{v_{t,i}\}_{i \in [n]}$, and set $P_t$ to be the projection onto the subspace spanned by $v_t$, i.e., $x \mapsto \la v_t,x \ra v_t$.

We thus have $x^t = x^{t-1} - P_t x^{t-1}$ and
\[ f(x^t) = f(x^{t-1}) - f(P_t x^{t-1}) = f(x^{t-1}) - \frac{1}{2} \la v_t, x^{t-1} \ra^2 = \frac{f(x^{t-1}) + \epsilon}{2}. \]
Solving this recurrence relation with the initial condition $f(x^0) = 2\epsilon$, we get $f(x^t) = \epsilon(1+2^{-t}) > \epsilon$.

Note that this also implies that for any $t \geq 1$, $\la v_t, x^{t-1} \ra = \sqrt{f(x^{t-1}) - \epsilon} = \sqrt{\epsilon 2^{-t}}$. Consequently, we see that
\[ x^t = x^0 + \sum_{s \in [t]} \la v_s, x^{s-1} \ra v_s = x^0 + \sqrt{\epsilon} \sum_{s \in [t]} 2^{-s/2} v_s \]
must converge to some $\bar{x} \in \bbR^n$, since $\{2^{-s/2}\}_{s \geq 1}$ is an absolutely convergent sequence and $\{v_s\}_{s \geq 1}$ are from the unit ball. It follows that for any $t \geq 1$ and $i \in [n]$, $\la v_{t,i}, \bar{x} \ra \to 0$. Thus,
\[ \bbE\left[ \|P_t \bar{x}\|_2^2 \mid x^{t-1} \right] = \frac{1}{n} \sum_{i \in [n]} \la v_{t,i}, \bar{x} \ra^2 \to 0, \]
so
\[ \lim_{t\to\infty} \sup_{v \neq 0} \frac{\|v\|_2}{\sqrt{\bbE[\|P_t v\|_2^2 \mid x^{t-1}]}} \geq \lim_{t\to\infty} \frac{\|\bar{x}\|_2}{\sqrt{\bbE[\|P_t \bar{x}\|_2^2 \mid x^{t-1}]}} = \infty. \]
\Halmos
\end{proof}

A non-convergent deterministic rule can be constructed in a similar manner, with small modifications.

\begin{theorem}\label{thm:projection-failure}
There exists a deterministic projection selection rule $\CP$ satisfying \eqref{eqn:spanning-condition} with $m=n$ and constant $\epsilon > 0$ such that, at any $t \geq 1$ in Algorithm \ref{alg:tangent-subspace-descent}, $f(x^t) > \epsilon$. Furthermore, 
\[
\lim_{t\to\infty} \sup_{v \neq 0} \frac{\|v\|_2}{\|v\|_{t,\cP} } = \infty. 
\]
\end{theorem}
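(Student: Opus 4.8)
The plan is to mirror the randomized construction from the proof of Theorem~\ref{thm:projection-failure-randomized}, using the extra room afforded by the length-$m=n$ inner loop to keep the span condition intact. Fix a starting point $x^0 \neq 0$ and set $\epsilon := f(x^0)/2 > 0$; the rule $\cP$ is allowed to depend on $x^0$, hence on $\epsilon$. The guiding idea: at outer iteration $t$, design the inner loop so that it consumes only \emph{half} of the current suboptimality, driving $f(x^t)$ to $\epsilon$ rather than to $0$, while the $n$ chosen subspaces still span $T_{y^{t,0}}M = \bbR^n$.

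Concretely, suppose $f(x^{t-1}) > \epsilon$ (true for $t=1$). For inner steps $k=1,\dots,n-1$, let $P_k^{y^{t,k-1}}$ be the rank-one orthogonal projection onto $\Span(v_{t,k})$, where $v_{t,1},\dots,v_{t,n-1}$ is an orthonormal basis of the hyperplane $\{x^{t-1}\}^\perp$. Since $\grad f(x)=x$ and each $v_{t,k}\perp x^{t-1}$, these steps are \emph{inert}, i.e.\ $y^{t,k}=y^{t,k-1}=x^{t-1}$; this is precisely what makes the $n-1$ orthogonality constraints mutually consistent. For step $k=n$, choose a unit vector $v_{t,n}$ with $\la v_{t,n}, x^{t-1}\ra = \sqrt{f(x^{t-1})-\epsilon}$ (possible since $0 \le f(x^{t-1})-\epsilon \le \|x^{t-1}\|_2^2$) and project onto $\Span(v_{t,n})$; with stepsize $1$ this yields $x^t = x^{t-1} - \la v_{t,n},x^{t-1}\ra v_{t,n}$ and hence $f(x^t) = f(x^{t-1}) - \tfrac12 (f(x^{t-1})-\epsilon) = \tfrac{f(x^{t-1})+\epsilon}{2}$. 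I would then check this is a legitimate selection rule (it only uses $y^{t,0}$ and the index $k$) and verify \eqref{eqn:spanning-condition}: the $v_{t,k}$ with $k\le n-1$ span $\{x^{t-1}\}^\perp$ while $v_{t,n}$ has component $\sqrt{\epsilon 2^{-t}}>0$ along $x^{t-1}$, so jointly they span $\bbR^n$; and since parallel transport is the identity on $\bbR^n$, $P_{t,k}$ equals the projection onto $\Span(v_{t,k})$.

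Solving $f(x^t)=(f(x^{t-1})+\epsilon)/2$ with $f(x^0)=2\epsilon$ gives $f(x^t)=\epsilon(1+2^{-t})>\epsilon$, which is the first claim (and closes the induction used above). For the second claim, as in the randomized proof, $x^t = x^0 - \sqrt{\epsilon}\sum_{s\le t} 2^{-s/2} v_{s,n}$ converges to some $\bar x\in\bbR^n$, and $\bar x \neq 0$ because $f$ is continuous so $f(\bar x) = \lim_t f(x^t) = \epsilon > 0$. Using $\|\cdot\|_{t,\cP}^2 = \sum_{k\in[n]} \la v_{t,k},\cdot\ra^2$ together with $\la v_{t,k},x^{t-1}\ra = 0$ for $k\le n-1$ and $\la v_{t,n},x^{t-1}\ra = \sqrt{\epsilon 2^{-t}}$, each $|\la v_{t,k},\bar x\ra|$ is bounded by $\|\bar x - x^{t-1}\|_2$ (plus $\sqrt{\epsilon 2^{-t}}$ when $k=n$), so $\|\bar x\|_{t,\cP}\to 0$ while $\|\bar x\|_2$ is a fixed positive constant; hence $\sup_{v\neq 0}\|v\|_2/\|v\|_{t,\cP} \ge \|\bar x\|_2/\|\bar x\|_{t,\cP}\to\infty$. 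The main obstacle is engineering the inert/active split so that the inner loop both pins the iterate at $x^{t-1}$ for $n-1$ steps (keeping the orthogonality constraints consistent and preserving the span) and still produces exactly the geometric half-way decrease via the single active step; once that is set up correctly, the remaining estimates are routine and parallel those already carried out for Theorem~\ref{thm:projection-failure-randomized}.
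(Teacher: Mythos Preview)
Your construction is correct and genuinely different from the paper's. The paper makes all $n$ inner steps \emph{active}: at step $k$ it picks a unit vector $v_{t,k}$ on the affine hyperplane $\{v:\la v,y^{t,k-1}\ra=\sqrt{(f(x^{t-1})-\epsilon)/m}\}$ that also increases the affine dimension of $\{v_{t,1},\dots,v_{t,k}\}$, so each step shaves off exactly $\tfrac{1}{2m}(f(x^{t-1})-\epsilon)$ and the span condition is maintained via a running dimension-counting argument. You instead make the first $n-1$ steps \emph{inert} by projecting onto an orthonormal basis of $\{x^{t-1}\}^\perp$ (so the iterate stays put and the span of $\{x^{t-1}\}^\perp$ is automatic), and concentrate the entire half-way decrease into a single active step $k=n$. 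Both routes yield the identical recurrence $f(x^t)=(f(x^{t-1})+\epsilon)/2$ and the same limit argument for $\bar x$. Your approach buys a cleaner verification of \eqref{eqn:spanning-condition} (no per-step dimension counting, since the inert directions are chosen orthonormal from the outset) and avoids tracking the moving base points $y^{t,k-1}$; the paper's approach, by contrast, illustrates that non-convergence can occur even when \emph{every} inner step strictly decreases $f$, which is arguably a slightly stronger pathology. One minor arithmetic slip: $\la v_{t,n},x^{t-1}\ra=\sqrt{f(x^{t-1})-\epsilon}=\sqrt{\epsilon\,2^{1-t}}$, not $\sqrt{\epsilon\,2^{-t}}$ (the paper makes the same slip); this does not affect any conclusion.
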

\begin{proof}[Proof of Theorem \ref{thm:projection-failure}]
We define the following projection selection rule. Given a starting point $x^0 \neq 0$, denote $\epsilon := f(x^0)/2 > 0$. Fix $t \geq 1$, and set $y^{t,0} = x^{t-1}$, $S_{t,0} = \{0\}$. For $k \in [m]$, we do the following:
\begin{itemize}
	\item Let $H_{t,k} := \{ v : \la v,y^{t,k-1} \ra = \sqrt{(f(x^{t-1})-\epsilon)/m} \}$.
	\item Choose $v_{t,k} \in H_{t,k} \cap \{v : \|v\|_2=1\}$ such that $\dim(S_{t,k-1} \cup \{v_{t,k}\}) = \dim(S_{t,k-1})+1$, where $\dim(S)$ is the affine dimension of $S$ (this is possible due to a simple dimension counting argument).
	\item Set $S_{t,k} := S_{t,k-1} \cup \{v_{t,k}\}$.
	\item Define $\cP(y^{t,k-1},k)$ as the projection onto the subspace spanned by $v_{t,k}$.
\end{itemize}
Set $x^t = y^{t,n}$. Also observe that $\dim\left( \{v_{t,k}\}_{k \in [m]} \right) = m=n$ so $\Span\left( \{v_{t,k}\}_{k \in [m]} \right) = \bbR^n$. Then for each $k \in [m]$ we have
\begin{align*}
f(y^{t,k}) = f(y^{t,k-1}) - \frac{1}{2} \la v_{t,k}, y^{t,k-1} \ra^2=\hdots= f(y^{t,0}) - \frac{1}{2} \sum_{j \in [k]} \la v_{t,j}, y^{t,j-1} \ra^2&= f(x^{t-1}) - \frac{k}{2m} (f(x^{t-1}) - \epsilon)\\
&= \frac{(2m-k) f(x^{t-1}) + k \epsilon}{2m}.
\end{align*}
In particular, $f(x^t) = (f(x^{t-1})+\epsilon)/2$, so as in the proof of Theorem \ref{thm:projection-failure-randomized}, $f(x^t) = \epsilon(1+2^{-t}) > \epsilon$. Similar arguments to the proof of Theorem \ref{thm:projection-failure-randomized} show that $x^t \to \bar{x}$, $\la v_{t,k}, \bar{x} \ra \to 0$ for any $k \in [m]$, and $\frac{\|\bar{x}\|_2}{\|\bar{v}\|_{t,\cP} } \to \infty$ (recognize that $\|\bar{x}\|_{t,\cP} = \sqrt{\sum_{k \in [m]} \la v_{t,k}, \bar{x} \ra^2}$).
\Halmos
\end{proof}

The primary issue that prevents convergence in Theorems \ref{thm:projection-failure-randomized} and \ref{thm:projection-failure} is that, while the $\{v_{t,k}\}_{k \in [m]}$ span $\R^n$, they eventually cluster near a lower-dimensional subspace $\{v : \la v, \bar{x} \ra = 0\}$ as the algorithm progresses. As a result, the projections cause some information loss in the direction $\bar{x}$.

It seems reasonable to conjecture that if the projections maintain a certain level of `orthogonality' (i.e., do not cluster near some lower-dimensional subspace), then the degeneracy of Theorems \ref{thm:projection-failure-randomized} and \ref{thm:projection-failure} cannot occur.  Constructing viable deterministic and randomized subspace selections rules that respect this observation is the thrust of the main two sections (Sections \ref{sec:deterministic} and \ref{sec:randomized}) of the paper. In the respective cases, the orthogonality of the selected subspaces will be reflected by the quantities $\sup_{v\in T_{y^{t,0}}M\setminus\{0\}}\frac{\|v\|_{y^{t,0}}}{\|v\|_{y^{t,0},\cP}}$ and $\sup_{v\in T_{x^{t-1}}M\setminus\{0\}}\frac{\|v\|_{x^{t-1}}}{\sqrt{\bbE\left[ \|P_t v\|_{x^{t-1}}^2 \mid x^{t-1} \right]}}$. As we will see, providing finite bounds on these quantities is sufficient to guarantee convergence of TSD, so the main challenge will be to provide conditions to guarantee this.

\section{Deterministic Subspace Selection}\label{sec:deterministic}

In this section, we analyze the convergence of TSD when it is equipped with a deterministic subspace selection rule $\cP$.
In Section \ref{sec:gap}, we motivate and present a novel and mild condition on $\cP$, the $(\gamma,r)$-gap ensuring condition, which guarantees convergence of tangent subspace descent for smooth functions. We then give a convergence analysis for $(\gamma,r)$-gap ensuring rules. In Section \ref{sec:deterministic-examples}, we present a series of important examples of selection rules $\cP$ that satisfy this condition including product manifolds, which includes block cyclic coordinate descent in Euclidean space as a special case, as well as an important non-Euclidean application, tensor PCA. Note that we also construct a practical $(\gamma,r)$-gap ensuring $\cP$ for the manifold of orthogonal square matrices, but we defer its construction to Section \ref{sec:orthogonal}.

\subsection{A Convergent Deterministic \texorpdfstring{$\cP$}{Lg} via the \texorpdfstring{$(\gamma,r)$}{Lg}-Gap Ensuring Condition}\label{sec:gap}

Given a Hilbert space $V$, we will say that a set of orthogonal projections $\{P_k'\}_{k \in [m]}$ is an \emph{orthogonal decomposition} of $V$ if $\Image(P'_i)\perp \Image(P_j)$ for $j\neq i$ and $\Span\left(\bigcup_{k \in [m]} \Image(P_k') \right) = V$. The foundation of our analysis is Assumption \ref{ass:gap} below, the $(\gamma,r)$-gap ensuring condition. We now describe why the norm equivalence terms from Theorems \ref{thm:projection-failure-randomized} and \ref{thm:projection-failure} arise in our analysis, as well as why Assumption \ref{ass:gap} is required for bounding them.

First, recall that we assume that our objective function $f$ is $L_f$-smooth, in the sense of \eqref{eqn:smooth-ub}. Similar to \eqref{eqn:smooth-ub}, smoothness will facilitate stepsize selection in order to achieve sufficient decrease as in \eqref{eqn:sufficient-decrease}. We refine the definition of smoothness slightly to individual subspaces $P_k^{y^{t,k-1}} T_{y^{t,k-1}} M$ in an inner loop.

\begin{assumption}[Generalized Block Smoothness]\label{ass:block-smooth}
	There exists constants $0 < L_1,\ldots,$ $L_m < \infty$ such that for any $t \geq 1$ and $k \in [m]$, the iterates of Algorithm \ref{alg:tangent-subspace-descent} satisfy
	\begin{equation}\label{eqn:smooth-block-generalized}
	f\left(\Exp_{y^{t,k-1}}(P_k^{y^{t,k-1}} v) \right) \leq f(y^{t,k-1}) + \ip{\grad f(y^{t,k-1})}{P_k^{y^{t,k-1}} v}_{y^{t,k-1}} + \frac{L_k}{2} \left\|P_k^{y^{t,k-1}} v\right\|_{y^{t,k-1}}^2
	\end{equation}
	for all $v\in T_{y^{t,k-1}}M$.
\end{assumption}
In practice, each $L_k$ may not be known, but can be implemented via backtracking line search (to achieve the condition \eqref{eq:deterministic-decrease-block}), which will terminate as long as $L_k$ is finite; this will hold since one can simply take $L_k = L_f$. From Assumption \ref{ass:block-smooth} and another mild condition, we obtain the following.

\begin{lemma}[Deterministic Sufficient Decrease Template]\label{lemma:sufficient-decrease-deterministic}
	Suppose Assumption \ref{ass:block-smooth} holds, and that at each $t \geq 1$, $k \in [m]$ in Algorithm \ref{alg:tangent-subspace-descent}, the stepsize is chosen as $\eta_{t,k} := 1/L_k$. Then
	\begin{equation}\label{eq:deterministic-decrease1}
	f(y^{t,0}) - f(y^{t,m}) \geq \sum_{k \in [m]} \frac{1}{2 L_k} \left\|P_k^{y^{t,k-1}} \grad f(y^{t,k-1}) \right\|_{y^{t,k-1}}^2. 
	\end{equation}
	Furthermore, suppose that there exists some $C>0$ such that for each $k \in [m]$, $\CP$ ensures the iterates of Algorithm \ref{alg:tangent-subspace-descent} satisfy
	\begin{equation}\label{eq:deterministic-decrease2}
	\left\|P_k \grad f(y^{t,0}) - P_k \Gamma^{y^{t,0}}_{y^{t,k-1}} \grad f(y^{t,k-1}) \right\|_{y^{t,0}}^2 \leq C \sum_{j \in [k]} \left\|P_j^{y^{t,j-1}} \grad f(y^{t,j-1})\right\|_{y^{t,j-1}}^2.
	\end{equation}
	Then for any $t \geq 1$,
	\begin{equation}\label{eq:deterministic-decrease}
	f(y^{t,0}) - f(y^{t,m}) \geq \frac{1}{4L_{\max}(1+Cm)}\left\|\grad f(y^{t,0})\right\|_{y^{t,0},\cP}^2.
	\end{equation}
\end{lemma}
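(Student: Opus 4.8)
The plan is to first prove the per-block telescoping estimate \eqref{eq:deterministic-decrease1}, and then obtain \eqref{eq:deterministic-decrease} from it by transporting every block-gradient norm to the single tangent space $T_{y^{t,0}}M$ and combining a triangle inequality with the gap-type bound \eqref{eq:deterministic-decrease2}. First, fix $t\ge 1$ and $k\in[m]$. By the update rule of Algorithm \ref{alg:tangent-subspace-descent}, $y^{t,k}=\Exp_{y^{t,k-1}}\!\big(P_k^{y^{t,k-1}}v\big)$ for $v=-\eta_{t,k}\grad f(y^{t,k-1})=-\tfrac{1}{L_k}\grad f(y^{t,k-1})$, so Assumption \ref{ass:block-smooth}, i.e.\ \eqref{eqn:smooth-block-generalized}, applies to this $v$. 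Because $P_k^{y^{t,k-1}}$ is an orthogonal projection, $\ip{\grad f(y^{t,k-1})}{P_k^{y^{t,k-1}}\grad f(y^{t,k-1})}_{y^{t,k-1}}=\big\|P_k^{y^{t,k-1}}\grad f(y^{t,k-1})\big\|_{y^{t,k-1}}^2$; inserting this together with $\eta_{t,k}=1/L_k$ into \eqref{eqn:smooth-block-generalized} collapses the right-hand side to give $f(y^{t,k-1})-f(y^{t,k})\ge\tfrac{1}{2L_k}\big\|P_k^{y^{t,k-1}}\grad f(y^{t,k-1})\big\|_{y^{t,k-1}}^2$, and summing over $k\in[m]$ telescopes to \eqref{eq:deterministic-decrease1}.

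For \eqref{eq:deterministic-decrease}, write $g:=\grad f(y^{t,0})$ and $h_k:=\Gamma_{y^{t,k-1}}^{y^{t,0}}\grad f(y^{t,k-1})\in T_{y^{t,0}}M$, so that $h_1=g$; recall $P_{t,k}=\Gamma_{y^{t,k-1}}^{y^{t,0}}P_k^{y^{t,k-1}}\Gamma_{y^{t,0}}^{y^{t,k-1}}$ and that $P_k$ in \eqref{eq:deterministic-decrease2} abbreviates $P_{t,k}$. Since $\Gamma_{y^{t,0}}^{y^{t,k-1}}h_k=\grad f(y^{t,k-1})$ and parallel transport is a linear isometry, $\big\|P_{t,k}h_k\big\|_{y^{t,0}}=\big\|P_k^{y^{t,k-1}}\grad f(y^{t,k-1})\big\|_{y^{t,k-1}}$. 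The triangle inequality in $\|\cdot\|_{y^{t,0}}$ then gives $\|P_{t,k}g\|_{y^{t,0}}^2\le 2\|P_{t,k}h_k\|_{y^{t,0}}^2+2\|P_{t,k}(g-h_k)\|_{y^{t,0}}^2$, and the last term is exactly the left-hand side of \eqref{eq:deterministic-decrease2}, hence at most $C\sum_{j\in[k]}\big\|P_j^{y^{t,j-1}}\grad f(y^{t,j-1})\big\|_{y^{t,j-1}}^2=C\sum_{j\in[k]}\|P_{t,j}h_j\|_{y^{t,0}}^2\le C\sum_{j\in[m]}\|P_{t,j}h_j\|_{y^{t,0}}^2$. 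Summing over $k\in[m]$ and recalling $\|g\|_{y^{t,0},\cP}^2=\sum_{k\in[m]}\|P_{t,k}g\|_{y^{t,0}}^2$ yields $\|g\|_{y^{t,0},\cP}^2\le 2(1+Cm)\sum_{k\in[m]}\big\|P_k^{y^{t,k-1}}\grad f(y^{t,k-1})\big\|_{y^{t,k-1}}^2$. Finally, bounding $1/(2L_k)\ge 1/(2L_{\max})$ in \eqref{eq:deterministic-decrease1} and chaining the two inequalities produces \eqref{eq:deterministic-decrease}.

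The calculations are all elementary; the one point that needs care is the parallel-transport bookkeeping in the second step --- keeping straight that $P_k^{y^{t,k-1}}$ acts on $T_{y^{t,k-1}}M$ whereas $P_{t,k}$ acts on $T_{y^{t,0}}M$, and using the conjugation identity $P_{t,k}=\Gamma_{y^{t,k-1}}^{y^{t,0}}P_k^{y^{t,k-1}}\Gamma_{y^{t,0}}^{y^{t,k-1}}$ together with the metric-preserving property of $\Gamma$ so that each block term and the error term in \eqref{eq:deterministic-decrease2} can be read off at the common base point $y^{t,0}$ and thereby reassembled into the seminorm $\|\cdot\|_{y^{t,0},\cP}$. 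This reorganization is the manifold-specific modification of the standard BCD argument flagged in Remark \ref{rem:gap-ensuring-distance}.
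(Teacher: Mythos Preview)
Your proof is correct and follows essentially the same approach as the paper: establish the per-block decrease \eqref{eq:deterministic-decrease1} from Assumption~\ref{ass:block-smooth} with the stepsize $1/L_k$, then split $\|P_{t,k}\grad f(y^{t,0})\|_{y^{t,0}}^2$ via $(a+b)^2\le 2a^2+2b^2$, identify the cross term with the hypothesis \eqref{eq:deterministic-decrease2}, and sum. The only cosmetic difference is that you relax $\sum_{j\in[k]}$ to $\sum_{j\in[m]}$ before summing over $k$, whereas the paper sums first and then bounds the resulting coefficients; both routes yield the same factor $2(1+Cm)$. One minor quibble: your closing sentence citing Remark~\ref{rem:gap-ensuring-distance} is slightly off target, since that remark concerns the distance parameter $r$ in the gap-ensuring condition rather than the parallel-transport bookkeeping in this lemma.
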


\begin{proof}[Proof of Lemma \ref{lemma:sufficient-decrease-deterministic}]
For convenience, we fix a $t \geq 1$ and suppress it in the notation for the proof. Our stepsize selection, $\eta_{t,k}=\frac{1}{L_k}$, minimizes the right-hand side of \eqref{eqn:smooth-block-generalized}, which gives
\begin{equation}\label{eq:deterministic-decrease-block}
f(y^{k-1}) - f(y^k) \geq \frac{1}{2 L_k} \left\|P_k^{y^{k-1}} \grad f(y^{k-1})\right\|_{y^{k-1}}^2.
\end{equation}
We get \eqref{eq:deterministic-decrease1} by summing this over $k \in [m]$.
	
	For every $k \in [m]$,
	\begin{align*}
	&\left\| P_k \grad f(y^0)\right\|_{y^0}^2= \left\| P_k \grad f(y^0) - P_k \Gamma^{y^0}_{y^{k-1}} \grad f(y^{k-1}) + P_k \Gamma^{y^0}_{y^{k-1}} \grad f(y^{k-1})\right\|_{y^0}^2\\
	&\leq \left(\left\|P_k \grad f(y^0)-P_k \Gamma^{y^0}_{y^{k-1}} \grad f(y^{k-1}) \right\|_{y^0} + \left\| P_k \Gamma^{y^0}_{y^{k-1}} \grad f(y^{k-1}) \right\|_{y^0}\right)^2\\
	&\leq 2\left\|P_k \grad f(y^0)- P_k \Gamma^{y^0}_{y^{k-1}} \grad f(y^{k-1}) \right\|_{y^0}^2+2\|P_k \Gamma^{y^0}_{y^{k-1}} \grad f(y^{k-1}) \|_{y^0}^2\\
	&= 2\left\|P_k \grad f(y^0)-P_k \Gamma^{y^0}_{y^{k-1}} \grad f(y^{k-1}) \right\|_{y^0}^2+2\|\Gamma^{y^0}_{y^{k-1}} P^{y^{k-1}}_k \grad f(y^{k-1}) \|_{y^0}^2\\
	&= 2\left\|P_k \grad f(x)-P_k \Gamma^{y^0}_{y^{k-1}} \grad f(y^{k-1}) \right\|_{y^0}^2 + 2\| P^{y^{k-1}}_k \grad f(y^{k-1}) \|_{y^{k-1}}^2\\
	&\leq 2\left\| P^{y^{k-1}}_k \grad f(y^{k-1})\right\|_{y^{k-1}}^2 + 2 C \sum_{j \in [k]} \|P^{y^{j-1}}_j \grad f(y^{j-1})\|_{y^{j-1}}^2.
	\end{align*}
	where we apply the triangle inequality in line 2, the Cauchy-Schwarz inequality in line 3, the definition of $P_i$ in the rightmost term of line 4, and the fact that $\Gamma^{y^0}_{y^{k-1}}$ is an isometry in line 5.
	
	Summing this over $k \in [m]$ we have
	\begin{align*}
	\|\grad f(y^0)\|_{y^0,\cP}^2 = \sum_{k \in [m]} \left\|P_k \grad f(y^0)\right\|_{y^0}^2&\leq 2\sum_{k \in [m]} \left(1+(m-k)C\right)\left\|P^{y^{k-1}}_k \grad f(y^{k-1})\right\|_{y^{k-1}}^2\\
	&\leq 2\left(1+Cm\right)\sum_{k \in [m]} \left\|P^{y^{k-1}}_k \grad f(y^{k-1})\right\|_{y^{k-1}}^2.
	\end{align*}
	We then get \eqref{eq:deterministic-decrease} by simple rearrangement.\Halmos
\end{proof}

We will address the condition \eqref{eq:deterministic-decrease2} later. For now, we examine the result \eqref{eq:deterministic-decrease}. Since $x^{t-1} = y^{t,0}$, $x^t = y^{t,m}$, this is almost of the same form as \eqref{eqn:sufficient-decrease}, except the norm on the right hand side is $\|\cdot\|_{y^{t,0},\CP}$ instead of the required $\|\cdot\|_{y^{t,0}} = \|\cdot\|_{x^{t-1}}$. This also clarifies why the projection selection rule of Theorem \ref{thm:projection-failure} forces non-convergence of tangent subspace descent. In order to achieve the sufficient decrease condition \eqref{eqn:sufficient-decrease}, we need to replace $\|\nabla f(y^{t,0})\|_{y^{t,0},\cP}$ with $\|\nabla f(y^{t,0})\|_{y^{t,0}}$ which can be done if there exists a uniform (over $t \geq 1$) finite bound on the norm-equivalence term $\sup_{v \in T_{y^{t,0}} M \setminus \{0\}} \|v\|_{y^{t,0}}/\|v\|_{y^{t,0},\cP}$. For the example in Theorem \ref{thm:projection-failure}, it was shown that this quantity diverges to $\infty$ as $t \to \infty$.

As we will see in Example \ref{ex:parallel-transport-rule}, when $\CP$ is selected in a particular manner, this can be guaranteed. However, unless the manifold $M$ has special structure (e.g., product manifold, which includes Euclidean space) using this particular $\CP$ can be costly since it requires repeated computations of parallel transports. In order to circumvent this, we now introduce the $(\gamma,r)$-gap ensuring condition.
\begin{assumption}\label{ass:gap}
Let $r>0$ and $\gamma\in(0,1]$. Suppose that for any $t \geq 1$ and $\{y^{t,0},\ldots,y^{t,m}\}$ $\subset M$ generated by Algorithm \ref{alg:tangent-subspace-descent}, we have that whenever
\[ 
\max_{k \in [m]} d(y^{t,0},y^{t,k}) \leq r, 
\]
there exists an orthogonal decomposition $\{P_{t,k}'\}_{k \in [m]}$ of $T_{y^{t,0}} M$ such that for all $k \in [m]$, $P_{t,k}$ and $P_{t,k}'$ have the same rank and $\|P_{t,k}' - P_{t,k}\|_{y^{t,0}} \leq \gamma$ (the induced operator norm on $T_{y^{t,0}} M$). We say such a projection selection rule $\cP$ is \emph{$(\gamma,r)$-gap ensuring}.
\end{assumption}

\begin{remark}\label{rem:gap-ensuring-distance}
As we will prove in Proposition \ref{prop:norm-equivalence}, we can bound the norm-equivalence term if $\max_{k \in [m]} d(y^{t,0},y^{t,k}) \leq r$. One may ask, however, whether the distance parameter $r$ is necessary for our analysis. As it turns out, Section \ref{sec:orthogonal-distance-example} shows that on the manifold of orthogonal matrices $O_n$, there exists a $\cP$ such that Assumption \ref{ass:gap} is satisfied, but the norm-equivalence term is unbounded in general. Thus, the parameter $r$ is critical for bounding the norm equivalence term.

Note that in Euclidean space, $r$ is not required. Furthermore, it is known that small neighbourhoods around any point in $M$ behave like Euclidean space.  Thus, an intuitive reason why the norm-equivalence term is well-behaved when $\max_{k \in [m]} d(y^{t,0},y^{t,k})\leq r$ is because the neighbourhood around $y^{t,0}$ becomes more like usual Euclidean space.

In terms of the analysis, when the distance condition is violated, in Lemma \ref{lemma:deterministic-small-large-step} we will show that another sufficient decrease inequality holds, similar to \eqref{eqn:sufficient-decrease}. Thus, we still make progress for each step. This is the main departure from the traditional analysis of BCD in Euclidean space.
\epr
\end{remark}

\begin{remark}\label{rem:principle-angles}
\citet[p.~331]{Golub13} explain that the quantity $\|P_{t,k}' - P_{t,k}\|_{y^{t,0}}$ is equal to $\sqrt{1-\cos^2(\theta_{\min})}$ where $\theta_{\min}$ denotes the smallest principal angle between the subspaces $\Image(P_{t,k}' )$ and $\Image(P_{t,k})$. The smallest principal angle between tangent subspaces has also been used by \citet{Lewis08} to describe the convergence rates of alternating projection methods on manifolds.
\epr
\end{remark}

We now show how Assumption \ref{ass:gap} leads to bounded norm-equivalence terms.
\begin{proposition}\label{prop:norm-equivalence}
If $\cP$ is $(\gamma,r)$-gap ensuring then at any iteration $t \geq 1$ of Algorithm \ref{alg:tangent-subspace-descent}, it holds that
\[
\left( 1 - \sqrt{m} \gamma \right) \cdot\|\cdot\|_{y^{t,0}} \leq\|\cdot\|_{y^{t,0},\CP} \leq \left( 1 + \sqrt{m} \gamma \right) \cdot \|\cdot\|_{y^{t,0}}.
\]
\end{proposition}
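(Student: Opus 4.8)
The plan is to directly estimate the difference between $\|\cdot\|_{y^{t,0},\CP}$ and $\|\cdot\|_{y^{t,0}}$ using the orthogonal decomposition $\{P'_{t,k}\}_{k\in[m]}$ promised by Assumption~\ref{ass:gap} as an anchor. First I would observe that since $\{P'_{t,k}\}_{k\in[m]}$ is an orthogonal decomposition of $T_{y^{t,0}}M$, we have the Pythagorean identity $\|v\|_{y^{t,0}}^2 = \sum_{k\in[m]}\|P'_{t,k}v\|_{y^{t,0}}^2$, so that $\|v\|_{y^{t,0}} = \sqrt{\sum_{k\in[m]}\|P'_{t,k}v\|_{y^{t,0}}^2}$. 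This means that if we define an auxiliary (semi)norm-like quantity via $\|v\|' := \sqrt{\sum_{k\in[m]}\|P_{t,k}v\|_{y^{t,0}}^2} = \|v\|_{y^{t,0},\CP}$, then comparing $\|\cdot\|_{y^{t,0},\CP}$ with $\|\cdot\|_{y^{t,0}}$ amounts to comparing the vector $\big(\|P_{t,k}v\|_{y^{t,0}}\big)_{k\in[m]}$ with $\big(\|P'_{t,k}v\|_{y^{t,0}}\big)_{k\in[m]}$ in the Euclidean $\ell^2$ norm over $k$.

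The key step is then a triangle-inequality argument in $\ell^2(\{1,\dots,m\})$. Writing $a_k := \|P_{t,k}v\|_{y^{t,0}}$ and $b_k := \|P'_{t,k}v\|_{y^{t,0}}$, the reverse triangle inequality on $\R^m$ gives
\[
\bigl|\,\|\cdot\|_{y^{t,0},\CP} - \|\cdot\|_{y^{t,0}}\,\bigr|
= \bigl|\,\|(a_k)_k\|_2 - \|(b_k)_k\|_2\,\bigr|
\leq \|(a_k - b_k)_k\|_2
= \sqrt{\sum_{k\in[m]} \bigl(\|P_{t,k}v\|_{y^{t,0}} - \|P'_{t,k}v\|_{y^{t,0}}\bigr)^2}.
\]
Now for each fixed $k$, another reverse triangle inequality (this time in $T_{y^{t,0}}M$) yields $\bigl|\,\|P_{t,k}v\|_{y^{t,0}} - \|P'_{t,k}v\|_{y^{t,0}}\,\bigr| \le \|(P_{t,k} - P'_{t,k})v\|_{y^{t,0}} \le \|P_{t,k} - P'_{t,k}\|_{y^{t,0}}\,\|v\|_{y^{t,0}} \le \gamma\|v\|_{y^{t,0}}$, where the last step uses Assumption~\ref{ass:gap}. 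Here I must note the hypothesis $\max_{k\in[m]} d(y^{t,0},y^{t,k}) \le r$ is what licenses invoking Assumption~\ref{ass:gap}; I would state this as the standing assumption of the proposition (it is implicit in the phrasing "at any iteration $t\geq 1$", but should be made explicit, matching Remark~\ref{rem:gap-ensuring-distance}). Substituting into the sum over $k$ gives $\sqrt{\sum_{k\in[m]} \gamma^2\|v\|_{y^{t,0}}^2} = \sqrt{m}\,\gamma\,\|v\|_{y^{t,0}}$.

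Combining, $\bigl|\,\|v\|_{y^{t,0},\CP} - \|v\|_{y^{t,0}}\,\bigr| \le \sqrt{m}\,\gamma\,\|v\|_{y^{t,0}}$, which rearranges to the claimed two-sided bound $(1 - \sqrt{m}\gamma)\|v\|_{y^{t,0}} \le \|v\|_{y^{t,0},\CP} \le (1+\sqrt{m}\gamma)\|v\|_{y^{t,0}}$ for all $v \in T_{y^{t,0}}M$. I do not expect any serious obstacle here — the argument is two nested applications of the reverse triangle inequality plus the Pythagorean identity for the orthogonal decomposition. The one subtlety worth a sentence is that the lower bound is only informative (nontrivial) when $\sqrt{m}\gamma < 1$; when $\sqrt{m}\gamma \ge 1$ it still holds vacuously since $\|\cdot\|_{y^{t,0},\CP} \ge 0$. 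I would also remark that the equal-rank condition in Assumption~\ref{ass:gap} is not strictly needed for this particular estimate but is retained because it is used elsewhere; alternatively one can simply not mention it here.
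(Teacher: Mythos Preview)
Your proof is correct and follows essentially the same approach as the paper: both use the Pythagorean identity for the orthogonal decomposition $\{P'_{t,k}\}$, the operator-norm bound $\|P_{t,k}-P'_{t,k}\|\le\gamma$, and a triangle-inequality argument summed over $k$. The paper expands the squares explicitly and completes the square at the end, whereas you package the same computation more compactly via the reverse triangle inequality in $\ell^2(\{1,\dots,m\})$; your remark that the distance hypothesis $\max_k d(y^{t,0},y^{t,k})\le r$ is implicitly assumed is also accurate.
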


\begin{proof}[Proof of Proposition \ref{prop:norm-equivalence}]
We suppress the dependence on $t$ in the notation throughout this proof. Observe that
\begin{align*}
\|v\|_{y^0}^2 = \sum_{k \in [m]} \|P_k'v\|_{y^0}^2 &\leq \sum_{k \in [m]} \left(\|P_k v\|_{y^0}+\|(P_k'-P_k)v\|_{y^0}\right)^2\\
&\leq\sum_{k \in [m]} \left(\|P_k v\|_{y^0}+\gamma\|v\|_{y^0}\right)^2\\
&=\sum_{k \in [m]} \left[\|P_k v\|_{y^0}^2+2\gamma\|v\|_{y^0}\|P_k v\|_{y^0}+\gamma^2\|v\|_{y^0}^2\right]\\
&=\|v\|_{y^0,\cP}^2+\sum_{k \in [m]} \left[2\gamma\|v\|_{y^0} \|P_k v\|_{y^0} + \gamma^2 \|v\|_{y^0}^2\right]\\
&\leq \|v\|_{y^0,\cP}^2+ 2\sqrt{m}\gamma\|v\|_{y^0} \|v\|_{y^0,\cP} + m \gamma^2 \|v\|_{y^0}^2\\
&= \left(\|v\|_{y^0,\cP} + \sqrt{m} \gamma \|v\|_{y^0} \right)^2\\
 \implies (1-\sqrt{m} \gamma) \|v\|_{y^0} &\leq \|v\|_{y^0,\cP}.
\end{align*}
A similar computation applied to the inequality
\[
\|v\|_{y^0,\cP}^2=\sum_{k \in [m]} \|P_k v\|_{y^0}^2 \leq \sum_{k \in [m]} \left(\|P_k' v\|_{y^0} + \|(P_k-P_k')v\|_{y^0} \right)^2
\]
shows that
\[
\|v\|_{y^0,\cP} \leq (1+\sqrt{m} \gamma) \|v\|_{y^0}
\]
which completes our chain of inequalities.\Halmos
\end{proof}

We are now in a position to complete our analysis. We first verify that \eqref{eq:deterministic-decrease2} holds for any projection selection rule $\CP$ with $C = L_f^2(m-1)/L_{\min}^2$.

\begin{lemma}\label{lemma:deterministic-chaining-bound}
	Suppose $f$ is $L_f$-smooth and Assumption \ref{ass:block-smooth} holds. Then for any $t \geq 1$, the iterates of Algorithm \ref{alg:tangent-subspace-descent}
\[
	\left\|P_{t,k} \grad f(y^{t,0}) - P_{t,k} \Gamma^{y^{t,0}}_{y^{t,k-1}} \grad f(y^{t,k-1}) \right\|_{y^{t,0}}^2 \leq \frac{L_f^2 (m-1)}{L_{\min}^2} \sum_{j \in [k-1]} \|P_{t,j}^{y^{t,j-1}} \grad f(y^{t,j-1}) \|_{y^{t,j-1}}^2.
\]
\end{lemma}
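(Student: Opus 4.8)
The plan is a ``chaining'' argument along the inner-loop iterates, using only $L_f$-smoothness \eqref{eqn:smooth}, the triangle inequality for the Riemannian distance, and the fact that orthogonal projections and parallel transports are norm-nonexpansive. First I would dispose of the outer projection: $P_{t,k} = \Gamma^{y^{t,0}}_{y^{t,k-1}} P^{y^{t,k-1}}_k \Gamma^{y^{t,k-1}}_{y^{t,0}}$ is the conjugate of the orthogonal projection $P^{y^{t,k-1}}_k$ by the metric-preserving map $\Gamma^{y^{t,0}}_{y^{t,k-1}}$, hence it is itself an orthogonal projection on $T_{y^{t,0}}M$ and in particular has operator norm at most $1$. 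Therefore
\[
\left\|P_{t,k} \grad f(y^{t,0}) - P_{t,k} \Gamma^{y^{t,0}}_{y^{t,k-1}} \grad f(y^{t,k-1}) \right\|_{y^{t,0}} \le \left\| \grad f(y^{t,0}) - \Gamma^{y^{t,0}}_{y^{t,k-1}} \grad f(y^{t,k-1}) \right\|_{y^{t,0}},
\]
so it suffices to bound the right-hand side (the case $k=1$ is trivial, both sides being $0$).

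Next I would apply $L_f$-smoothness via the identity $\|\Gamma_x^y \grad f(x) - \grad f(y)\|_y = \|\grad f(x) - \Gamma_y^x \grad f(y)\|_x$ recorded after \eqref{eqn:g-convex}: condition \eqref{eqn:smooth} gives $\left\| \grad f(y^{t,0}) - \Gamma^{y^{t,0}}_{y^{t,k-1}} \grad f(y^{t,k-1}) \right\|_{y^{t,0}} \le L_f\, d(y^{t,0}, y^{t,k-1})$. The point is that $L_f$-smoothness already controls this ``long-range'' transport difference directly in terms of the intrinsic distance, which does not depend on the path; so I never need parallel transports to compose across the concatenated geodesic steps. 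Then the triangle inequality for $d$ along $y^{t,0}, y^{t,1}, \ldots, y^{t,k-1}$ gives $d(y^{t,0}, y^{t,k-1}) \le \sum_{j \in [k-1]} d(y^{t,j-1}, y^{t,j})$, and for each segment, since $y^{t,j} = \Exp_{y^{t,j-1}}\left(-\eta_{t,j} P^{y^{t,j-1}}_j \grad f(y^{t,j-1})\right)$ with $\eta_{t,j} = 1/L_j$ (the deterministic stepsize of Lemma \ref{lemma:sufficient-decrease-deterministic}) and $\|v\|_x \ge d(x,\Exp_x(v))$, we get $d(y^{t,j-1}, y^{t,j}) \le \tfrac{1}{L_j}\|P^{y^{t,j-1}}_j \grad f(y^{t,j-1})\|_{y^{t,j-1}} \le \tfrac{1}{L_{\min}}\|P^{y^{t,j-1}}_j \grad f(y^{t,j-1})\|_{y^{t,j-1}}$.

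Chaining the three estimates together gives $\|P_{t,k}\grad f(y^{t,0}) - P_{t,k}\Gamma^{y^{t,0}}_{y^{t,k-1}}\grad f(y^{t,k-1})\|_{y^{t,0}} \le \tfrac{L_f}{L_{\min}} \sum_{j \in [k-1]} \|P^{y^{t,j-1}}_j \grad f(y^{t,j-1})\|_{y^{t,j-1}}$. Squaring both sides and invoking the Cauchy-Schwarz (power-mean) inequality $\left(\sum_{j\in[k-1]} a_j\right)^2 \le (k-1) \sum_{j\in[k-1]} a_j^2 \le (m-1)\sum_{j\in[k-1]} a_j^2$, where we used $k \le m$, produces exactly the claimed bound. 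I do not expect a genuine obstacle here; the content is essentially bookkeeping. The only points needing care are the identification of $P_{t,k}$ as norm-nonexpansive and the correct use of \eqref{eqn:smooth} with the Riemannian distance $d(y^{t,0},y^{t,k-1})$ (rather than the possibly larger norm of any particular tangent vector mapping $y^{t,0}$ to $y^{t,k-1}$), so that the single-step distance bounds sum cleanly.
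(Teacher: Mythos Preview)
Your proposal is correct and follows essentially the same route as the paper's own proof: drop the projection $P_{t,k}$ by non-expansiveness, apply $L_f$-smoothness \eqref{eqn:smooth} to get the $L_f\, d(y^{t,0},y^{t,k-1})$ bound, chain the distance via the triangle inequality and the per-step bound $d(y^{t,j-1},y^{t,j}) \le \tfrac{1}{L_j}\|P_j^{y^{t,j-1}}\grad f(y^{t,j-1})\|_{y^{t,j-1}}$, then square and apply Cauchy--Schwarz with $k-1\le m-1$. The only cosmetic difference is that you justify explicitly why $P_{t,k}$ is an orthogonal projection (via conjugation by the isometry $\Gamma$), whereas the paper simply asserts it.
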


\begin{proof}[Proof of Lemma \ref{lemma:deterministic-chaining-bound}]
	We suppress the dependence of $t$ in the notation throughout the proof. Since $P_k$ is an orthogonal projection,
	\begin{align*}
	\left\|\grad f(y^0)-\Gamma^{y^0}_{y^{k-1}} \grad f(y^{k-1})\right\|_{y^0}^2 &\geq \left\|P_k \grad f(y) - P_k \Gamma^{y^0}_{y^{k-1}} \grad f(y^{k-1}) \right\|_{y^0}^2
	\end{align*}
	Thus, it suffices to examine $\left\|\grad f(y^0)-\Gamma^{y^0}_{y^{k-1}} \grad f(y^{k-1})\right\|_{y^0}^2$ to get the bound. For $k > 1$,
	\[
	\left\|\grad f(y^0)-\Gamma^{y^0}_{y^{k-1}} \grad f(y^{k-1})\right\|_{y^0}^2 \leq L_f d(y^{k-1},y^0)^2
	\]
	by $L_f$-smoothness \eqref{eqn:smooth}. Then
	\[
	d(y^{k-1},y^0)^2\leq  (k-1) \sum_{j \in [k-1]} d(y^{j-1},y^j)^2
	\]
	by the bound $\|\cdot\|_2\geq \sqrt{k-1}\|\cdot\|_1$ where we regard both as norms on $\R^{k-1}$. Thus
	\begin{align*}
	\left\|\grad f(y^0)-\Gamma^{y^0}_{y^{k-1}} \grad f(y^{k-1})\right\|_{y^0}^2& \leq L_f d(y^{k-1},y^0)^2\\
	&\leq L_f^2 (k-1) \sum_{j \in [k-1]} d(y^{j-1},y^j)^2\\
	&\leq L_f^2 (k-1) \sum_{j \in [k-1]} \frac{1}{L_j^2} \|P^{y^{j-1}}_j \grad f(y^{j-1}) \|_{y^{j-1}}^2\\
	&\leq \frac{L_f^2 (k-1)}{L_{\min}^2} \sum_{j \in [k-1]} \|P^{y^{j-1}}_j \grad f(y^{j-1}) \|_{y^{j-1}}^2\\
	\end{align*}
	In the third inequality, we used the fact that $y^j = \Exp_{y^{j-1}}\left( - \frac{1}{L_j} P^{y^{j-1}}_j \grad f(y^{j-1}) \right)$.
\Halmos
\end{proof}

We now present a refined version of \eqref{eqn:sufficient-decrease} in light of Assumption \ref{ass:gap}.
\begin{lemma}\label{lemma:deterministic-small-large-step}
	Suppose that Assumption \ref{ass:block-smooth} holds and that $\cP$ is $(\gamma,r)$-gap ensuring (Assumption \ref{ass:gap}). Let
	\[ \eta := \frac{L_{\min}^2 (1-\sqrt{m} \gamma)^2}{4 L_{\max}\left( L_{\min}^2 + L_f^2(m-1)m \right)}, \quad \eta' := \frac{L_{\min} r^2}{2m}.\]
	Then for any iteration $t \geq 1$ of Algorithm \ref{alg:tangent-subspace-descent},
	\begin{enumerate}
		\item If $\sum_{k \in [m]} d(y^{t,k-1},y^{t,k}) \leq r$, then
		\begin{equation}\label{eq:lemma-small-step}
		f(y^{t,0}) - f(y^{t,m}) \geq \eta \|\grad f(y^{t,0})\|_{y^{t,0}}^2.
		\end{equation}
		\item If $\sum_{k \in [m]} d(y^{t,k-1},y^{t,k}) > r$, then
		\begin{equation}\label{eq:lemma-large-step}
		f(y^{t,0}) - f(y^{t,m}) \geq \eta'.
		\end{equation}
	\end{enumerate}
\end{lemma}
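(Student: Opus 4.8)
The plan is to split into the two cases according to whether $\sum_{k\in[m]} d(y^{t,k-1},y^{t,k})$ is at most $r$, and in each case chain together the decrease inequality \eqref{eq:deterministic-decrease} of Lemma~\ref{lemma:sufficient-decrease-deterministic} (whose hypothesis \eqref{eq:deterministic-decrease2} is verified by Lemma~\ref{lemma:deterministic-chaining-bound} with $C = L_f^2(m-1)/L_{\min}^2$) with an appropriate lower bound on the quantity $\|\grad f(y^{t,0})\|_{y^{t,0},\cP}^2$. As usual I fix $t$ and suppress it. First I would record the two consequences of the standing assumptions that hold \emph{in both cases}: from Lemmas~\ref{lemma:sufficient-decrease-deterministic} and \ref{lemma:deterministic-chaining-bound},
\begin{equation*}
f(y^{t,0}) - f(y^{t,m}) \geq \frac{1}{4 L_{\max}\left(1 + \frac{L_f^2(m-1)m}{L_{\min}^2}\right)} \|\grad f(y^{t,0})\|_{y^{t,0},\cP}^2 = \frac{L_{\min}^2}{4 L_{\max}(L_{\min}^2 + L_f^2(m-1)m)}\|\grad f(y^{t,0})\|_{y^{t,0},\cP}^2,
\end{equation*}
and from Lemma~\ref{lemma:sufficient-decrease-deterministic} in the form \eqref{eq:deterministic-decrease1} together with the stepsize choice $\eta_{t,k}=1/L_k$ and the identity $y^{t,k} = \Exp_{y^{t,k-1}}(-\tfrac{1}{L_k}P_k^{y^{t,k-1}}\grad f(y^{t,k-1}))$, the bookkeeping inequality
\begin{equation*}
f(y^{t,0}) - f(y^{t,m}) \geq \sum_{k\in[m]} \frac{1}{2L_k}\|P_k^{y^{t,k-1}}\grad f(y^{t,k-1})\|_{y^{t,k-1}}^2 \geq \frac{L_{\min}}{2}\sum_{k\in[m]} d(y^{t,k-1},y^{t,k})^2,
\end{equation*}
since $d(y^{t,k-1},y^{t,k}) \leq \tfrac{1}{L_k}\|P_k^{y^{t,k-1}}\grad f(y^{t,k-1})\|_{y^{t,k-1}}$.

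For the first case, suppose $\sum_{k\in[m]} d(y^{t,k-1},y^{t,k}) \leq r$. Then by the triangle inequality $d(y^{t,0},y^{t,k}) \leq \sum_{j\in[k]} d(y^{t,j-1},y^{t,j}) \leq r$ for every $k$, so the hypothesis $\max_{k} d(y^{t,0},y^{t,k}) \leq r$ of Assumption~\ref{ass:gap} is met and Proposition~\ref{prop:norm-equivalence} applies: $\|v\|_{y^{t,0},\cP} \geq (1-\sqrt{m}\gamma)\|v\|_{y^{t,0}}$ for all $v$. Plugging $v = \grad f(y^{t,0})$ into the first displayed bound above yields
\begin{equation*}
f(y^{t,0}) - f(y^{t,m}) \geq \frac{L_{\min}^2(1-\sqrt{m}\gamma)^2}{4 L_{\max}(L_{\min}^2 + L_f^2(m-1)m)}\|\grad f(y^{t,0})\|_{y^{t,0}}^2 = \eta\|\grad f(y^{t,0})\|_{y^{t,0}}^2,
\end{equation*}
which is \eqref{eq:lemma-small-step}.

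For the second case, suppose $\sum_{k\in[m]} d(y^{t,k-1},y^{t,k}) > r$. By Cauchy–Schwarz (the $\|\cdot\|_1 \leq \sqrt{m}\,\|\cdot\|_2$ bound on $\mathbb{R}^m$), $\sum_{k\in[m]} d(y^{t,k-1},y^{t,k})^2 \geq \tfrac{1}{m}\left(\sum_{k\in[m]} d(y^{t,k-1},y^{t,k})\right)^2 > r^2/m$. Substituting into the second displayed bound above gives
\begin{equation*}
f(y^{t,0}) - f(y^{t,m}) \geq \frac{L_{\min}}{2}\cdot\frac{r^2}{m} = \eta',
\end{equation*}
which is \eqref{eq:lemma-large-step}. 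This completes the proof. The only mildly delicate point is making sure the distance hypothesis of Assumption~\ref{ass:gap} (phrased in terms of $\max_k d(y^{t,0},y^{t,k})$) is correctly deduced from the summed-consecutive-distances condition via the triangle inequality in Case~1; everything else is a direct assembly of the preceding lemmas, so I do not anticipate a substantive obstacle.
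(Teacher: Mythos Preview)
Your proof is correct and follows essentially the same approach as the paper's: in Case~1 you use the triangle inequality to trigger Assumption~\ref{ass:gap}, then combine Proposition~\ref{prop:norm-equivalence} with Lemmas~\ref{lemma:sufficient-decrease-deterministic} and~\ref{lemma:deterministic-chaining-bound}; in Case~2 you use the $\ell_1$--$\ell_2$ norm comparison and the elementary distance bound $d(y^{k-1},y^k)\le \tfrac{1}{L_k}\|P_k^{y^{k-1}}\grad f(y^{k-1})\|_{y^{k-1}}$ together with \eqref{eq:deterministic-decrease1}, exactly as the paper does. The only cosmetic difference is that you pre-compute the ``bookkeeping inequality'' $f(y^0)-f(y^m)\ge \tfrac{L_{\min}}{2}\sum_k d(y^{k-1},y^k)^2$ up front, whereas the paper runs the corresponding chain of inequalities inline.
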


\begin{proof}[Proof of Lemma \ref{lemma:deterministic-small-large-step}]
We fix some $t \geq 1$ and suppress the dependence on $t$ in the notation throughout the proof. 
\begin{enumerate}
	\item If $\sum_{k \in [m]} d(y^{k-1},y^k) \leq r$, then $\max_{k \in [m]} d(y^k,y^0) \leq r$ also by the triangle inequality. The result now follows from  \eqref{eq:deterministic-decrease} in Lemma \ref{lemma:sufficient-decrease-deterministic}, Proposition \ref{prop:norm-equivalence} and Lemma \ref{lemma:deterministic-chaining-bound}.
	
	\item If $\sum_{k \in [m]} d(y^{k-1},y^k) > r$, then
	\begin{align*}
	r^2 < \left( \sum_{k \in [m]} d(y^{k-1},y^k) \right)^2 \leq m \sum_{k \in [m]} d(y^{k-1},y^k)^2&\leq m \sum_{k \in [m]} \frac{1}{L_k^2} \left\|P^{y^{k-1}}_k \grad f(y^{k-1})\right\|_{y^{k-1}}^2\\
	&\leq \frac{m}{L_{\min}} \sum_{k \in [m]} \frac{1}{L_k} \left\|P^{y^{k-1}}_k \grad f(y^{k-1})\right\|_{y^{k-1}}^2 \\
	&\leq \frac{2m}{L_{\min}} (f(y^0) - f(y^m)),
	\end{align*}
	where the second inequality follows since $\|\cdot\|_1 \leq \sqrt{m} \|\cdot\|_2$ for $\ell_1$- and $\ell_2$-norms in $\bbR^m$, the third inequality follows from the definition of $y^k$ in Algorithm \ref{alg:tangent-subspace-descent}, and the final inequality follows from \eqref{eq:deterministic-decrease1} in Lemma \ref{lemma:sufficient-decrease-deterministic}.\Halmos
\end{enumerate}
\end{proof}

\begin{theorem}[Convergence with Deterministic $\cP$]\label{thm:deterministic-descent-rates}
	Let $f:M \to \bbR$ be a $L_f$-smooth function such that Assumption \ref{ass:block-smooth} holds. and that $\cP$ is $(\gamma,r)$-gap ensuring (Assumption \ref{ass:gap}). Suppose that the sequence $\{x^t\}_{t \geq 1}$ is generated from Algorithm \ref{alg:tangent-subspace-descent} with a projection selection rule $\cP$ satisfying Assumption \ref{ass:gap}. Let
	\[ \eta := \frac{L_{\min}^2 (1-\sqrt{m} \gamma)^2}{4 L_{\max}\left( L_{\min}^2 + L_f^2(m-1)m \right)}, \quad \eta' := \frac{L_{\min} r^2}{2m}.\]
	Then the following hold:
	\begin{enumerate}%
		\item For $t > (f(x^1)-f^*)/\eta'$, we have
		\begin{align*}
		&\lim_{t\to\infty} \|\grad f(x^t)\|_{x^t} = 0,\\
		&\min_{s \in [t]} \|\grad f(x^s)\|_{x^s} \leq \max_{k \leq (f(x^1)-f^*)/\eta'} \sqrt{\frac{f(x^1) - f^* - \eta' k}{\eta(t - k)}} = \mathcal{O}\left(m/\sqrt{t}\right). 
		\end{align*}
		\item Any limit point $x^* \in M$ of $\{x^t\}_{t \in \bbN}$ is a stationary point $\grad f(x^*) = 0$.
		\item If in addition $f$ is g-convex and the diameter of $\{x \in M : f(x) \leq f(x^1)\}$ is $R$, then
		\[ 
		f(x^{t+1}) - f^* \leq \frac{f(x^1) - f^*}{1 + \min\left\{ \frac{(f(x^1)-f^*)\eta}{R^2}, \frac{\eta'}{f(x^1)-f^*} \right\} t} = \mathcal{O}\left(m^2/t\right).
		\]
	\end{enumerate}
\end{theorem}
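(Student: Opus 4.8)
The plan is to run everything off the per-iteration dichotomy recorded in Lemma~\ref{lemma:deterministic-small-large-step}. Recalling that $x^{t-1}=y^{t,0}$ and $x^t=y^{t,m}$, that lemma says each iteration $t\geq 1$ is either a \emph{small step}, with $f(x^{t-1})-f(x^t)\geq \eta\|\grad f(x^{t-1})\|_{x^{t-1}}^2$, or a \emph{large step}, with $f(x^{t-1})-f(x^t)\geq \eta'$ (here I use that $\gamma<1/\sqrt m$, so $\eta>0$, as is implicit in Proposition~\ref{prop:norm-equivalence}). Two immediate consequences: $\{f(x^t)\}_{t\geq 1}$ is non-increasing and bounded below by $f^*$, hence convergent; and telescoping $f(x^1)-f^*\geq f(x^1)-f(x^{t+1})=\sum_{s=1}^{t}\bigl(f(x^s)-f(x^{s+1})\bigr)$ shows the number of large steps among iterations $2,\dots,t+1$ is at most $(f(x^1)-f^*)/\eta'$, so all but finitely many iterations are small steps (in particular, for $t>(f(x^1)-f^*)/\eta'$ this count $K$ satisfies $t-K>0$, which is why that range is imposed).

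For Part 1, if $\|\grad f(x^t)\|_{x^t}\not\to 0$ then along some subsequence $\|\grad f(x^{t_j})\|_{x^{t_j}}\geq\delta>0$, and for $t_j$ past the last large step the iteration $t_j+1$ is a small step, giving $f(x^{t_j})-f(x^{t_j+1})\geq\eta\delta^2$ infinitely often, contradicting convergence of $\{f(x^t)\}$; this yields the limit. For the rate, bound each of the $t$ telescoped terms below by $\eta'$ on large steps and by $\eta\min_{u\in[t]}\|\grad f(x^u)\|_{x^u}^2$ on small steps (valid since those indices $s$ lie in $[t]$), obtaining $f(x^1)-f^*\geq K\eta'+(t-K)\eta\min_{u\in[t]}\|\grad f(x^u)\|_{x^u}^2$; solving for the minimum and maximizing over admissible $k\leq (f(x^1)-f^*)/\eta'$ gives the stated bound, and substituting the asymptotics $\eta=\Theta(1/m^2)$, $\eta'=\Theta(1/m)$ gives $\mathcal O(m/\sqrt t)$. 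Part 2 is then immediate: if $x^{t_j}\to x^*$, Part 1 gives $\|\grad f(x^{t_j})\|_{x^{t_j}}\to 0$, and $L_f$-smoothness \eqref{eqn:smooth} gives $\|\Gamma_{x^*}^{x^{t_j}}\grad f(x^*)-\grad f(x^{t_j})\|_{x^{t_j}}\leq L_f\, d(x^*,x^{t_j})\to 0$; since parallel transport is an isometry, $\|\grad f(x^*)\|_{x^*}=0$.

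For Part 3, note first that $\{x:f(x)\leq f(x^1)\}$ is closed and has finite diameter $R$, hence compact by Hopf--Rinow, so $f$ attains its global minimum at some $x^*$ in it; every iterate lies in this set, so $d(x^t,x^*)\leq R$. Writing $\delta_t:=f(x^t)-f^*$ and combining g-convexity \eqref{eqn:g-convex} (with $v\in\Exp_{x^t}^{-1}(x^*)$, so $\|v\|_{x^t}=d(x^t,x^*)$) with Cauchy--Schwarz gives $\delta_t\leq \|\grad f(x^t)\|_{x^t}\,d(x^t,x^*)\leq R\,\|\grad f(x^t)\|_{x^t}$, so small-step iterations satisfy $\delta_t-\delta_{t+1}\geq \eta\delta_t^2/R^2$ while large-step iterations satisfy $\delta_t-\delta_{t+1}\geq \eta'$. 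The crux — and the part that departs from the classical BCD analysis, as flagged in Remark~\ref{rem:gap-ensuring-distance} — is to fold both regimes into a single telescoping bound on $1/\delta_{t+1}-1/\delta_t=(\delta_t-\delta_{t+1})/(\delta_t\delta_{t+1})$: on small steps this is $\geq \eta/R^2$ (using $\delta_{t+1}\leq\delta_t$), and on large steps, using the a priori bound $\delta_t,\delta_{t+1}\leq\delta_1=f(x^1)-f^*$, it is $\geq \eta'/\delta_1^2$. Hence $1/\delta_{t+1}\geq 1/\delta_1 + ct$ with $c:=\min\{\eta/R^2,\ \eta'/(f(x^1)-f^*)^2\}$, which rearranges exactly to the claimed $\delta_{t+1}\leq (f(x^1)-f^*)/(1+c(f(x^1)-f^*)t)$; the $\mathcal O(m^2/t)$ rate follows once more from $\eta=\Theta(1/m^2)$, $\eta'=\Theta(1/m)$. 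The only genuine subtlety beyond bookkeeping is that the $1/\delta$ manipulation requires $\delta_t,\delta_{t+1}>0$, so one first disposes of the degenerate case where some $\delta_{t_0}=0$ (then monotonicity makes the bound trivially true for all $t\geq t_0$).
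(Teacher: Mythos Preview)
Your proposal is correct and follows essentially the same approach as the paper: both run off the dichotomy of Lemma~\ref{lemma:deterministic-small-large-step}, bound the number of large steps by $(f(x^1)-f^*)/\eta'$, telescope for the stationarity rate in Part~1, and for Part~3 convert both regimes into a lower bound on $1/\delta_{t+1}-1/\delta_t$ (using $\delta_t\delta_{t+1}\leq\delta_1^2$ for large steps) before summing. Your treatment is slightly more careful than the paper's in a few places---you invoke Hopf--Rinow to secure a minimizer in the sublevel set, handle the degenerate case $\delta_{t_0}=0$, and spell out the parallel-transport argument for Part~2---but these are refinements of the same argument rather than a different route.
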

Note that the rates in Theorem \ref{thm:deterministic-descent-rates} are worse than their Euclidean counterparts \cite{BeckTetruashvili2013}. However, we show that in Example \ref{ex:gap-product} that when $M$ is a product manifold (which generalizes the Euclidean setting, see Example \ref{ex:gap-bcd}), Lemma \ref{lemma:deterministic-chaining-bound} can be improved, which results in improved rates in Theorem \ref{thm:deterministic-descent-rates}. These match the ones from the Euclidean setting \cite{BeckTetruashvili2013}.

\begin{proof}[Proof of Theorem \ref{thm:deterministic-descent-rates}]
	Fix some $t \geq 1$. For $s \in [t-1]$, we know from Lemma \ref{lemma:deterministic-small-large-step} that one of the following must hold:
	\[ f(x^s) - f(x^{s+1}) \geq \eta \|\grad f(x^s)\|_{x^s}^2, \quad \text{or} \quad f(x^s) - f(x^{s+1}) \geq \eta'. \]
	Let $R(t) := \left\{ s \in [t-1] : f(x^s) - f(x^{s+1}) \geq \eta' \right\}$. Summing the inequalities over $s \in [t-1]$ gives
	\[ f(x^1) - f(x^t) \geq \eta \sum_{s \in [t-1] \setminus R(t)} \|\grad f(x^s)\|_{x^s}^2 + |R(t)| \eta'. \]
	Note that we must have $|R(t)| \leq (f(x^1)-f(x^t))/\eta' \leq (f(x^1)-f^*)/\eta'$.
	
	\begin{enumerate}%
		\item Fix $\delta > 0$. Since $|R(t)|$ remains bounded, if there exists a subsequence of $\{ \|\grad f(x^t)\|_{x^t} : t \geq 1\}$ which is bounded below by $\delta$, then $\eta \sum_{s \in [t-1] \setminus R(t)} \|\grad f(x^s)\|_{x^s}^2 \to \infty$. Therefore, every subsequence of $\{ \| \grad f(x^t) \|_{x^t} : t \geq 1 \}$ must be less than $\delta$ eventually, so $\lim_{t\to\infty} \|\grad f(x^t)\|_{x^t} = 0$.
		
		The bound above implies
		\[ f(x^1) - f(x^t) - \eta' |R(t)| \geq \eta \sum_{s \in [t] \setminus R(t)} \|\grad f(x^s)\|_{x^s}^2 \geq (t - |R(t)|) \eta \min_{s \in [t]} \|\grad f(x^s)\|_{x^s}^2, \]
		which gives the second result.
		
		\item Since $\|\grad f(x^t)\|_{x^t} \to 0$,  any limit point $x^*$ is a stationary point.
		
		\item For $s \in [t]$, if we have $f(x^s) - f(x^{s+1}) \geq \eta \|\grad f(x^s)\|_{x^s}^2$ then we also have 
		\begin{align*}
		f(x^s) - f(x^*) \leq \la \grad f(x^s), \Exp_{x^s}^{-1}(x^*) \ra_{x^s}\leq \|\grad f(x^s)\|_{x^s} \|\Exp_{x^s}^{-1}(x^*)\|_{x^s}&\leq \|\grad f(x^s)\|_{x^s} d(x^s,x^*)\\
		& \leq \sqrt{(f(x^s) - f(x^{s+1}))/\eta} d(x^s,x^*)\\
		&\leq R \sqrt{(f(x^s) - f(x^{s+1}))/\eta}.
		\end{align*}
		Otherwise, we get $f(x^s) - f(x^{s+1}) \geq \eta'$. Denoting $A_s := f(x^s) - f(x^*)$, these two inequalities can be written as
		\[ \frac{\eta}{R^2} A_s^2 \leq A_s - A_{s+1} \quad \text{or} \quad A_s - A_{s+1} \geq \eta'. \]
		Dividing by $A_s A_{s+1}$ and recognizing that $1 \leq A_s/A_{s+1}$, $A_s A_{s+1} \leq A_1^2$, we have
		\[ \frac{\eta}{R^2} \leq \frac{1}{A_{s+1}} - \frac{1}{A_s} \quad \text{or} \quad \frac{\eta'}{A_1^2} \leq \frac{1}{A_{s+1}} - \frac{1}{A_s} \implies \frac{1}{A_{s+1}} - \frac{1}{A_s} \geq \min\left\{ \frac{\eta}{R^2}, \frac{\eta'}{A_1^2} \right\}. \]
		Summing this over $s \in [t]$, we get
		\[ \frac{1}{A_{t+1}} - \frac{1}{A_1} \geq t \min\left\{ \frac{\eta}{R^2}, \frac{\eta'}{A_1^2} \right\} \implies A_{t+1} \leq \frac{A_1}{1 + \min\left\{ \frac{A_1 \eta}{R^2}, \frac{\eta'}{A_1} \right\} t}, \]
		which gives the result.\Halmos
		
	\end{enumerate}
\end{proof}

\subsection{Examples}\label{sec:deterministic-examples}

We now exhibit a series of selection rules satisfying the gap ensuring condition. The first rule applies to any manifold.
\begin{example}[Parallel Transport Rule]\label{ex:parallel-transport-rule}
For any $t \geq 1$, given $y^{t,0}$, fix an orthogonal decomposition $\{P_{t,k}'\}_{k \in [m]}$ of $T_{y^{t,0}} M$. We will define $\cP$ recursively via the rule
\[
\cP(y^{t,k-1},k)=\Gamma^{y^{t,k-1}}_{y^{t,0}} P_{t,k}' \Gamma_{y^{t,k-1}}^{y^{t,0}}.
\]
It follows that $P_{t,k} = P_{t,k}'$ so $\cP$ is a $(0,\infty)$-gap ensuring rule. Note that since $r = \infty$, the convergence rate bounds are obtained by taking the bounds in Theorem \ref{thm:deterministic-descent-rates} and setting $\eta' = 0$.
\epr
\end{example}

While $\cP$ applies generally to any manifold, the main difficulty in its practical implementation is that on many manifolds it is unclear how, or incredibly expensive, to compute parallel transport maps. For example, on every Stiefel manifold $\St(p,n)$ with the exception to that of the manifold of orthogonal matrices $O_n = \St(n,n)$, there is no known closed form for the parallel transport operator. We provide a non-trivial, and very useful, example of a $(\gamma,r)$-gap ensuring rule for the orthogonal manifold $O_n$. While the construction is conceptually simple, it requires some background knowledge of the manifold structure of $O_n$, so we defer its presentation to Section \ref{sec:orthogonal}.

For the rest of this section, we examine the class of product manifolds, for which the rule described in Example \ref{ex:parallel-transport-rule} can be efficiently computed.

\begin{example}[Product Manifolds]\label{ex:gap-product}
Let $M := M_1\times \ldots \times M_m$ be a Cartesian product of Riemannian manifolds with the endowed product structure. Write $y \in M$ as $y = (y_1,\ldots,y_m)$. The tangent space is $T_y M = \bigoplus_{k \in [m]} T_{y_k} M_k$. Denote the zero element of $T_{y_k} M$ as $0_{y_k}$. Due to the product structure of $M$, it can be easily shown that the exponential map and parallel transport also decomposes into components $M_k$: for any $x = (x_1,\ldots,x_m) \in M$, $v = (v_1,\ldots,v_m) \in T_x M$, 
\begin{align*}
y &=\Exp_{x}(v)=\left(\Exp_{x_1}(v_1), \ldots, \Exp_{x_m}(v_m) \right)\\
\Gamma^y_x&=\left( \Gamma^{y_1}_{x_1},\ldots,\Gamma^{y_m}_{x_m} \right).
\end{align*}

For $y = (y_1,\ldots,y_m) \in M$, $k \in [m]$, we define
\[\cP(y,k) : v = (v_1,\ldots,v_m) \mapsto \left( 0_{y_1},\ldots,0_{y_{k-1}},v_k,0_{y_{k+1}},\ldots,0_{y_m} \right).\]
Observe that $\{\cP(y,k)\}_{k \in [m]}$ is an orthogonal decomposition of $T_y M$ for any $y\in M$. The projection selection rule we use in Algorithm \ref{alg:tangent-subspace-descent} is the same as the one from Example \ref{ex:parallel-transport-rule}: at a point $y^{t,0} \in M$, use the orthogonal decomposition $\{\CP(y^{t,0},k) \}_{k \in [m]}$ and define
\[ P_k^{y^{t,k-1}} := \Gamma_{y^{t,0}}^{y^{t,k-1}} \CP(y^{t,0},k) \Gamma_{y^{t,k-1}}^{y^{t,0}}. \]
As before, this is a $(0,\infty)$-gap ensuring rule. Due to the decomposability of the exponential map and parallel transport operators on the product manifold $M$, it is easily shown that
\[ P_k^{y^{t,k-1}} = \CP(y^{t,k-1},k). \]
Thus, due to the product structure, unlike Example \ref{ex:parallel-transport-rule}, $P_k^{y^{t,k-1}}$ does not require computing parallel transports.

In fact, the product manifold setting offers two more simplifications. Note that if $y = \Exp_{x}(\CP(x,k) v)$, then $y_j = x_j$ for $j \neq k$, and $y_k = \Exp_{x_k}(v_k)$. Thus, at any inner iteration $k \in [m]$ of Algorithm \ref{alg:tangent-subspace-descent}, computing projections of a tangent vector and updating the current iterate is easy; we only need to update the component corresponding to $M_k$.

Furthermore, due to the decomposability of the parallel transport, the constant in Lemma \ref{lemma:deterministic-chaining-bound} can be improved to $L_f^2/L_{\min}^2$ (removing a factor of $m-1$). This means that the $\eta,\eta'$ terms in the bounds from Lemma \ref{lemma:deterministic-small-large-step} and Theorem \ref{thm:deterministic-descent-rates} become $\eta = \frac{L_{\min}^2}{4 L_{\max}\left( L_{\min}^2 + L_f^2 m \right)}$, $\eta' = \infty$. A closer inspection shows that these bounds exactly match those of \citet{BeckTetruashvili2013} for BCD in the Euclidean setting.
\epr
\end{example}

As a special case of this subspace selection rule, we have the selection rules for BCD. 

\begin{example}[Block Coordinate Descent]\label{ex:gap-bcd}
Let $e_i$ denote the $i$-th standard coordinate vector in $\R^n$ and $\{S_k\}_{k \in [m]}$ be a partition of the set $\{e_i\}_{i \in [n]}$. It follows that $\R^n$ is equal to the direct sum of $\bigoplus_{k \in [m]} \Span(S_k)$, which is a product manifold. Furthermore, letting $P_k$ be the projection onto $\Span(S_k)$, the rule from the previous example is $\cP(x,k) = P_k$, which is exactly block coordinate descent. This shows that our analysis can recover the standard analysis for BCD by \citet{BeckTetruashvili2013}.
\end{example}

Another important application of the product setting is Tensor principle component analysis (PCA).

\begin{example}[Tensor PCA]\label{ex:tensor}
Given a tensor $T \in \bbR^{d_1 \times \cdots \times d_m}$, we wish to compute a decomposition of $T$ into tensor products of orthogonal matrices. We do this by solving the following problem:
\begin{align}
\min_{C,U_1,\ldots,U_m} \quad & \left\| T - C \times_1 U_1 \times \cdots \times_m U_m \right\|_F^2\notag\\
\text{s.t.} \quad & C \in \bbR^{n_1 \times \cdots \times n_m}\label{eq:example-tensor}\\
& U_k \in \St(d_k,n_k), \ k \in [m].\notag
\end{align}
In the objective, $\|\cdot\|_F$ is the Frobenius norm on tensors, and $\times_k$ is the mode-$k$ tensor product. The objective is smooth in the variables $(C,U_1,\ldots,U_m)$; we refer to \citet{Xu2015} for detailed definitions and smoothness results.

Of particular interest to us is the domain. Here, $\St(p,n)$ is the \emph{Stiefel manifold} of $n \times p$ matrices (with $p \leq n$) with orthogonal columns:
\[ \St(p,n) = \left\{ U \in \bbR^{n \times p} : U^\top U = I_p \right\}. \]
Thus, the domain is actually a product manifold:
\[ (C,U_1,\ldots,U_m) \in M := \bbR^{n_1 \times \cdots \times n_m} \times \St(d_1,n_1) \times \cdots \times \St(d_m,n_m). \]
To describe the geometry of $M$, it thus suffices to describe the geometry of $\St(p,n)$. We give the key identities here, and refer to \citet{EdelmanEtAl1998} for full details. The tangent space at a matrix $U \in \St(p,n)$ is
\[ T_U \St(p,n) := \left\{ U A + U_{\perp} B : \begin{aligned}
&A = -A^\top \in \bbR^{p \times p},\\
&B \in \bbR^{(n-p) \times p},\\
&U_{\perp} \in \bbR^{n \times (n-p)} \text{ s.t. } U U^\top + U_\perp U_{\perp}^\top = I_n
\end{aligned} \right\}, \]
which is a subset of $\bbR^{n \times p}$. Since $\St(p,n)$ is embedded in $\bbR^{n \times p}$, the gradient of a function $f:\St(p,n) \to \bbR$ at $U$ can be computed as the projection of the Euclidean gradient onto $T_{U} \St(p,n)$ (see \citet[Section 2]{ZhangEtAl2019}). The exponential map is at some $U A + U_{\perp} B \in T_U \St(p,n)$ is
\[ \Exp_U(U A + U_{\perp} B) := \begin{bmatrix}
U & U_{\perp}
\end{bmatrix} \exp\left( \begin{bmatrix}
A & -B^\top \\ B & 0
\end{bmatrix} \right) \begin{bmatrix} I_p \\ 0\end{bmatrix}. \]
Computing this exponential map requires a computing the matrix exponential of a $n \times n$ matrix. If we attempt to run (the generalization of) gradient descent to solve \eqref{eq:example-tensor}, we must compute the exponential map for each $\St(d_k,n_k)$ at each iteration. On the other hand, using tangent subspace descent, we need only compute one of these each iteration.
\epr
\end{example}

\section{Randomized Subspace Selection}\label{sec:randomized}

In this section, we analyze Tangent Subspace Descent when it is equipped with a randomized subspace selection rule $\cP$. Recall that a randomized subspace selection rule $\CP$ can be instantiated in Algorithm \ref{alg:tangent-subspace-descent} by setting $m=1$, and $P_t := \CP(y^{t,0},1)$ is a projection onto a randomly selected subspace of $T_{x^{t-1}} M$. Thus, at iteration $t \geq 1$ in Algorithm \ref{alg:tangent-subspace-descent}, we compute
\begin{equation}\label{eq:randomized-update-prob}
x^t = \Exp_{x^{t-1}} \left( - \eta_t P_t \grad f(x^{t-1}) \right),
\end{equation}
noting that $x^t$ itself is a random variable due to the randomness in $P_t$.

Most existing rules of practical interest draw $P_t$ from a finite collection of projections, such as the ones for randomized BCD \cite{Nesterov2012,FrongilloReid2015} and the orthogonal manifold \cite{ShalitChechik2014}. In Section \ref{sec:randomized-norm}, we propose a new and relatively unrestrictive condition on $\cP$, the $C$-randomized norm condition, that guarantees convergence for smooth functions. This condition is satisfied for the schemes of \cite{Nesterov2012,ShalitChechik2014,FrongilloReid2015} as well as other important classes, demonstrated in Section \ref{sec:randomized-examples}. However, we note that our condition is sufficiently general that it covers the case when $\cP$ draws from an infinite collection of projections. We exploit this in Section \ref{sec:randomized-stiefel}, where we present a new randomized $\cP$ for optimization over the Stiefel manifold $\St(p,n)$ for $p < n$.

\subsection{A Convergent Randomized \texorpdfstring{$\cP$}{Lg} via the \texorpdfstring{$C$}{Lg}-Randomized Norm Condition}\label{sec:randomized-norm}

As before, the aim is to achieve a sufficient decrease inequality similar to \eqref{eqn:sufficient-decrease}, from which the convergence rate follows. In Section \ref{sec:deterministic}, we saw that Assumption \ref{ass:block-smooth} facilitated the selection of stepsizes $\eta_{t,k}$ and achieving sufficient decrease. Since $P_t$ is a random projection drawn from a possibly infinite collection, we will assume a uniform constant $L_f < \infty$ exists such that for \emph{any} $t \geq 1$ and realization $P_t$,
\begin{multline}\label{eqn:smooth-block-randomized}
f\left(\Exp_{x^{t-1}}(P_t v) \right) \leq f(x^{t-1}) + \ip{\grad f(x^{t-1})}{\cP(x^{t-1},k) v}_{x^{t-1}} + \frac{L_f}{2} \left\|P_t v\right\|_{x^{t-1}}^2.
\end{multline}
Since we are considering smooth objective functions $f$, such an upper bound exists from \eqref{eqn:smooth-ub}. From this, we can choose stepsizes $\eta_t$ which give a preliminary sufficient decrease guarantee, similar to Lemma \ref{lemma:sufficient-decrease-deterministic}. (In practice, we can implement $\eta_t$ via backtracking line search, to achieve the condition \eqref{eq:randomized-decrease-block}, which can result in larger steps and quicker convergence.)

\begin{lemma}[Randomized Sufficient Decrease Template]\label{lemma:sufficient-decrease-randomized}
Suppose $f$ is an $L_f$-smooth function so that \eqref{eqn:smooth-block-randomized} holds, and that at each $t \geq 1$ in Algorithm \ref{alg:tangent-subspace-descent}, the stepsize is chosen as $\eta_t := 1/L_f$. Then
\[f(x^{t-1}) - \bbE\left[ f(x^t) \mid x^{t-1} \right] \geq \frac{1}{2 L_f} \bbE\left[ \left\| P_t \grad f(x^{t-1}) \right\|_{x^{t-1}}^2 \mid x^{t-1} \right].\]
\end{lemma}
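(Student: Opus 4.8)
The plan is to apply the randomized block smoothness bound \eqref{eqn:smooth-block-randomized} directly to the TSD update and then pick the stepsize that maximizes the guaranteed per-step decrease. Fix $t \geq 1$ and abbreviate $g := \grad f(x^{t-1})$. In the randomized regime we have $m = 1$, so $\cP(x^{t-1},1) = P_t$ and the update is exactly $x^t = \Exp_{x^{t-1}}(-\eta_t P_t g)$; this is precisely what \eqref{eqn:smooth-block-randomized} describes when we plug in $v = -\eta_t g$.

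The first step is to simplify the two terms on the right-hand side of \eqref{eqn:smooth-block-randomized} using that $P_t$ is an orthogonal projection on $T_{x^{t-1}}M$, hence self-adjoint and idempotent with respect to $\ip{\cdot}{\cdot}_{x^{t-1}}$. The linear term collapses via
\[
\ip{g}{P_t(-\eta_t g)}_{x^{t-1}} = -\eta_t \ip{P_t g}{P_t g}_{x^{t-1}} = -\eta_t \|P_t g\|_{x^{t-1}}^2,
\]
while the quadratic term is $\|P_t(-\eta_t g)\|_{x^{t-1}}^2 = \eta_t^2 \|P_t g\|_{x^{t-1}}^2$, so \eqref{eqn:smooth-block-randomized} becomes
\[
f(x^t) \leq f(x^{t-1}) - \left( \eta_t - \tfrac{L_f}{2}\eta_t^2 \right)\|P_t g\|_{x^{t-1}}^2 .
\]
The second step is to set $\eta_t = 1/L_f$, which maximizes $\eta_t - \tfrac{L_f}{2}\eta_t^2$ and makes that coefficient equal to $\tfrac{1}{2L_f}$; hence $f(x^t) \leq f(x^{t-1}) - \tfrac{1}{2L_f}\|P_t g\|_{x^{t-1}}^2$ holds for \emph{every} realization of $P_t$. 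The final step is to take the conditional expectation $\bbE[\,\cdot \mid x^{t-1}]$ of both sides, noting that $f(x^{t-1})$ and $g$ are deterministic given $x^{t-1}$ while $P_t$ (and therefore $x^t$) carries the only randomness, and then rearrange into the stated inequality.

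There is essentially no obstacle here: this lemma is the exact randomized analogue of the deterministic one-step bound \eqref{eq:deterministic-decrease-block}, and the only points deserving care are that \eqref{eqn:smooth-block-randomized} must be invoked with the specific vector $v = -\eta_t \grad f(x^{t-1})$, and that self-adjointness and idempotency of $P_t$ are what reduce the linear term to $-\eta_t \|P_t g\|_{x^{t-1}}^2$ (as opposed to a cross term between $g$ and $P_t g$).
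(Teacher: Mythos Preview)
Your proof is correct and follows essentially the same approach as the paper: plug the TSD update into the smoothness bound \eqref{eqn:smooth-block-randomized}, choose $\eta_t = 1/L_f$ to optimize the resulting one-step decrease, and take conditional expectation. You spell out the self-adjointness/idempotency argument for reducing $\ip{g}{P_t g}_{x^{t-1}}$ to $\|P_t g\|_{x^{t-1}}^2$ more explicitly than the paper does, but the substance is identical.
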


\begin{proof}[Proof of Lemma \ref{lemma:sufficient-decrease-randomized}]
The stepsize selection $\eta_t = 1/L_f$ minimizes the right hand side of \eqref{eqn:smooth-block-randomized} to get
\begin{equation}\label{eq:randomized-decrease-block}
f(x^{t-1}) - f(x^t) \geq \frac{1}{2 L_f} \left\| P_t \grad f(x^{t-1}) \right\|_{x^{t-1}}^2.
\end{equation}
Taking the expectation over the randomness in $P_t$, we have
\begin{align*}
f(x^{t-1}) - \bbE\left[ f(x^t) \mid x^{t-1} \right] &\geq \frac{1}{2 L_f} \bbE\left[ \left\| P_t \grad f(x^{t-1}) \right\|_{x^{t-1}}^2 \mid x^{t-1} \right].\Halmos
\end{align*}
\end{proof}

In order to get a sufficient decrease guarantee similar to \eqref{eqn:sufficient-decrease}, we must lower bound the right hand side with $\|\grad f(x^{t-1})\|_{x^{t-1}}$. This motivates the $C$-randomized norm condition for $\cP$.
\begin{assumption}[$C$-randomized norm]\label{ass:random-norm}
Suppose there exists $C > 0$ such that at any $t \geq 1$ in Algorithm \ref{alg:tangent-subspace-descent} and for any $v \in T_{x^{t-1}} M$, we have
\[ \sqrt{\bbE\left[ \left\| P_t v \right\|_{x^{t-1}}^2 \mid x^{t-1} \right]} \geq C \|v\|_{x^{t-1}}. \]
Then we say that $\cP$ satisfies the \emph{$C$-randomized norm condition}.
\end{assumption}
Assumption \ref{ass:random-norm} precludes the progressive degeneration of $v\mapsto \bbE[\|P_tv\|_{x^{t-1}}^2 \mid x^{t-1}]$ that undergirds the pathology of Theorem \ref{thm:projection-failure-randomized}. We are now ready to present the convergence analysis.

\begin{theorem}\label{thm:randomized-descent-rates}
	Let $f:M \to \bbR$ be a $L_f$-smooth function satisfying \eqref{eqn:smooth-block-randomized}. Suppose that the sequence $\{x^t\}_{t \geq 1}$ is generated from Algorithm \ref{alg:tangent-subspace-descent} with $\cP$ satisfying Assumption \ref{ass:random-norm} and stepsizes chosen according to Lemma \ref{lemma:sufficient-decrease-randomized}. Then the following hold:
	\begin{enumerate}%
		\item We have
		\begin{align*} 
		&\lim_{t\to\infty} \bbE\left[ \|\grad f(x^t)\|_{x^t} \right] = 0,\\
		&\min_{s \in [t]} \bbE\left[ \|\grad f(x^{s-1})\|_{x^{s-1}} \right] \leq \sqrt{\frac{2L_{f}(f(x^0) - f^*)}{C^2 t}}. 
		\end{align*}
		\item Almost surely, $\|\grad f(x^t)\|_{x^t} \to 0$ and any limit point $x^* \in M$ of $\{x^t\}_{t \in \bbN}$ is a stationary point $\grad f(x^*) = 0$.
		\item If in addition $f$ is g-convex and the diameter of $\{x \in M : f(x) \leq f(x^1)\}$ is $R$, then
		\[ \bbE[f(x^t)] - f^* \leq \frac{2 L_{f} R^2 (f(x^0) - f^*)}{2 L_{f} R^2 + C^2 (f(x^0) - f^*) t}.\]
	\end{enumerate}
\end{theorem}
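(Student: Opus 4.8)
The plan is to follow the now-standard recipe for analyzing (stochastic) gradient-type methods: combine the one-step decrease from Lemma \ref{lemma:sufficient-decrease-randomized} with the $C$-randomized norm condition (Assumption \ref{ass:random-norm}) to obtain a clean expected sufficient decrease, and then run the usual telescoping/recursion arguments for each of the three claims. Concretely, chaining the two results gives
\begin{equation*}
f(x^{t-1}) - \bbE[f(x^t) \mid x^{t-1}] \geq \frac{1}{2L_f} \bbE[\|P_t \grad f(x^{t-1})\|_{x^{t-1}}^2 \mid x^{t-1}] \geq \frac{C^2}{2L_f} \|\grad f(x^{t-1})\|_{x^{t-1}}^2,
\end{equation*}
which is exactly the sufficient decrease relation \eqref{eqn:sufficient-decrease} with $\eta = C^2/(2L_f)$, now holding in conditional expectation. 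Taking total expectations and using the tower property, I get $\bbE[f(x^{t-1})] - \bbE[f(x^t)] \geq \tfrac{C^2}{2L_f}\bbE[\|\grad f(x^{t-1})\|_{x^{t-1}}^2]$.

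For part 1, I would telescope this inequality from $s=1$ to $t$, using $\bbE[f(x^t)] \geq f^*$, to get $\sum_{s=1}^{t}\bbE[\|\grad f(x^{s-1})\|_{x^{s-1}}^2] \leq \tfrac{2L_f}{C^2}(f(x^0)-f^*)$. The convergence $\bbE[\|\grad f(x^t)\|_{x^t}] \to 0$ follows because the series $\sum_t \bbE[\|\grad f(x^{t-1})\|^2_{x^{t-1}}]$ converges, hence its terms tend to $0$, and then Jensen's inequality ($\bbE[\|\cdot\|] \leq \sqrt{\bbE[\|\cdot\|^2]}$) upgrades this to convergence of $\bbE[\|\grad f(x^t)\|_{x^t}]$; the min-rate bound is obtained by lower bounding $\sum_{s=1}^t \bbE[\|\grad f(x^{s-1})\|^2_{x^{s-1}}] \geq t\min_{s\in[t]}\bbE[\|\grad f(x^{s-1})\|^2_{x^{s-1}}]$, rearranging, and applying Jensen once more. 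For part 2, since $f(x^{t})$ is a nonincreasing sequence bounded below (the per-realization decrease \eqref{eq:randomized-decrease-block} already holds surely, not just in expectation), it converges almost surely; moreover $\sum_t \bbE[\|\grad f(x^{t-1})\|^2_{x^{t-1}}] < \infty$ forces $\sum_t \|\grad f(x^{t-1})\|^2_{x^{t-1}} < \infty$ almost surely (a nonnegative series with finite expected sum is a.s.\ finite), so $\|\grad f(x^t)\|_{x^t}\to 0$ a.s.; continuity of the gradient then makes every limit point stationary.

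For part 3 (the g-convex case), I would mimic the deterministic argument in the proof of Theorem \ref{thm:deterministic-descent-rates}(3). Using g-convexity \eqref{eqn:g-convex} with $v \in \Exp_{x^{t-1}}^{-1}(x^*)$, Cauchy--Schwarz, $\|v\|_{x^{t-1}} = d(x^{t-1},x^*) \leq R$ (valid since the sublevel set $\{f \leq f(x^1)\}$ has diameter $R$ and is invariant under the algorithm by monotone decrease), and then the one-step decrease, I get
\begin{equation*}
f(x^{t-1}) - f^* \leq \|\grad f(x^{t-1})\|_{x^{t-1}}\, R \leq R\sqrt{\tfrac{2L_f}{C^2}(f(x^{t-1}) - \bbE[f(x^t)\mid x^{t-1}])}.
\end{equation*}
Squaring, writing $A_{t} := \bbE[f(x^t)] - f^*$, taking expectations and using Jensen ($\bbE[(f(x^{t-1})-f^*)^2]\geq A_{t-1}^2$ wait --- actually one wants the reverse; I should instead take expectations of the squared inequality $ (f(x^{t-1})-f^*)^2 \le \tfrac{2L_fR^2}{C^2}(f(x^{t-1})-\bbE[f(x^t)\mid x^{t-1}])$ and then use $\bbE[(f(x^{t-1})-f^*)^2] \ge (\bbE[f(x^{t-1})-f^*])^2 = A_{t-1}^2$), I obtain $A_{t-1}^2 \leq \tfrac{2L_fR^2}{C^2}(A_{t-1} - A_t)$. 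Dividing through by $A_{t-1}A_t$ and using $A_t \leq A_{t-1}$ gives $\tfrac{1}{A_t} - \tfrac{1}{A_{t-1}} \geq \tfrac{C^2}{2L_fR^2}$, and telescoping from the appropriate index yields the stated bound $\bbE[f(x^t)] - f^* \leq \tfrac{2L_fR^2(f(x^0)-f^*)}{2L_fR^2 + C^2(f(x^0)-f^*)t}$. The main subtlety to get right --- and the one place where care is genuinely needed rather than routine --- is the interplay between conditional and total expectations when squaring the g-convex inequality (ensuring Jensen is applied in the direction that preserves the inequality), together with the minor bookkeeping that the diameter bound on the sublevel set is used with $x^1$ rather than $x^0$; everything else is a direct transcription of well-known telescoping arguments.
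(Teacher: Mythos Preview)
Your proposal is correct and, for parts 1 and 3, follows essentially the same line as the paper: combine Lemma \ref{lemma:sufficient-decrease-randomized} with Assumption \ref{ass:random-norm} to get the conditional sufficient decrease with $\eta = C^2/(2L_f)$, then telescope and apply Jensen exactly as you describe (including the careful use of Jensen in the convex direction $\bbE[(f(x^{s-1})-f^*)^2]\ge A_{s-1}^2$ for part 3).

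The one genuine difference is in part 2. The paper invokes an auxiliary supermartingale-type result (Proposition \ref{prop:random-variable-convergence}), which uses Markov's inequality and Borel--Cantelli to conclude $b_t := \eta\|\grad f(x^t)\|_{x^t}^2 \to 0$ almost surely. Your argument is more elementary: from $\sum_t \bbE[b_t] < \infty$ and $b_t \geq 0$, Tonelli gives $\bbE[\sum_t b_t] < \infty$, hence $\sum_t b_t < \infty$ a.s., hence $b_t \to 0$ a.s. Both routes are valid; yours avoids the separate proposition entirely at the cost of implicitly using that the partial sums are monotone (so Tonelli/monotone convergence applies without integrability checks), while the paper's Borel--Cantelli argument is slightly more robust in settings where one only has the conditional decrease relation and not an a priori summable bound. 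Your observation about the $x^0$ versus $x^1$ bookkeeping in the diameter assumption is also apt; the paper handles it the same way (the telescoping starts from $A_0$).
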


\begin{proof}[Proof of Theorem \ref{thm:randomized-descent-rates}]
\begin{equation}\label{eq:sufficient-decrease-randomized}
f(x^{t-1}) - \bbE[f(x^t) \mid x^{t-1}] \geq \eta \|\grad f(x^{t-1})\|_{x^{t-1}}^2.
\end{equation}
\begin{enumerate}
	\item Take expectations in \eqref{eq:sufficient-decrease-randomized} to get $\eta \bbE[\|\grad f(x^{s-1})\|_{x^{s-1}}^2] \leq \bbE[f(x^{s-1})] - \bbE[f(x^s)]$. Summing this for $s \in [t]$ gives
	\[ \eta \sum_{s \in [t]} \bbE[\|\grad f(x^{s-1})\|_{x^{s-1}}^2] \leq \bbE[f(x^0)] - \bbE[f(x^t)] \leq f(x^0) - f^*. \]
	This holds for any $t$, and since $\eta > 0$, we must have $\bbE[\|\grad f(x^t)\|_{x^t}^2] \to 0$. Furthermore, observe that
	\[ \eta t \min_{s \in [t]} \bbE[\|\grad f(x^{s-1})\|_{x^{s-1}}^2] \leq \eta \sum_{s \in [t]} \bbE[\|\grad f(x^{s-1})\|_{x^{s-1}}^2] \leq f(x^0) - f^*, \]
	from which the second result follows by Jensen's inequality.
	
	\item Almost sure convergence of $\|\grad f(x^t) \|_{x^t} \to 0$ follows by taking $\CF_t$ to be the filtration generated by $\{x^1,\ldots,x^t\}$, $a_t = f(x_t)$ and $b_t = \eta \|\grad f(x^t)\|_{x^t}^2$. Then $a_t \geq f^*$ and integrability of $a_t$ holds since $a_t = f(x^t) \leq f(x^1)$, which holds since by construction $f(x^t)$ is monotonically decreasing. Then \eqref{eq:sufficient-decrease-randomized} is rewritten as $b_t \leq a_t - \bbE[a_t \mid \CF_t]$, which also shows integrability of $b_t$. Then apply Proposition \ref{prop:random-variable-convergence} (see Appendix \ref{sec:appendix}) to deduce $\|\grad f(x^t) \|_{x^t} \to 0$ almost surely. Then any limit point is a stationary point since $f$ is continuously differentiable.
	
	\item Since $f$ is g-convex, we have for any minimizer $x^*$, $f(x^{s-1}) - f^* \leq R \|\grad f(x^t)\|_{x^t}$. This and \eqref{eq:sufficient-decrease-randomized} give $\frac{\eta}{R^2} (f(x^{s-1}) - f^*)^2 \leq f(x^{s-1}) - \bbE[f(x^s) \mid x^{s-1}]$. By Jensen's inequality, the expectation of the left hand side is greater than or equal to $\frac{\eta}{R^2} \left( \bbE[f(x^{s-1})] - f^* \right)^2$, hence $\frac{\eta}{R^2}\left( \bbE[f(x^{s-1})] - f^* \right)^2 \leq \bbE[f(x^{s-1})] - \bbE[f(x^s)]$. Write $A_s = \bbE[f(x^s)] - f^*$, hence we have $\frac{\eta}{R^2} A_s^2 \leq A_{s-1} - A_s$. Dividing by $A_{s-1} A_s$ and recognizing that $1 \leq A_{s-1}/A_s$, $A_{s-1} A_s \leq A_0^2$, we have
	\[ \frac{\eta}{R^2} \leq \frac{1}{A_s} - \frac{1}{A_{s-1}}. \]
	Summing this over $s \in [t]$, we get
	\[ \frac{1}{A_t} - \frac{1}{A_0} \geq t \frac{\eta}{R^2} \implies A_t \leq \frac{A_0}{1 + \frac{A_0 \eta}{R^2} t}, \]
	which gives the result.\Halmos
\end{enumerate}
\end{proof}
Note that there is no explicit dependence on $m$ in the bounds of Theorem \ref{thm:randomized-descent-rates}. In fact, this dependence is hidden in the constant $C$. The constant $C$ is affected by how the probabilities $p_k$ are chosen, as well as $m$. As we will see, for natural choices of probabilities, the dependence on $m$ will be similar to deterministic TSD for product manifolds (Example \ref{ex:gap-product}).

\subsection{Examples}\label{sec:randomized-examples}

We now elaborate a few important examples of $C$-randomized norm ensuring subspace selection rules as well as our new subspace selection rule for a general Stiefel manifold.
We begin by providing a general randomized selection rule which works for any manifold $M$.

\begin{example}\label{ex:randomized-general}
At each $x^{t-1} \in M$, take any orthogonal decomposition $\{\cP(x^{t-1},k)\}_{k \in [m]}$ of $T_x M$. Let $P_t = \cP(x^{t-1},k)$ with probability $p_k > 0$, where $\sum_{k \in [m]} p_k =1$. Then we can take $C = \sqrt{\min_{k \in [m]} p_k}$ in Assumption \ref{ass:random-norm}: for any $v\in T_{x^{t-1}} M$, we see
\[
\bbE\left[ \|P_tv\|_{x^{t-1}}^2 \mid x^{t-1} \right] = \sum_{k \in [m]} p_k \|\cP(x^{t-1},k) v\|_{x^{t-1}}^2 \geq \left(\min_{k \in [m]} p_k\right) \sum_{k \in [m]} \|\cP(x^{t-1},k) v\|_{x^{t-1}}^2= \left(\min_{k \in [m]} p_k\right) \|v\|_{x^{t-1}}^2.
\]
To optimize the bounds in Theorem \ref{thm:randomized-descent-rates}, we make $C$ as large as possible, which is done by taking $p_k = 1/m$, so $C = \sqrt{1/m}$. The bounds then become $\cO(\sqrt{m/t})$ and $\cO(m/t)$ respectively. These bounds match the rates from Example \ref{ex:gap-product} on product manifolds (but with slightly better hidden constants).
\epr
\end{example}

\begin{example}[Product Manifolds and Randomized Coordinate Descent]
Consider again $M=M_1\times\ldots\times M_m$, the product manifold setting from Example \ref{ex:gap-product}. Since for any $x = (x_1,\ldots,x_m)$, $T_x M = \bigoplus_{k \in [m]} T_{x_k} M$, so this yields a natural orthogonal decomposition
\[ \cP(x^{t-1},k) : v \mapsto \left( 0_{x_1^{t-1}}, \ldots, 0_{x_{k-1}^{t-1}}, v_k, 0_{x_{k+1}^{t-1}}, \ldots, 0_{x_m^{t-1}} \right), \quad k \in [m]. \]

Randomized BCD is instantiated in the following way: set $M=\R^n$, $\{S_k\}_{k \in [m]}$ is a partition of $\{e_i\}_{i \in [n]}$, and $M_k=\Span(S_k)$ for $k \in [m]$.
\epr
\end{example}

\begin{example}[Orthogonal Manifold]\label{ex:orthogonal-randomized}
We consider $O_n=St(n,n)$, which has tangent space that can be nicely expressed as
\[
T_X O_n=\Span\{XH_{ij}:1\leq i,j\leq n\}, \quad H_{ij}=e_i e_j^\top - e_j e_i^\top,
\]
for any $X \in O_n$. The collection of tangent vectors $\{X H_{ij}: 1\leq i,j\leq n\}$ is clearly orthogonal under the Riemannian metric $\ip{X H_{ij}}{X H_{i'j'}}_X = \frac{1}{2}\Tr\left( (X H_{ij})^\top X H_{i'j'} \right)$.  We let $P_t = \CP(X^{t-1},ij)$ be the projection onto $\Span(\{X^{t-1} H_{ij}\})$ for each $1 \leq i < j \leq n$, which is chosen with probability $p_{ij} > 0$, where $\sum_{1 \leq i < j \leq n} p_{ij} = 1$. Then $\CP$ satisfies Assumption \ref{ass:random-norm} with $C = \sqrt{\min_{1 \leq i < j \leq n} p_{ij}}$. This is the subspace selection rule of \citep{ShalitChechik2014}.
\epr
\end{example}

\subsubsection{Randomized rule for the Stiefel manifold}\label{sec:randomized-stiefel}

We can significantly generalize the latter example to $\St(p,n)$ with $p<n$. Given a matrix $U \in \bbR^{n \times p}$, denote $\Ker(U^\top) \subseteq \bbR^n$ to be the kernel of $U^\top$. The tangent space $T_U \St(p,n)$ from Example \ref{ex:tensor} can be written as
\[ T_U \St(p,n) = \left\{ U A + \begin{bmatrix} b_1 & \cdots & b_p \end{bmatrix} : \begin{aligned}
&A = -A^\top \in \bbR^{p \times p},\\
&b_\ell \in \Ker(U^\top), \ \ell \in [p]
\end{aligned} \right\}. \]
Furthermore, given $V,V' \in T_U \St(p,n)$, the Riemannian metric is $\la V,V' \ra_U =\Tr(V^\top (I_n - U U^\top/2) V')$.

We consider the non-orthogonal subspace decomposition of $T_U \St(p,n)$:
\[ D = \left\{ U H_{ij} : 1 \leq i < j \leq p \right\} \cup \left( \bigcup_{\ell \in [p]} \left\{ v e_{\ell,p}^\top : v \in \Ker(U^\top), \|v\|_2 = 1 \right\} \right), \]
where $e_{\ell,p}$ is the $\ell$-th standard basis vector in $\bbR^p$. Note that $\Span(D) = T_U \St(p,n)$. Our main idea is draw a random tangent vector $V$ from $D$, use it to define a projection onto the subspace spanned by $V$, then show the random projection satisfies Assumption \ref{ass:random-norm}.

We define the following distribution over $D$. Suppose we have a distribution over the indices $\{(i,j) : 1 \leq i < j \leq p\} \cup [p]$ where a pair $(i,j)$ is drawn with probability $p_{ij} > 0$ and $\ell$ is drawn with probability $p_{\ell} > 0$, so $\sum_{1\leq i<j\leq p} p_{ij} + \sum_{\ell \in [p]} p_\ell = 1$. We now draw a random element from $\{(i,j) : 1 \leq i < j \leq p\} \cup [p]$. If it is a pair $(i,j)$, then the element drawn from $D$ is $V = U H_{ij}$. If it is a single index $\ell$, then we randomly draw from the set $\left\{ v e_{\ell,p}^\top : v \in \Ker(U^\top) \right\}$ by drawing $v$ uniformly at random over $\{v : \|v\|_2=1, v \in \Ker(U^\top)\}$ and setting $V = v e_{\ell,p}^\top$. In practice, drawing $v$ can be implemented as $z \sim N(0,I_n)$, $v' = (I_n - U U^\top) z$, $v = v'/\|v'\|_2$. Since $U$ has orthonormal columns, $I_n - U U^\top$ is simply the projection matrix onto $\Ker(U^\top)$.

\begin{theorem}\label{thm:stiefel-selection}
Fix any $U\in \St(p,n)$, and let $\cP(U)$ be the random projection onto the subspace $\{\alpha V : \alpha \in \bbR\} \subset T_U \St(p,n)$, where $V$ is drawn randomly according to the distribution described above. Let $C := \sqrt{\min\left\{ \min_{1 \leq i < j \leq p} p_{ij}, \min_{\ell \in [p]} p_\ell/(n-p) \right\}}$. Then for any $W \in T_U \St(p,n)$,
\[ \bbE\left[ \left\| \cP(U) W \right\|_U^2 \right] \geq C^2 \|W\|_U^2. \]
\end{theorem}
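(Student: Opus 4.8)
The plan is to compute $\bbE[\|\cP(U)W\|_U^2]$ by conditioning on which element of $D$ is drawn, and then lower-bound the resulting sum by $C^2\|W\|_U^2$. Write an arbitrary $W \in T_U\St(p,n)$ in the block form $W = UA + [b_1 \mid \cdots \mid b_p]$ with $A = -A^\top$ and each $b_\ell \in \Ker(U^\top)$. The key preliminary observation is that the directions in $D$ are \emph{orthogonal} under $\la\cdot,\cdot\ra_U$: the vectors $UH_{ij}$ span the ``skew'' part $\{UA\}$, the vectors $ve_{\ell,p}^\top$ span the ``kernel'' part, and these two parts are orthogonal; moreover within the kernel part, vectors $ve_{\ell,p}^\top$ and $v'e_{\ell',p}^\top$ with $\ell \neq \ell'$ are orthogonal (since $e_{\ell,p}^\top e_{\ell',p} = 0$), while the $UH_{ij}$ are pairwise orthogonal as noted in Example~\ref{ex:orthogonal-randomized}. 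This means $\|W\|_U^2$ decomposes cleanly: using the metric formula $\la V,V'\ra_U = \Tr(V^\top(I_n - UU^\top/2)V')$ and $U^\top U = I_p$, one checks $\|UH_{ij}\|_U^2 = \frac12\Tr(H_{ij}^\top H_{ij}) = 1$ and $\|ve_{\ell,p}^\top\|_U^2 = \|v\|_2^2 = 1$ for unit $v \in \Ker(U^\top)$ (the $UU^\top/2$ term vanishes because $U^\top v = 0$). So $\|W\|_U^2 = \sum_{i<j} \la W, UH_{ij}\ra_U^2 + \sum_{\ell\in[p]} \|b_\ell\|_2^2$, where the coefficient of $UH_{ij}$ is (half) the $(i,j)$ entry of the skew part of $A$.

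Next, for a rank-one orthogonal projection onto $\Span(V)$ with $\|V\|_U = 1$, we have $\|\cP(U)W\|_U^2 = \la W, V\ra_U^2$. Conditioning on the draw: when the index $(i,j)$ is selected (probability $p_{ij}$), $V = UH_{ij}$ contributes $p_{ij}\la W, UH_{ij}\ra_U^2$. When index $\ell$ is selected (probability $p_\ell$), $v$ is uniform on the unit sphere of $\Ker(U^\top)$, which is $(n-p)$-dimensional, and $V = ve_{\ell,p}^\top$; then $\la W, ve_{\ell,p}^\top\ra_U = \la b_\ell, v\ra_2$ (again the $UU^\top/2$ term dies since $U^\top v = 0$), so the conditional expectation over $v$ is $\bbE_v[\la b_\ell, v\ra_2^2] = \|b_\ell\|_2^2/(n-p)$ by the standard spherical second-moment identity. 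Assembling,
\[
\bbE[\|\cP(U)W\|_U^2] = \sum_{1\le i<j\le p} p_{ij}\,\la W, UH_{ij}\ra_U^2 + \sum_{\ell\in[p]} \frac{p_\ell}{n-p}\,\|b_\ell\|_2^2 \geq C^2\Big(\sum_{i<j}\la W, UH_{ij}\ra_U^2 + \sum_\ell \|b_\ell\|_2^2\Big) = C^2\|W\|_U^2,
\]
with $C^2 = \min\{\min_{i<j} p_{ij},\, \min_\ell p_\ell/(n-p)\}$ as claimed.

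The main obstacle is not any single estimate but getting the orthogonality bookkeeping exactly right with the non-standard metric $\la V,V'\ra_U = \Tr(V^\top(I_n - UU^\top/2)V')$: one must verify carefully that the skew part $UA$ and each kernel block $b_\ell e_{\ell,p}^\top$ really are mutually $\la\cdot,\cdot\ra_U$-orthogonal and correctly normalized, so that Parseval's identity gives the clean decomposition of $\|W\|_U^2$ used in the final inequality. The cross term between $UA$ and $ve_{\ell,p}^\top$ is $\Tr((UA)^\top(I_n - UU^\top/2)ve_{\ell,p}^\top) = \Tr(A^\top U^\top v \, e_{\ell,p}^\top) - \tfrac12\Tr(A^\top U^\top v\, e_{\ell,p}^\top)$, both of which vanish because $U^\top v = 0$; the within-kernel cross terms vanish by $e_{\ell,p}^\top e_{\ell',p} = 0$; and I should double-check the spherical moment identity $\bbE_v[\la b,v\ra^2] = \|b\|_2^2/\dim$ is applied with the correct dimension $n-p = \dim\Ker(U^\top)$. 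Once these are pinned down, the rest is the routine conditioning computation above.
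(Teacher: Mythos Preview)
Your proposal is correct and follows essentially the same approach as the paper: decompose $W = UA + [b_1 \mid \cdots \mid b_p]$, compute the projection onto each one-dimensional direction in $D$ via the inner product $\la W,V\ra_U$, condition on the draw, and use the spherical second-moment identity $\bbE_v[\la b_\ell,v\ra^2] = \|b_\ell\|_2^2/(n-p)$ on $\Ker(U^\top)$ to handle the continuous part. The paper carries out the same computation, differing only in that it explicitly derives the spherical moment identity via a symmetry argument rather than citing it as standard; your parenthetical ``(half)'' is a slip (the coefficient is $a_{ij}$, not $a_{ij}/2$), but this does not affect the argument since your stated norm decomposition is correct.
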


\begin{proof}[Proof of Theorem \ref{thm:stiefel-selection}]
Denote $W = U A + \begin{bmatrix} b_1 & \cdots & b_p \end{bmatrix}$ and observe that
\[ \|W\|_U^2 = \sum_{1\leq i<j\leq p} a_{ij}^2 + \sum_{\ell \in [p]} \|b_\ell\|_2^2. \]
Given $V \in T_U \St(p,n)$, the projection of $W$ onto $\{\alpha V : \alpha \in \bbR\}$ is given by
\[ \frac{\la V, W \ra_U}{\|V\|_U^2} V. \]
When $V = U H_{ij}$, $\|V\|_U^2 = 1$, so the projection is
\[
\Tr\left( H_{ij}^\top U^\top (I - U U^\top/2) W \right) U H_{ij} = \frac{1}{2} \Tr\left( H_{ij}^\top U^\top W \right) U H_{ij} = \frac{1}{2} \Tr\left( H_{ij}^\top A \right) U H_{ij} = a_{ij} U H_{ij}.
\]
When $V = v e_{\ell,p}^\top$ and $v \in \Ker(U^\top)$, $\|v\|_2=1$, $\|V\|_U^2 = 1$, so the projection is
\[ \Tr\left( e_{\ell,p} v^\top (I - U U^\top/2) W \right) v e_{\ell,p}^\top = \Tr\left( e_{\ell,p} v^\top W \right) v e_{\ell,p}^\top = (v^\top b_\ell) v e_{\ell,p}. \]

We then have
\[ \bbE\left[ \left\| \cP(U) W \right\|_U^2 \right] = \sum_{1 \leq i < j \leq p} p_{ij} a_{ij}^2 + \sum_{\ell \in [p]} p_\ell \bbE\left[ \left( v^\top b_\ell \right)^2 \right], \]
where the expectations are over the randomness in $v$. To evaluate the expectation, for convenience write $b := b_\ell$, and letting $\bar{u}_1,\ldots,\bar{u}_{n-p}$ be an orthonormal basis for $\Ker(U^\top)$, write $b = \sum_{k \in [n-p]} \beta_k \bar{u}_k$. Clearly $\|b\|_2^2 = \sum_{k \in [n-p]} \beta_k^2$. Then
\[
\bbE\left[ \left( v^\top b \right)^2 \right] = \bbE\left[ \left( \sum_{k \in [n-p]} \beta_k v^\top \bar{u}_k \right)^2 \right]= \sum_{k \in [n-p]} \beta_k^2 \bbE\left[ \left( v^\top \bar{u}_k \right)^2 \right] + 2 \sum_{1 \leq k < k' \leq n-p} \beta_k \beta_{k'} \bbE\left[ \left( v^\top \bar{u}_k \right) \left( v^\top \bar{u}_{k'} \right) \right].
\]
Since $\bar{u}_1,\ldots,\bar{u}_{n-p}$ are an orthonormal basis for $\Ker(U^\top)$ and $v \in \Ker(U^\top)$, we have
\[ 1 = \bbE[\|v\|_2^2] = \sum_{k \in [n-p]} \bbE\left[ \left( v^\top \bar{u}_k \right)^2 \right], \]
and since $v$ is spherically symmetric in $\Ker(U^\top)$, $v^\top \bar{u}_k$ are identically distributed for $k \in [n-p]$. Thus $\sum_{k \in [n-p]} \beta_k^2 \bbE\left[ \left( v^\top \bar{u}_k \right)^2 \right] = \frac{1}{n-p} \|b\|_2^2$. Furthermore, due to symmetry, the pairs $(v^\top \bar{u}_k, v^\top \bar{u}_{k'})$ and $(-v^\top \bar{u}_k, v^\top \bar{u}_{k'})$ have the same distribution, thus
\[ \bbE\left[ \left( v^\top \bar{u}_k \right) \left( v^\top \bar{u}_{k'} \right) \right] = \bbE\left[ \left( -v^\top \bar{u}_k \right) \left( v^\top \bar{u}_{k'} \right) \right] = 0. \]
Therefore,
\[
\bbE\left[ \left( v^\top b \right)^2 \right] = \frac{1}{n-p} \|b\|_2^2\implies \bbE\left[ \left\| \cP(U) W \right\|_U^2 \right] = \sum_{1 \leq i < j \leq p} p_{ij} a_{ij}^2 + \frac{1}{n-p} \sum_{\ell \in [p]} p_\ell \|b\|_2^2 \geq C^2 \|W\|_U^2.\Halmos
\]
\end{proof}

In Algorithm \ref{alg:tangent-subspace-descent}, at each $t \geq 1$ we can choose the random projection $P_t$ according to the distribution $\CP(U)$ outlined above with $U = U^{t-1}$, which will satisfy Assumption \ref{ass:random-norm} by Theorem \ref{thm:stiefel-selection}

\section{Tangent Subspace Descent on the Orthogonal Manifold}\label{sec:orthogonal}

In this section, we consider optimization on the manifold of orthogonal $n \times n$ matrices
\[ 
M := O_n = \left\{ Y \in \bbR^{n \times n} : Y^\top Y = Y Y^\top  = I_n \right\}. 
\]
We first briefly describe the manifold structure (see \citet{EdelmanEtAl1998} for full details). At a given $Y \in O_n$, the tangent space is
\[ T_Y O_n := \left\{ Y A : A \in \Skew_n \right\}, \quad \Skew_n := \left\{ A \in \bbR^{n \times n} : A = -A^\top \right\}. \]
Since $M$ is an embedded submanifold of $\bbR^{n \times n}$, it inherits the usual Euclidean metric
\[ 
g_Y(YA,YB) := \Tr((YA)^\top (YB)) = \Tr(A^\top B). 
\]
Geodesics originating from $Y \in M$ in the direction $Y C \in T_Y M$ are defined by
\[ 
t \mapsto \gamma(t) := Y \Expm(t C), 
\]
where $\Expm$ is the matrix exponential operator. Consequently, the exponential map is
\[ 
\Exp_Y(Y C) := Y \Expm(C). 
\]
Parallel transport of a tangent vector $YA \in T_Y M$ along the geodesic $t \mapsto Y \Expm(t C)$ is given by
\[ 
\Gamma_{Y,C}(t;A) := Y \Expm(t C) \Expm(t C/2)^\top A \Expm(t C/2) \in T_{Y \Expm(t C)} O_n. 
\]
Given a smooth function $\bar{f}:\bbR^{n \times n} \to \bbR$, let $f:O_n \to \bbR$ be its restriction to $O_n$, and $X \in O_n \subset \bbR^{n \times n}$. \citet[Eqs. (3.35), (3.37)]{AbsilEtAl2007book} show us how to compute $\grad f(X)$:
\begin{equation}\label{eq:orthogonal-gradient}
\grad f(X) = \frac{1}{2} X \left( X^\top \grad \bar{f}(X) - \grad \bar{f}(X)^\top X \right),
\end{equation}
where $\grad \bar{f}(X)$ is the usual Euclidean gradient of $\bar{f}$ at $X$. Note that $\frac{1}{2} \left( X^\top \grad \bar{f}(X) - \grad \bar{f}(X)^\top X \right) \in \Skew_n$.

\subsection{Deterministic Tangent Subspace Descent on $O_n$}

The goal of this section is to provide a deterministic subspace selection rule $\CP$ that satisfies Assumption \ref{ass:gap}, and therefore tangent subspace descent with this rule converges according to Theorem \ref{thm:deterministic-descent-rates}. Our subspace decompositions will be based on partitions of the following basis for $\Skew_n$:
\[ \CB := \left\{ \frac{1}{\sqrt{2}} (e_i e_j^\top - e_j e_i^\top) : 1 \leq i < j \leq n \right\}. \]
Let $\{\CB_k\}_{k \in [m]}$ be some partition of $\CB$. Define
\[ \Skew_{n,k} := \Span(\CB_k), \quad B_k := \Proj_{\Skew_{n,k}} : \Skew_n \to \Skew_{n,k}, \]
where $\Proj$ is the usual Euclidean projection. Then, given $Y \in O_n$ and $k\in[m]$, define projections $P_k^Y : T_Y O_n \to T_Y O_n$ as the projection onto the subspace,
\[
S_k^Y := \left\{ Y A : A \in \Skew_{n,k} \right\} \subset T_Y O_n,
\]
which is computed as
\[ YA \mapsto P_k^Y(Y A) := Y B_k(A). \]
We propose the following subspace selection rule $\CP$ for the orthogonal manifold $O_n$: given $Y^0,\ldots,Y^{k-1}$ obtained at step $k$ of the inner loop of Algorithm \ref{alg:tangent-subspace-descent}, define
\begin{equation}\label{eq:orthogonal-subspace-selection}
\CP\left( \left\{ Y^i \right\}_{i=0}^{k-1}, k \right) := P_k^{Y^{k-1}}.%
\end{equation}
Our main result in this section is to show that $\CP$ satisfies Assumption \ref{ass:gap}.

\begin{theorem}\label{thm:orthogonal-gap-ensuring}
	Fix any $\beta \in (0,1)$. The projection selection rule $\CP:=\{P_k\}_{k \in [m]}$ defined in \eqref{eq:orthogonal-subspace-selection}
	is $\left( \max_{k\in[m]}|\CB_k| \sqrt{1-\beta^2}, 2 \log(1 + \sqrt{1-\beta}) \right)$-gap ensuring. Thus, when $\beta > \sqrt{1 - \frac{1}{m \max_{k\in[m]} |\CB_k|}}$, we have
	\[ \|Y^0 A\|_{Y^0} \leq \frac{1}{1 - \sqrt{m(1-\beta^2)} \max_{k\in[m]}|\CB_k|} \left\| Y^0 A \right\|_{Y^0,\CP}, \]
provided $d(Y^0,Y^k) \leq 2 \log(1 + \sqrt{1-\beta})$ for each $k \in [m]$. 
\end{theorem}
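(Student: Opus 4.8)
The plan is to verify Assumption~\ref{ass:gap} directly and then read off the displayed bound from Proposition~\ref{prop:norm-equivalence}. Fix an iteration $t$ and an inner index $k$, and write $Y^0:=Y^{t,0}$, $Z:=Y^{t,k-1}$. Since $\{\CB_k\}_{k\in[m]}$ partitions the orthonormal basis $\CB$ of $\Skew_n$ and $A\mapsto Y^0A$ is an isometry from $(\Skew_n,\|\cdot\|_F)$ onto $T_{Y^0}O_n$, the family $P'_{t,k}:=P_k^{Y^0}$ (the projection onto $S_k^{Y^0}=Y^0\Skew_{n,k}$) is an orthogonal decomposition of $T_{Y^0}O_n$, and $\rank P'_{t,k}=|\CB_k|=\rank P_k^{Z}=\rank P_{t,k}$ because parallel transport is an isomorphism. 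So the only real work is to bound $\|P'_{t,k}-P_{t,k}\|_{Y^0}$ by $\gamma=\max_k|\CB_k|\sqrt{1-\beta^2}$ under the hypothesis $\max_k d(Y^0,Y^{t,k})\le r$.

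First I would make $P_{t,k}$ explicit. Writing $Z=Y^0\Expm(C)$ along the minimizing geodesic (unique since $d(Y^0,Z)\le r<\pi$), so $\|C\|_F=d(Y^0,Z)$, and substituting the $O_n$ parallel-transport formula $\Gamma_{Y,C}(t;A)=Y\Expm(tC)\Expm(tC/2)^\top A\Expm(tC/2)$ into $P_{t,k}=\Gamma_Z^{Y^0}P_k^{Z}\Gamma_{Y^0}^{Z}$, a short computation (using orthogonality of $\Expm(C/2)$) shows that under the isometry $Y^0A\leftrightarrow A$ the operator $P_{t,k}$ is the conjugated projection $\Phi_R^{-1}B_k\Phi_R$, where $R:=\Expm(C/2)\in O_n$ and $\Phi_R(A):=R^\top A R$; equivalently, $P_{t,k}$ is the orthogonal projection onto $\Phi_R^{-1}(\Skew_{n,k})=\Span\{\widetilde E:=RER^\top:E\in\CB_k\}$, and the $\widetilde E$ form an orthonormal family. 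Hence $\|P'_{t,k}-P_{t,k}\|_{Y^0}=\|B_k-\Phi_R^{-1}B_k\Phi_R\|_{\mathrm{op}}$. Writing $B_k=\sum_{E\in\CB_k}\langle E,\cdot\rangle_F E$ and $\Phi_R^{-1}B_k\Phi_R=\sum_{E\in\CB_k}\langle\widetilde E,\cdot\rangle_F\widetilde E$, decomposing $\langle E,A\rangle E-\langle\widetilde E,A\rangle\widetilde E=\langle E,A\rangle(E-\widetilde E)+\langle E-\widetilde E,A\rangle\widetilde E$, and applying the triangle inequality (with Cauchy--Schwarz on the sums over the orthonormal families) reduces the estimate to $\|P'_{t,k}-P_{t,k}\|_{Y^0}\le|\CB_k|\cdot\max_{E\in\CB_k}\|E-\widetilde E\|_F$ after tidying constants.

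The heart of the argument is bounding $\|E_{ij}-\widetilde E_{ij}\|_F$ in terms of $\beta$, and here the peculiar logarithmic form of $r$ is exactly what makes things fit. For $E_{ij}=\tfrac1{\sqrt2}(e_ie_j^\top-e_je_i^\top)$ a direct trace computation yields the clean identity $\langle E_{ij},\widetilde E_{ij}\rangle_F=R_{ii}R_{jj}-R_{ij}R_{ji}$ (the $2\times2$ minor of $R$ on rows and columns $\{i,j\}$), so $\|E_{ij}-\widetilde E_{ij}\|_F^2=2(1-R_{ii}R_{jj}+R_{ij}R_{ji})$. To control the minor I would pass through the spectrum of the real skew matrix $C/2$: its eigenvalues are purely imaginary, say $\{i\theta_a\}$, so $R_{ii}=\sum_a|U_{ia}|^2\cos\theta_a$ for the unitary diagonalizer $U$, whence $R_{ii}\ge\cos(\max_a|\theta_a|)=\cos\|C/2\|_{\mathrm{op}}\ge\cos\|C/2\|_F$ (spectral-radius bound, $\|C/2\|_F<\pi$). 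Combining $\|C/2\|_F\le\tfrac r2=\log(1+\sqrt{1-\beta})$ with $\log(1+s)\le s$ and $\cos x\ge1-x^2/2$ gives $R_{ii}\ge1-\tfrac12(1-\beta)\ge\beta$ for every $i$; then unit-norm rows and columns of $R$ force $R_{ij}^2,R_{ji}^2\le1-\beta^2$, so $\langle E_{ij},\widetilde E_{ij}\rangle_F\ge\beta^2-(1-\beta^2)$ and $\|E_{ij}-\widetilde E_{ij}\|_F=O(\sqrt{1-\beta^2})$. Feeding this back through the previous paragraph gives $\|P'_{t,k}-P_{t,k}\|_{Y^0}\le\max_k|\CB_k|\sqrt{1-\beta^2}=\gamma$ whenever $\max_k d(Y^0,Y^{t,k})\le r$, which is precisely Assumption~\ref{ass:gap}. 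The displayed norm-equivalence is then immediate from Proposition~\ref{prop:norm-equivalence}: $\sqrt m\,\gamma=\sqrt{m(1-\beta^2)}\,\max_k|\CB_k|$ is $<1$ under the stated condition on $\beta$, so $\|Y^0A\|_{Y^0}\le(1-\sqrt m\,\gamma)^{-1}\|Y^0A\|_{Y^0,\CP}$.

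I expect the main obstacle to be the step that converts the global, Frobenius-distance hypothesis $\max_k d(Y^0,Y^{t,k})\le r$ into entrywise control of $R=\Expm(C/2)$ --- the matrix-exponential / eigenvalue estimate giving $R_{ii}\ge\beta$ and $|R_{ij}|,|R_{ji}|\le\sqrt{1-\beta^2}$ --- because the exact threshold $r=2\log(1+\sqrt{1-\beta})$ is tuned so that these bounds line up with the $\sqrt{1-\beta^2}$ appearing in $\gamma$; the rest (the parallel-transport identity turning $P_{t,k}$ into $\Phi_R^{-1}B_k\Phi_R$, the minor identity, and the summation over $\CB_k$) is essentially bookkeeping, though some care is needed to keep constants tight enough to land exactly on $|\CB_k|\sqrt{1-\beta^2}$.
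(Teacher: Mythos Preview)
Your high-level strategy matches the paper's exactly: set $P'_{t,k}:=P_k^{Y^0}$, identify $P_{t,k}$ (via the parallel-transport formula and Lemma~\ref{lemma:projection}) as the orthogonal projection onto $\{Y^0R^\top AR:A\in\Skew_{n,k}\}$ with $R=\Expm(C_k/2)$, bound $\|P'_{t,k}-P_{t,k}\|$ in operator norm, and then read off the norm equivalence from Proposition~\ref{prop:norm-equivalence}.

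The divergence is in the key estimate. The paper does not go through entrywise bounds on $R$; instead it invokes Proposition~\ref{prop:projection-norm}, which reduces the task to showing $\langle E,R^\top ER\rangle_F\geq\beta$ for each $E\in\CB_k$, and it establishes this by expanding $\Expm(C/2)$ as a power series and bounding the remainder: $\langle E,R^\top ER\rangle_F\geq 1-(\exp(\|C\|_F/2)-1)^2$, which equals $\beta$ precisely at $\|C\|_F=2\log(1+\sqrt{1-\beta})$. This is what pins down the exact form of $r$ and delivers $\gamma=|\CB_k|\sqrt{1-\beta^2}$ with no slack.

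Your route---the minor identity $\langle E_{ij},\widetilde E_{ij}\rangle_F=R_{ii}R_{jj}-R_{ij}R_{ji}$ together with spectral bounds $R_{ii}\geq\cos\|C/2\|_F$---is a legitimate alternative and the minor formula is a nice observation, but it is strictly looser: even with the sharper $R_{ii}\geq\tfrac{1+\beta}{2}$ you only obtain $\langle E,\widetilde E\rangle_F\geq\tfrac{(1+\beta)^2}{2}-1=\tfrac{\beta^2-1}{2}+\beta<\beta$, and your hand-rolled triangle-inequality decomposition of $P'_k-P_k$ loses a further constant over the exact eigenvalue computation in Proposition~\ref{prop:projection-norm}. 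So as written you prove \emph{some} $(\gamma',r)$-gap-ensuring property with $\gamma'>|\CB_k|\sqrt{1-\beta^2}$, not the stated one; the ``tidying constants'' cannot close this gap. You correctly flagged this step as the obstacle; the remedy is to replace the entrywise estimate by the series-tail bound and to quote Proposition~\ref{prop:projection-norm} rather than the ad-hoc rank-one splitting.
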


\begin{proof}[Proof of Theorem \ref{thm:orthogonal-gap-ensuring}]
The second part follows from the first part and Proposition \ref{prop:norm-equivalence}, thus it suffices to prove the first part.

To do this, we assume that $d(Y^0,Y^i) \leq 2 \log(1 + \sqrt{1-\beta})$ for all $i \in [k-1]$, and we provide an orthogonal subspace decomposition $\left\{ P_k' \right\}_{k \in [m]}$ of $T_{Y^0} O_n$ such that $\|P_k - P_k'\| \leq \max_{k\in[m]}|\CB_k| \sqrt{1-\beta^2}$, where $\|P_k - P_k'\|$ is the operator norm. A natural choice is to set
\[ 
P_k' := P_k^{Y^0}, \quad k \in [m]. 
\]
Let $C_k \in \Skew_n$ be matrices such that $\Exp_{Y^{k-1}}(Y^{k-1} C_k) = Y^0$, and $\Gamma_{Y^{k-1}}^{Y^0}$ be the parallel transport along the geodesic $Y^{k-1} \Expm(t C_k)$. According to Lemma \ref{lemma:projection}, $P_k$ is the projection onto the subspace
\[
S_k := \left\{ \Gamma_{Y^{k-1}}^{Y^0} (Y^{k-1} A) : A \in \Skew_{n,k} \right\} = \left\{ Y^0 \Expm(C_k/2)^\top A \Expm(C_k/2) : A \in \Skew_{n,k} \right\}.
\]
We compare this with $P_k'$, which is the projection onto the subspace $S_k' :=\left\{ Y^0 A : A \in \Skew_{n,k} \right\}$. The result now follows from Proposition \ref{prop:projection-norm}, provided we can show that for any $A \in \CB_k$,
\[ \Tr\left( A^\top \Expm(C_k/2)^\top A \Expm(C_k/2) \right) \geq \beta. \]
For convenience, denote $C = C_k$. We have
\begin{align*}
\Tr\left( A^\top \Expm(C/2)^T A \Expm(C/2) \right) &= \Tr\left( (\Expm(C/2) A)^\top A \Expm(C/2) \right)\\
&= \Tr\left( \left(\left(I +  \sum_{i=1}^\infty \frac{C^i}{2^i i!} \right) A \right)^\top A \left(I +  \sum_{i=1}^\infty \frac{C^i}{2^i i!} \right) \right)\\
& = \Tr(A^\top A) + \sum_{i,j=1}^\infty \frac{1}{2^{i+j} i! j!} \Tr\left( (C^i A)^\top A C^j \right).
\end{align*}
Notice that using Cauchy-Schwarz and the fact that $\|X X'\|_F \leq \|X\|_F \|X'\|_F$, we have
\begin{align*} 
- \|A\|_F^2 \|C\|_F^{i+j} \leq - \left\| C^i A \right\|_F \left\| A C^j \right\|_F \leq \Tr\left( (C^i A)^\top A C^j \right)\leq \left\| C^i A \right\|_F \left\| A C^j \right\|_F \leq \|A\|_F^2 \|C\|_F^{i+j},
\end{align*}
which implies
\begin{align*}
- \|A\|_F^2 \left( \exp(\|C\|_F/2)-1 \right)^2 = -\|A\|_F^2 \sum_{i,j=1}^\infty \frac{1}{2^{i+j} i! j!} \|C\|_F^{i+j}&\leq \sum_{i,j=1}^\infty \frac{1}{2^{i+j} i! j!} \Tr\left( (C^i A)^\top A C^j \right)\\
&\leq \|A\|_F^2 \sum_{i,j=1}^\infty \frac{1}{2^{i+j} i! j!} \|C\|_F^{i+j}\\
& = \|A\|_F^2 \left( \exp(\|C\|_F/2)-1 \right)^2.
\end{align*}
Therefore
\begin{align*}
\Tr\left( A^\top \Expm(C/2)^T A \Expm(C/2) \right) &= \Tr(A^\top A) + \sum_{i,j=1}^\infty \frac{1}{2^{i+j} i! j!} \Tr\left( (C^i A)^\top A C^j \right)\\
&= \|A\|_F^2 + \sum_{i,j=1}^\infty \frac{1}{2^{i+j} i! j!} \Tr\left( (C^i A)^\top A C^j \right)\\
&\geq \|A\|_F^2 - \|A\|_F^2 \left( \exp(\|C\|_F/2)-1 \right)^2\\
&= \|A\|_F^2 \left( 1- \left( \exp(\|C\|_F/2)-1 \right)^2 \right).
\end{align*}
Note that $\|A\|_F = 1$ by definition, and since $d(Y^0,Y^{k-1}) \leq 2 \log(1 + \sqrt{1-\beta})$, we can choose $C$ such that $\|C\|_F \leq 2 \log(1 + \sqrt{1-\beta})$, or equivalently, $1- \left( \exp(\|C\|_F/2)-1 \right)^2 \geq \beta$, which gives the result.
\Halmos
\end{proof}

Convergence of tangent subspace descent with $\CP$ from \eqref{eq:orthogonal-subspace-selection} now follows from Theorem \ref{thm:deterministic-descent-rates}.

\subsection{Givens Rotations and Computational Complexity}\label{sec:orthogonal-givens}

We now show that, when the partition $\{\CB_k\}_{k \in [m]}$ is chosen appropriately, tangent subspace descent updates take a very simple form. Given $Y^0$, an inner loop of Algorithm \ref{alg:tangent-subspace-descent} computes the sequence $Y^1,\ldots,Y^m$ recursively as
\[ Y^k := \Exp_{Y^{k-1}} \left( - \eta_{t,k} P_k^{Y^{k-1}} \grad f(Y^{k-1}) \right), \quad k \in [m]. \]
Let $A^k = -\eta_{t,k} B_k(\grad f(Y^{k-1}))$. Then
\[ Y^k = Y^{k-1} \Expm(A^k). \]
We now describe the complexity of computing
\[\Expm(A^k) = I_n + \sum_{p=1}^\infty \frac{1}{p!} (A^k)^p.\]
Define $H_{ij} :=  e_i e_j^\top - e_j e_i^\top$. Write each index set $(i,j)$ defining $\CB_k$ as $I_k$, i.e., $\CB_k = \{H_{ij}/\sqrt{2} : (i,j) \in I_k\}$. Now write
\[ A^k = \sum_{(i,j) \in I_k} a_{ij}^k H_{ij}. \]
In order to efficiently compute $\Expm(A^k)$, we make the following assumption on $B_k$.

\begin{assumption}\label{ass:orthogonal-blocks}
For each $k \in [m]$, if pairs $(i,j),(i',j') \in I_k$ are distinct, then $i,i',j,j'$ are also distinct. Consequently, $H_{ij} H_{i'j'}$ is the zero matrix.
\end{assumption}
Assumption \ref{ass:orthogonal-blocks} ensures that $(A^k)^p = \sum_{(i,j) \in I_k} (a_{ij}^k)^p H_{ij}^p$, and $H_{ij}^p$ takes an easily computable form, which results in the following expression for each $(i,j)$-submatrix of $\Expm(A^k)$ where $(i,j) \in I_k$:
\begin{equation}\label{eq:orthongal-givens}
\begin{bmatrix}
\Expm(A^k)_{ii} & \Expm(A^k)_{ij}\\ \Expm(A^k)_{ji} & \Expm(A^k)_{jj}
\end{bmatrix} = \begin{bmatrix}
\cos(a_{ij}^k) & \sin(a_{ij}^k)\\ -\sin(a_{ij}^k) & \cos(a_{ij}^k)
\end{bmatrix}.
\end{equation}
This submatrix is known as a \emph{Givens rotation matrix}. All other off-diagonal entries of $\Expm(A^k)$ are $0$, and all other diagonal entries are $1$.

\begin{proposition}\label{prop:orthogonal-complexity}
Under Assumption \ref{ass:orthogonal-blocks}, the complexity of one inner loop of Algorithm \ref{alg:tangent-subspace-descent} is $O(n^2(n-1)/2)$.
\end{proposition}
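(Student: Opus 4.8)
The plan is to count the floating-point operations in one inner loop, which consists of $m$ iterations, each of which must: (i) compute the Riemannian gradient $\grad f(Y^{k-1})$ via \eqref{eq:orthogonal-gradient}; (ii) project it to get $A^k = -\eta_{t,k} B_k(\grad f(Y^{k-1}))$; (iii) form the matrix exponential $\Expm(A^k)$, which by \eqref{eq:orthongal-givens} and Assumption~\ref{ass:orthogonal-blocks} is a product of $|\CB_k|$ commuting Givens rotations; and (iv) perform the update $Y^k = Y^{k-1}\Expm(A^k)$. The key observation is that right-multiplication by a single Givens rotation in coordinates $(i,j)$ only modifies columns $i$ and $j$ of $Y^{k-1}$, and by Assumption~\ref{ass:orthogonal-blocks} the index pairs in $I_k$ are disjoint, so applying all of $\Expm(A^k)$ costs $O(n)$ per column pair, i.e.\ $O(n\,|\CB_k|)$ total for step (iv). Summing over $k$, steps (iv) altogether cost $O(n \sum_k |\CB_k|) = O(n \binom{n}{2}) = O(n^2(n-1)/2)$, since $\{\CB_k\}$ partitions $\CB$ and $|\CB| = \binom{n}{2}$.

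The subtlety is that steps (i)--(ii) as written appear to cost $O(n^3)$ each — computing $X^\top \grad\bar f(X)$ is a full matrix product — which would dominate and break the claimed bound. The resolution, which I would make explicit, is that we never need the \emph{full} projected gradient: to form $A^k$ we only need the entries $\bigl(\grad f(Y^{k-1})\bigr)_{ij}$ for $(i,j) \in I_k$, equivalently the entries of the skew matrix $\tfrac12(X^\top \grad\bar f(X) - \grad\bar f(X)^\top X)$ indexed by $I_k$. Each such entry is an inner product of two length-$n$ vectors (row $i$ of $X^\top$ against column $j$ of $\grad\bar f$, etc.), costing $O(n)$, and there are $|\CB_k|$ of them, so step (i)--(ii) for inner iteration $k$ costs $O(n\,|\CB_k|)$; summing again gives $O(n^2(n-1)/2)$. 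This of course presumes that $\grad\bar f(Y^{k-1})$ itself — the ambient Euclidean gradient — is available in $O(n\,|\CB_k|)$ or is not counted; I would state this as the standing assumption (as is standard in coordinate-descent complexity analyses, where a cheap ``coordinate oracle'' is assumed), noting that for structured objectives such as the Procrustes problem one can maintain the relevant gradient entries incrementally.

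I would then assemble the bound: each of the $m$ inner iterations costs $O\!\bigl(n\,|\CB_k|\bigr)$ for gradient-entry extraction, $O(|\CB_k|)$ for the trigonometric evaluations forming the $2\times 2$ blocks, and $O(n\,|\CB_k|)$ for the column updates, hence $O(n\,|\CB_k|)$ overall; and $\sum_{k\in[m]} O(n\,|\CB_k|) = O\!\bigl(n |\CB|\bigr) = O\!\bigl(n^2(n-1)/2\bigr)$ because $|\CB| = \binom{n}{2} = n(n-1)/2$. The main obstacle — really the only one — is precisely the point above: making clear why the naive $O(n^3)$ cost of the gradient formula \eqref{eq:orthogonal-gradient} is avoided, namely by computing only the $O(\sum_k |\CB_k|) = O(n^2)$ gradient coordinates actually used, each at $O(n)$ cost, rather than the dense $n\times n$ gradient matrix. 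Everything else is the routine bookkeeping of Givens-rotation arithmetic.
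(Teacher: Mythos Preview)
Your proposal is correct and follows essentially the same approach as the paper: count the cost of forming $\Expm(A^k)$ as $O(|\CB_k|)$ via the Givens-rotation formula \eqref{eq:orthongal-givens}, count the cost of the update $Y^k = Y^{k-1}\Expm(A^k)$ as $O(n|\CB_k|)$ since only $2|\CB_k|$ columns are touched, and sum over $k\in[m]$ using $\sum_k |\CB_k| = |\CB| = n(n-1)/2$.

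In fact you go further than the paper does. The paper's own proof accounts only for steps (iii) and (iv) in your breakdown and is silent on the cost of obtaining the projected gradient entries $A^k$; your proposal explicitly identifies this as a potential $O(n^3)$ bottleneck and resolves it by observing that only the $|\CB_k|$ entries of the skew part of $X^\top \grad\bar f(X)$ indexed by $I_k$ are needed, each computable in $O(n)$, under the standing assumption that the ambient gradient (or its relevant entries) is cheaply available or maintained incrementally. This makes your argument more complete than the paper's, while remaining on the same track.
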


\begin{proof}[Proof of Proposition \ref{prop:orthogonal-complexity}]
Under Assumption \ref{ass:orthogonal-blocks}, when $A^k \in \Span(\CB_k)$, computation of $\Expm(A^k)$ takes time $O(|B_k|)$. Updating $Y^k = Y^{k-1} \Expm(A^k)$ takes time $O(n|B_k|)$, since for each pair $(i,j) \in I_k$, we must perform two vector additions with two columns of $Y^{k-1}$. Overall, the complexity of one inner loop of Algorithm \ref{alg:tangent-subspace-descent} is $O\left( n \sum_{k \in [m]} |\CB_k| \right) = O(n^2(n-1)/2)$.\Halmos
\end{proof}

Note that while the $O(n^3)$ complexity in Proposition \ref{prop:orthogonal-complexity} is of the same order as computing a matrix exponential for a general matrix $\Expm(A)$, which is required for performing Riemannian gradient descent in $O_n$, the advantage of tangent subspace descent is the fact that progress can be made every $O(n|\CB_k|)$ flops, whereas Riemannian gradient descent must wait until $O(n^3)$ flops have elapsed to make progress. This is particularly advantageous when $n$ is large and the $\CB_k$ are chosen to be small.

\subsection{Necessity of Distance-Dependent Bounds}\label{sec:orthogonal-distance-example}

We now show by a counter-example in $O_n$ why Assumption \ref{ass:block-smooth}, the $(\gamma,r)$ gap-ensuring condition, and the bound in Proposition \ref{prop:norm-equivalence} in general requires the dependence on a distance parameter $r$, as promised in Remark \ref{rem:gap-ensuring-distance}.

Suppose we are given iterates $Y^0,Y^1,\ldots,Y^m$ generated from an inner loop of Algorithm \ref{alg:tangent-subspace-descent}, and that we can write $Y^0 = \Exp_{Y^{k-1}}(Y^{k-1} C_k) = Y^{k-1} \Expm(C_k)$ for some $C_k \in \Skew_n$. The proof of Theorem \ref{thm:orthogonal-gap-ensuring} (namely, the application of Lemma \ref{lemma:projection}) showed that the (non-orthogonal) subspace decomposition $\{P_k\}_{k \in [m]}$ of $T_{Y^0} O_n$ is given by
\[ P_k = \Proj_{S_k}, \quad S_k := \left\{ Y^0 \Expm(C_k/2)^\top A \Expm(C_k/2) : A \in \Skew_{n,k} \right\}. \]
The norm under consideration is
\[ \left\| Y^0 A \right\|_{Y^0,\CP} := \sqrt{ \sum_{k \in [m]} \left\| P_k Y^0 A \right\|_{Y^0}^2}. \]
The proof of Theorem \ref{thm:orthogonal-gap-ensuring} showed that when each $C_k$ is sufficiently small, we can show that Assumption \ref{ass:gap} is satisfied, and hence bound $\sup_{A \in \Skew_n} \|Y^0 A\|_{Y^0}/\|Y^0 A\|_{Y^0,\CP}$.

Our goal in this section is to show that when there is no bound on the skew-symmetric matrices $C_k$, $\sup_{A \in \Skew_n} \|Y^0 A\|_{Y^0}/\|Y^0 A\|_{Y^0,\CP}$ can be arbitrarily large. For convenience (although this is not necessary), we set each $\CB_k$ to be a singleton member of $\CB$, so $m=n(n-1)/2$. We show that there exists $k > 1$ and some adversarial choice of $C_k$ that can make $S_k = S_1$, hence $\Span\left( \bigcup_{k \in [m]} S_k \right) \subsetneq T_{Y^0} O_n$, hence $\sup_{A \in \Skew_n} \|Y^0 A\|_{Y^0}/\|Y^0 A\|_{Y^0,\CP} = \infty$.

Note that for $k=1$, $Y^{k-1} = Y^0$, so we can take $C_1 = 0$, hence $S_1 = \{ \alpha Y^0 H_{ij} : \alpha \in \bbR \}$ for some pair $(i,j)$. Consider some $k \neq 1$, where
\[
S_k = \left\{ \alpha Y^0 \Expm(C_k/2)^\top H_{i'j'} \Expm(C_k/2) : \alpha \in \bbR \right\}
\] 
for some other pair $(i',j')$ such that $i,j,i',j'$ are all unique. We show that we can choose $C_k$ in such a way that
\[\Expm(C_k/2)^\top H_{i'j'} \Expm(C_k/2) = H_{ij}.\]
This can be done by setting
\[ C_k = \pi (H_{ii'} + H_{jj'}). \]
According to the arguments in Section \ref{sec:orthogonal-givens}, particularly \eqref{eq:orthongal-givens}, $\Expm(C_k/2)$ is the identity matrix except for the following entries:
\begin{align*}
\Expm(C_k/2)_{ii} &= \Exp(C_k/2)_{jj} = \Expm(C_k/2)_{i'i'} = \Exp(C_k/2)_{j'j'} = 0\\
\Expm(C_k/2)_{ii'} &= \Expm(C_k/2)_{jj'} = 1\\
\Expm(C_k/2)_{i'i} &= \Expm(C_k/2)_{j'j} = -1.
\end{align*}
It is then easily checked that $\Expm(C_k/2)^\top H_{i'j'} \Expm(C_k/2) = H_{ij}$.

\subsection{Numerical Study: Linear Optimization over $O_n$}

In this section, we conduct a numerical study comparing Riemannian gradient descent to tangent subspace descent for linear optimization over $O_n$:
\[ \min_{Y \in O_n} \left\{ f(Y):=\Tr(D^\top Y) \right\}. \]
This is equivalent to the well-known orthogonal Procrustes problem of minimizing a quadratic objective $\|A Y - B\|_F^2$ over $O_n$, which has numerous applications in science and engineering. Since $Y \in O_n$, it is easily seen that this is equivalent to maximizing $\Tr((A^\top B)^\top Y)$.

For Riemannian gradient descent, we use a backtracking line search rule to determine the step size at each iteration. For tangent subspace descent, we show that when $\CB_k$ are chosen to consist of a single pair $(i,j)$, an exact line search over $\eta > 0$ can be computed in closed form to optimize the objective of $Y^{k-1} \Expm(-\eta A^k)$. We use this rule to perform our updates in Algorithm \ref{alg:tangent-subspace-descent}.

\begin{proposition}\label{prop:stiefel-line-search}
Fix some $Y \in O_n$ and $H_{ij} \in \CB$. Let $G = D^\top Y$. Then
\[ \min_{\eta \geq 0} \Tr\left( D^\top Y \Expm(-\eta H_{ij}) \right) = \sum_{i' \neq i,j} g_{i'i'} - \sqrt{(g_{ii} + g_{jj})^2 + (g_{ij} - g_{ji})^2} \]
and the $(i,j)$-submatrix of $\Expm(-\eta^* H_{ij})$
is
\[
\begin{bmatrix}
\Expm(-\eta^* H_{ij})_{ii} & \Expm(-\eta^* H_{ij})_{ij}\\ \Expm(-\eta^* H_{ij})_{ji} & \Expm(-\eta^* H_{ij})_{jj}
\end{bmatrix}= -\frac{1}{\sqrt{(g_{ii} + g_{jj})^2 + (g_{ij} - g_{ji})^2}} \begin{bmatrix}
g_{ii} + g_{jj} & g_{ij} - g_{ji}\\ -(g_{ij} - g_{ji}) & g_{ii} + g_{jj}
\end{bmatrix}. 
\]
\end{proposition}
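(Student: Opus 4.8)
The plan is to exploit the fact that $\Expm(-\eta H_{ij})$ is a Givens rotation acting only in the $\{i,j\}$-plane, so that the objective reduces to an elementary trigonometric function of $\eta$ whose minimum over $\eta \geq 0$ admits a closed form. First I would record the precise form of this rotation. Since $H_{ij} = e_i e_j^\top - e_j e_i^\top$ satisfies $H_{ij}^2 = -(e_i e_i^\top + e_j e_j^\top)$ and therefore $H_{ij}^3 = -H_{ij}$, summing the exponential series (exactly as in the derivation of \eqref{eq:orthongal-givens}, with the singleton block $\CB_k = \{H_{ij}/\sqrt{2}\}$ and $A^k = -\eta H_{ij}$) shows that $\Expm(-\eta H_{ij})$ agrees with $I_n$ outside the principal submatrix indexed by $\{i,j\}$, where it equals $\begin{bmatrix} \cos\eta & -\sin\eta \\ \sin\eta & \cos\eta \end{bmatrix}$.

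Next I would expand the objective using this structure. Writing $G := D^\top Y$ and $M := \Expm(-\eta H_{ij})$, and noting that $M - I_n$ is supported only on rows and columns $i$ and $j$ while every other column of $M$ equals the corresponding standard basis vector, the identity $\Tr(G M) = \sum_{p,q} G_{pq} M_{qp}$ collapses to
\[ \Tr\left( D^\top Y \, \Expm(-\eta H_{ij}) \right) = \sum_{i' \neq i,j} g_{i'i'} + (g_{ii} + g_{jj}) \cos\eta + (g_{ij} - g_{ji}) \sin\eta . \]
Setting $a := g_{ii} + g_{jj}$ and $b := g_{ij} - g_{ji}$, the map $\eta \mapsto a\cos\eta + b\sin\eta$ is $2\pi$-periodic, so its minimum over $\eta \geq 0$ equals its minimum over all of $\R$, which is $-\sqrt{a^2 + b^2}$; when $(a,b) \neq (0,0)$ this value is attained precisely at the $\eta^\star$ with $\cos\eta^\star = -a/\sqrt{a^2+b^2}$ and $\sin\eta^\star = -b/\sqrt{a^2+b^2}$, by the equality case of the Cauchy--Schwarz inequality. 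Substituting $a$ and $b$ back in gives the stated value of the minimum.

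Finally, I would read off the optimal rotation by substituting $\cos\eta^\star$ and $\sin\eta^\star$ into the submatrix $\begin{bmatrix} \cos\eta^\star & -\sin\eta^\star \\ \sin\eta^\star & \cos\eta^\star \end{bmatrix}$, which yields the displayed Givens block. I do not expect a genuine obstacle here: the argument is a direct computation, and the only points requiring attention are the sign bookkeeping dictated by the rotation convention in \eqref{eq:orthongal-givens}, and the degenerate case $a = b = 0$, where the objective does not depend on $\eta$, the square root vanishes, and the identity holds trivially (taking, say, $\eta^\star = 0$).
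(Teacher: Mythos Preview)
Your proposal is correct and follows essentially the same route as the paper: both reduce the objective to $\sum_{i'\neq i,j} g_{i'i'} + (g_{ii}+g_{jj})\cos\eta + (g_{ij}-g_{ji})\sin\eta$ via the Givens-rotation form of $\Expm(-\eta H_{ij})$, then minimize the trigonometric part over the unit circle. The paper phrases the last step as ``linear optimization over the unit ball in $\bbR^2$'' while you invoke Cauchy--Schwarz and periodicity, but these are the same argument; your version is simply more explicit about the sign bookkeeping and the degenerate case $a=b=0$.
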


\begin{proof}[Proof of Proposition \ref{prop:stiefel-line-search}]
Observe that from \eqref{eq:orthongal-givens},
\[\Tr\left( D^\top Y \Expm(-\eta H_{ij}) \right) = \sum_{i' \neq i,j} g_{i'i'} + (g_{ii} + g_{jj}) \cos(-\eta) + (g_{ij} - g_{ji}) \sin(-\eta). \]
Since $(\cos(-\eta),\sin(-\eta))$ belong to the unit ball in $\bbR^2$, the result follows from linear optimization over the unit ball.\Halmos
\end{proof}

Test instances were generated synthetically, inspired by the orthogonal Procrustes problem. We generated $A \in \bbR^{n \times n}$ with i.i.d. $N(0,4)$ entries, a random orthogonal matrix $X \in O_n$, and $B = AX + \epsilon$ where $\epsilon \in \bbR^{n \times n}$ with i.i.d. $N(0,1)$ entries, then set $D = -A^\top B$. This is equivalent to minimizing $\|AY - B\|_F^2$. We tested $n = 50,100,150,200$. For each $n$, we generated 10 instances, and ran both algorithms. Experiments were run on a 2.5GHz personal machine with 16GB memory. We plotted the results in Figure \ref{fig:performance}. Generally, tangent subspace descent takes much fewer cycles to close the gap than Riemannian gradient descent.

\newcommand{\wid}{0.47\textwidth}
\newcommand{\hgt}{0.2}
\newcommand{\scalefactor}{0.85}
\pgfplotsset{footnotesize,
}
\begin{figure}[tb]
\centering
\begin{tikzpicture}
\begin{axis}[
name=ax1,
scale=\scalefactor,
width=\wid,height=\hgt\textheight,
title={$n=150$},
yticklabel style={/pgf/number format/fixed},
xlabel={\% cycles elapsed},
yticklabel style={/pgf/number format/fixed},
xmode=log,
log ticks with fixed point,
ylabel={\% gap closed},
legend style={font=\footnotesize, at={(1.5,-0.3)},anchor=north east,legend columns=2},
]
\addlegendentry{TSD}
\addplot [color=blue] table[x=percent, y=TSDcycle31, col sep=comma] {resn150.csv};
\addlegendentry{GD}
\addplot [color=red] table[x=percent, y=GDcycle31, col sep=comma] {resn150.csv};
\addplot [color=blue] table[x=percent, y=TSDcycle32, col sep=comma] {resn150.csv};
\addplot [color=red] table[x=percent, y=GDcycle32, col sep=comma] {resn150.csv};
\addplot [color=blue] table[x=percent, y=TSDcycle33, col sep=comma] {resn150.csv};
\addplot [color=red] table[x=percent, y=GDcycle33, col sep=comma] {resn150.csv};
\addplot [color=blue] table[x=percent, y=TSDcycle34, col sep=comma] {resn150.csv};
\addplot [color=red] table[x=percent, y=GDcycle34, col sep=comma] {resn150.csv};
\addplot [color=blue] table[x=percent, y=TSDcycle35, col sep=comma] {resn150.csv};
\addplot [color=red] table[x=percent, y=GDcycle35, col sep=comma] {resn150.csv};
\addplot [color=blue] table[x=percent, y=TSDcycle36, col sep=comma] {resn150.csv};
\addplot [color=red] table[x=percent, y=GDcycle36, col sep=comma] {resn150.csv};
\addplot [color=blue] table[x=percent, y=TSDcycle37, col sep=comma] {resn150.csv};
\addplot [color=red] table[x=percent, y=GDcycle37, col sep=comma] {resn150.csv};
\addplot [color=blue] table[x=percent, y=TSDcycle38, col sep=comma] {resn150.csv};
\addplot [color=red] table[x=percent, y=GDcycle38, col sep=comma] {resn150.csv};
\addplot [color=blue] table[x=percent, y=TSDcycle39, col sep=comma] {resn150.csv};
\addplot [color=red] table[x=percent, y=GDcycle39, col sep=comma] {resn150.csv};
\addplot [color=blue] table[x=percent, y=TSDcycle40, col sep=comma] {resn150.csv};
\addplot [color=red] table[x=percent, y=GDcycle40, col sep=comma] {resn150.csv};
\end{axis}

\begin{axis}[
at={(ax1.north west)},
yshift=1.5cm,
scale=\scalefactor,
width=\wid,height=\hgt\textheight,
title={$n=50$},
yticklabel style={/pgf/number format/fixed},
xlabel={\% cycles elapsed},
yticklabel style={/pgf/number format/fixed},
xmode=log,
log ticks with fixed point,
ylabel={\% gap closed},
]
\addplot [color=blue] table[x=percent, y=TSDcycle11, col sep=comma] {resn50.csv};
\addplot [color=red] table[x=percent, y=GDcycle11, col sep=comma] {resn50.csv};
\addplot [color=blue] table[x=percent, y=TSDcycle12, col sep=comma] {resn50.csv};
\addplot [color=red] table[x=percent, y=GDcycle12, col sep=comma] {resn50.csv};
\addplot [color=blue] table[x=percent, y=TSDcycle13, col sep=comma] {resn50.csv};
\addplot [color=red] table[x=percent, y=GDcycle13, col sep=comma] {resn50.csv};
\addplot [color=blue] table[x=percent, y=TSDcycle14, col sep=comma] {resn50.csv};
\addplot [color=red] table[x=percent, y=GDcycle14, col sep=comma] {resn50.csv};
\addplot [color=blue] table[x=percent, y=TSDcycle15, col sep=comma] {resn50.csv};
\addplot [color=red] table[x=percent, y=GDcycle15, col sep=comma] {resn50.csv};
\addplot [color=blue] table[x=percent, y=TSDcycle16, col sep=comma] {resn50.csv};
\addplot [color=red] table[x=percent, y=GDcycle16, col sep=comma] {resn50.csv};
\addplot [color=blue] table[x=percent, y=TSDcycle17, col sep=comma] {resn50.csv};
\addplot [color=red] table[x=percent, y=GDcycle17, col sep=comma] {resn50.csv};
\addplot [color=blue] table[x=percent, y=TSDcycle18, col sep=comma] {resn50.csv};
\addplot [color=red] table[x=percent, y=GDcycle18, col sep=comma] {resn50.csv};
\addplot [color=blue] table[x=percent, y=TSDcycle19, col sep=comma] {resn50.csv};
\addplot [color=red] table[x=percent, y=GDcycle19, col sep=comma] {resn50.csv};
\addplot [color=blue] table[x=percent, y=TSDcycle20, col sep=comma] {resn50.csv};
\addplot [color=red] table[x=percent, y=GDcycle20, col sep=comma] {resn50.csv};
\end{axis}

\begin{axis}[
at={(ax1.north east)},
xshift=2cm,
yshift=1.5cm,
scale=\scalefactor,
width=\wid,height=\hgt\textheight,
title={$n=100$},
yticklabel style={/pgf/number format/fixed},
xlabel={\% cycles elapsed},
yticklabel style={/pgf/number format/fixed},
xmode=log,
log ticks with fixed point,
ylabel={\% gap closed},
]
\addplot [color=blue] table[x=percent, y=TSDcycle21, col sep=comma] {resn100.csv};
\addplot [color=red] table[x=percent, y=GDcycle21, col sep=comma] {resn100.csv};
\addplot [color=blue] table[x=percent, y=TSDcycle22, col sep=comma] {resn100.csv};
\addplot [color=red] table[x=percent, y=GDcycle22, col sep=comma] {resn100.csv};
\addplot [color=blue] table[x=percent, y=TSDcycle23, col sep=comma] {resn100.csv};
\addplot [color=red] table[x=percent, y=GDcycle23, col sep=comma] {resn100.csv};
\addplot [color=blue] table[x=percent, y=TSDcycle24, col sep=comma] {resn100.csv};
\addplot [color=red] table[x=percent, y=GDcycle24, col sep=comma] {resn100.csv};
\addplot [color=blue] table[x=percent, y=TSDcycle25, col sep=comma] {resn100.csv};
\addplot [color=red] table[x=percent, y=GDcycle25, col sep=comma] {resn100.csv};
\addplot [color=blue] table[x=percent, y=TSDcycle26, col sep=comma] {resn100.csv};
\addplot [color=red] table[x=percent, y=GDcycle26, col sep=comma] {resn100.csv};
\addplot [color=blue] table[x=percent, y=TSDcycle27, col sep=comma] {resn100.csv};
\addplot [color=red] table[x=percent, y=GDcycle27, col sep=comma] {resn100.csv};
\addplot [color=blue] table[x=percent, y=TSDcycle28, col sep=comma] {resn100.csv};
\addplot [color=red] table[x=percent, y=GDcycle28, col sep=comma] {resn100.csv};
\addplot [color=blue] table[x=percent, y=TSDcycle29, col sep=comma] {resn100.csv};
\addplot [color=red] table[x=percent, y=GDcycle29, col sep=comma] {resn100.csv};
\addplot [color=blue] table[x=percent, y=TSDcycle30, col sep=comma] {resn100.csv};
\addplot [color=red] table[x=percent, y=GDcycle30, col sep=comma] {resn100.csv};
\end{axis}

\begin{axis}[
at={(ax1.south east)},
xshift=2cm,
scale=\scalefactor,
width=\wid,height=\hgt\textheight,
title={$n=200$},
yticklabel style={/pgf/number format/fixed},
xlabel={\% cycles elapsed},
yticklabel style={/pgf/number format/fixed},
xmode=log,
log ticks with fixed point,
ylabel={\% gap closed},
]
\addplot [color=blue] table[x=percent, y=TSDcycle41, col sep=comma] {resn200.csv};
\addplot [color=red] table[x=percent, y=GDcycle41, col sep=comma] {resn200.csv};
\addplot [color=blue] table[x=percent, y=TSDcycle42, col sep=comma] {resn200.csv};
\addplot [color=red] table[x=percent, y=GDcycle42, col sep=comma] {resn200.csv};
\addplot [color=blue] table[x=percent, y=TSDcycle43, col sep=comma] {resn200.csv};
\addplot [color=red] table[x=percent, y=GDcycle43, col sep=comma] {resn200.csv};
\addplot [color=blue] table[x=percent, y=TSDcycle44, col sep=comma] {resn200.csv};
\addplot [color=red] table[x=percent, y=GDcycle44, col sep=comma] {resn200.csv};
\addplot [color=blue] table[x=percent, y=TSDcycle45, col sep=comma] {resn200.csv};
\addplot [color=red] table[x=percent, y=GDcycle45, col sep=comma] {resn200.csv};
\addplot [color=blue] table[x=percent, y=TSDcycle46, col sep=comma] {resn200.csv};
\addplot [color=red] table[x=percent, y=GDcycle46, col sep=comma] {resn200.csv};
\addplot [color=blue] table[x=percent, y=TSDcycle47, col sep=comma] {resn200.csv};
\addplot [color=red] table[x=percent, y=GDcycle47, col sep=comma] {resn200.csv};
\addplot [color=blue] table[x=percent, y=TSDcycle48, col sep=comma] {resn200.csv};
\addplot [color=red] table[x=percent, y=GDcycle48, col sep=comma] {resn200.csv};
\addplot [color=blue] table[x=percent, y=TSDcycle49, col sep=comma] {resn200.csv};
\addplot [color=red] table[x=percent, y=GDcycle49, col sep=comma] {resn200.csv};
\addplot [color=blue] table[x=percent, y=TSDcycle50, col sep=comma] {resn200.csv};
\addplot [color=red] table[x=percent, y=GDcycle50, col sep=comma] {resn200.csv};
\end{axis}
\end{tikzpicture}
\caption{Results for TSD (blue) vs GD (red). The horizontal axis shows the cycles elapsed as a percentage of the largest number of cycles for that instance, while the vertical axis shows the gap closed, as a percentage of the best objective value found across both algorithms.}
\label{fig:performance}
\end{figure}
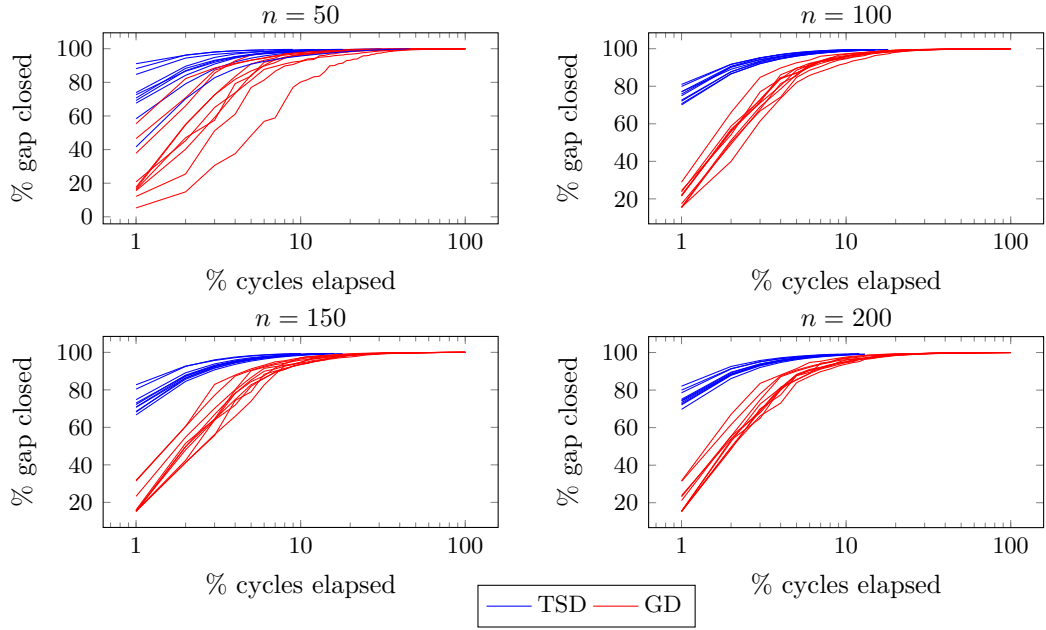

\section{Conclusion}\label{sec:conclusion}

In this paper we have presented and analysed a generalization of coordinate descent to manifold domains called tangent subspace descent. We found that the crux of ensuring convergence is a prudent choice of subspace selection rule $\cP$. For deterministic rules, are precisely those that satisfy the $(\gamma,r)$-gap ensuring condition while appropriate randomized rules satisfy the $C$-randomized norm condition. We proposed a new deterministic rule for optimization over the manifold of square orthogonal matrices, and applied TSD with this rule to the Orhthogonal Procrustes Problem. Our numerical experiments indicate the TSD's promise in application. We also provided a new randomized subspace selection rule for optimization over a general Stiefel manifold.

There are several interesting directions that can stem from this work, we give two here. First, it is known that accelerated rates for coordinate descent can be achieved with a simple modification; it is unknown whether something similar can be done by appealing to recent work \citep{ZhangSra2018} on extending Nesterov's accelerated gradient algorithm to manifolds. Second, it is well-known that replacing the exponential map with a retraction operator can result in computational savings; it would be interesting to see how retractions can be incorporated into the TSD framework.

\subsection*{Acknowledgements}
This research is, in part, funded by Award N660011824020 from the DARPA Lagrange Program and NSF Award 1740707.


\newpage

\appendix
\section{Auxiliary Results}\label{sec:appendix}

\begin{proposition}\label{prop:random-variable-convergence}
	Let $\{a_t,b_t\}_{t \in \bbN}$ be integrable sequences of random variables adapted to a filtration $\CF_1 \subseteq \CF_2 \subseteq \ldots \subseteq \CF$ of some probability space $(\Omega,\CF,\bbP)$, which almost surely satisfy for all $t \in \bbN$
	\begin{align*}
	0 \leq b_t &\leq a_t - \bbE[a_{t+1} \mid \CF_t]\\
	a_t &\geq a^* \in \bbR.
	\end{align*}
	Then $b_t \to 0$ almost surely.
\end{proposition}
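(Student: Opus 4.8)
The plan is to convert the almost-sure claim into a summability claim by a telescoping argument on expectations. First, since $b_t \geq 0$ almost surely, the hypothesis yields $\bbE[a_{t+1} \mid \CF_t] \leq a_t$, so $\{a_t\}$ is a supermartingale bounded below by $a^*$; we will not need its convergence, only the resulting control on cumulative decrements. Taking expectations in $0 \leq b_t \leq a_t - \bbE[a_{t+1}\mid\CF_t]$ and using the tower property together with integrability of each $a_t$ gives the key inequality $\bbE[b_t] \leq \bbE[a_t] - \bbE[a_{t+1}]$.

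Next I would sum this over $t \in \{1,\dots,T\}$. The right-hand side telescopes to $\bbE[a_1] - \bbE[a_{T+1}]$, and since $a_{T+1} \geq a^*$ pointwise we obtain $\sum_{t=1}^T \bbE[b_t] \leq \bbE[a_1] - a^*$. This bound is uniform in $T$ and the summands are nonnegative, so letting $T \to \infty$ gives $\sum_{t=1}^\infty \bbE[b_t] \leq \bbE[a_1] - a^* < \infty$.

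Finally, by Tonelli's theorem (valid because $b_t \geq 0$), $\bbE\!\left[\sum_{t=1}^\infty b_t\right] = \sum_{t=1}^\infty \bbE[b_t] < \infty$, so $\sum_{t=1}^\infty b_t < \infty$ almost surely; in particular $b_t \to 0$ almost surely, as claimed.

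There is no genuinely hard step here: the only things to watch are that integrability of each $a_t$ is what makes the telescoping of $\bbE[a_t] - \bbE[a_{t+1}]$ legitimate, and that the uniform lower bound $a_t \geq a^*$ is precisely what keeps the partial sums $\sum_{t=1}^T \bbE[b_t]$ bounded. The passage from summable expectations to almost-sure summability is the routine monotone-convergence (Tonelli) step.
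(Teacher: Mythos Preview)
Your proof is correct. The first half---taking expectations, telescoping, and using the lower bound $a_t \geq a^*$ to obtain $\sum_{t=1}^\infty \bbE[b_t] \leq \bbE[a_1] - a^* < \infty$---is identical to the paper's argument. The difference lies in the passage from summability of expectations to almost-sure convergence of $b_t$. You invoke Tonelli to conclude that $\sum_{t \geq 1} b_t < \infty$ almost surely, which immediately forces $b_t \to 0$. The paper instead applies Markov's inequality to each term to get $\sum_{t \geq 1} \bbP[b_t \geq \epsilon] < \infty$, then Borel--Cantelli to obtain $\bbP[b_t \geq \epsilon \text{ i.o.}] = 0$, and finally a union bound over a countable sequence $\epsilon_n \downarrow 0$. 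Your route is shorter and yields the slightly stronger conclusion that the series $\sum_t b_t$ converges almost surely; the paper's route is a standard Borel--Cantelli pattern but requires a couple more lines. Both are entirely valid.
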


\begin{proof}[Proof of Proposition \ref{prop:random-variable-convergence}]
	First, observe that by taking expectation, and using the law of total expectation, for each $t \in \bbN$, we have $\bbE[b_t] \leq \bbE[a_t] - \bbE[a_{t+1}]$. Since the right hand side telescopes, we have
	\[ \sum_{t \in T} \bbE[b_t] \leq \bbE[a_1] - \bbE[a_{T+1}] \leq \bbE[a_1] - a^*, \]
	where the second inequality follows since $a_{t+1} \geq a^*$ almost surely.
	
	Now fix some $\epsilon > 0$. By Markov's inequality, $\bbE[b_t]/\epsilon \geq \bbP[b_t \geq \epsilon]$, therefore substituting this into the previous inequality and taking $T \to \infty$ gives
	\[ \sum_{t=1}^\infty \bbP[b_t \geq \epsilon] \leq \frac{\bbE[a_1] - a^*}{\epsilon} < \infty. \]
	By the Borel-Cantelli lemma, we know that $\bbP[b_t \geq \epsilon \text{ infinitely often}] = 0$.
	
	In fact, this holds for any $\epsilon > 0$. Let $\{\epsilon_n\}_{n \in \bbN}$ be a sequence such that $\epsilon_n \to 0$. Then by the union bound
	\[ \bbP\left[ \exists n \text{ s.t. } b_t \geq \epsilon_n \text{ infinitely often} \right] \leq \sum_{n=1}^\infty \bbP[b_t \geq \epsilon_n \text{ infinitely often}] = 0. \]
	Finally, we observe that $\bbP[b_t \to 0] = 1-\bbP\left[ \exists n \text{ s.t. } b_t \geq \epsilon_n \text{ infinitely often} \right] = 1$.
\end{proof}

\begin{lemma}\label{lemma:projection}
	Let $V,W$ be two finite-dimensional vector spaces of equal dimension. Let $U:V \to W$ be an invertible unitary map (so $U U^{-1} = \id_W$, $U^{-1} U = \id_V$). Let $P:V \to V$ be a projection (i.e., $P = P^2$) onto a subspace $S$ of $V$. Then the projection onto the subspace $U S = \left\{ U v : v \in S \right\} \subset W$ is $U P U^{-1}$. 
\end{lemma}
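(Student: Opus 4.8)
The plan is to verify directly that $Q := U P U^{-1}$ is the orthogonal projection onto $US$, which amounts to checking three things: (i) $Q$ is idempotent, (ii) $Q$ maps $W$ onto $US$ and acts as the identity on $US$, and (iii) $Q$ is self-adjoint (equivalently, $\ker Q \perp \operatorname{Im} Q$), so that it is the \emph{orthogonal} projection and not merely an oblique one. Steps (i) and (ii) are pure linear algebra and use only that $U$ is invertible; step (iii) is where the \emph{unitary} hypothesis is essential.

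First I would establish idempotency: $Q^2 = U P U^{-1} U P U^{-1} = U P^2 U^{-1} = U P U^{-1} = Q$, using $U^{-1}U = \id_V$ and $P^2 = P$. Next, the image: since $U$ is surjective and $P$ has image $S$, we get $\operatorname{Im}(Q) = U(\operatorname{Im}(P)) = U S$. Moreover, for $w = Uv$ with $v \in S$, we have $Qw = UPU^{-1}Uv = UPv = Uv = w$ because $P$ fixes $S$ pointwise; hence $Q$ restricts to the identity on $US$. Together with idempotency this shows $Q$ is \emph{a} projection onto $US$.

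The remaining point, and the only one requiring real thought, is that $Q$ is the \emph{orthogonal} projection onto $US$. I would argue via adjoints: since $U$ is unitary, $U^{-1} = U^*$, so $Q = U P U^*$ and hence $Q^* = (U P U^*)^* = U P^* U^* = U P U^* = Q$, where I use that the orthogonal projection $P$ is self-adjoint, $P^* = P$. A self-adjoint idempotent is exactly an orthogonal projection, so $Q = UPU^{-1}$ is the orthogonal projection onto $US$. (Alternatively, one can check $\ker Q = U(\ker P) = U(S^\perp)$ and note that $U$ unitary sends $S^\perp$ to $(US)^\perp$, so $\ker Q \perp \operatorname{Im} Q$; this is essentially the same computation phrased geometrically.)

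The main obstacle is conceptual rather than technical: one must be careful that "$P$ is a projection" in the statement means the orthogonal projection (which the paper's usage of $\Proj$ and $\Pi_S$ indicates), since only then is $P^* = P$ available and only then is the conclusion "the projection onto $US$" meant in the orthogonal sense. Everything else is a two-line manipulation. I would present the adjoint computation as the cleanest route, remarking that the identity $U^{-1} = U^*$ is precisely what makes the unitary assumption do its work.
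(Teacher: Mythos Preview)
Your proof is correct, and its first two steps (idempotency and $\Image(Q)=US$) are exactly what the paper does. The paper's proof actually stops there: it verifies $(UPU^{-1})^2 = UPU^{-1}$ and that the image equals $US$, and concludes. Your step (iii), the self-adjointness check $Q^* = UP^*U^* = UPU^{-1} = Q$ via $U^{-1}=U^*$, is an addition the paper omits. This is a genuine improvement rather than a deviation: as you correctly observe, the lemma is invoked in the proof of Theorem~\ref{thm:orthogonal-gap-ensuring} and fed into Proposition~\ref{prop:projection-norm}, both of which need $P_k$ to be the \emph{orthogonal} projection onto $S_k$, so the unitary hypothesis must be used somewhere, and your adjoint computation is precisely where. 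Without it, the paper's argument only shows $UPU^{-1}$ is \emph{some} idempotent with range $US$, which would hold for any invertible $U$.
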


\begin{proof}[Proof of Lemma \ref{lemma:projection}]
	Clearly, $(U P U^{-1}) (U P U^{-1}) = U P^2 U^{-1} = U P U^{-1}$ so it is a projection operator. It remains to check that the image is $US$. Clearly, we have
	$\Image(UPU^{-1}) \subseteq US$. Now take some $v \in S$. We want to show that there exists some $w \in W$ such that $U P U^{-1} w = U v$. We can easily take $w = U v$, which gives $U P U^{-1} w = U P U^{-1} (U v) = U P v = U v$. Thus, $\Image(UPU^{-1}) \supseteq US$.
\end{proof}

\begin{proposition}\label{prop:projection-norm}
	Let $U$,$U' \subseteq\R^n$ be subspaces of dimension $d$ in some Hilbert space with projection operators $P_U,P_{U'}$. If $u_1,\ldots,u_d$ and $u_1',\ldots,u_d'$ are orthonormal bases for $U$ and $U'$ respectively satisfying $\min_{i=1,\ldots,d}\ip{u_i}{u_i'}\geq\beta$ then $\|P_U-P_{U'}\|\leq d\sqrt{1-\beta^2}$.
\end{proposition}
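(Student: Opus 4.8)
The plan is to expand each projector in its orthonormal basis and thereby reduce the estimate to a rank-two calculation. Since $u_1,\dots,u_d$ is an orthonormal basis of $U$ we have $P_U = \sum_{i=1}^d u_i u_i^\top$, and likewise $P_{U'} = \sum_{i=1}^d u_i'(u_i')^\top$, so that $P_U - P_{U'} = \sum_{i=1}^d \big(u_i u_i^\top - u_i'(u_i')^\top\big)$. By the triangle inequality for the operator norm it then suffices to prove, for each $i$, the single-index bound $\big\|u_i u_i^\top - u_i'(u_i')^\top\big\| \le \sqrt{1-\beta^2}$; summing these $d$ inequalities produces exactly $d\sqrt{1-\beta^2}$.

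For the single-index bound, fix $i$ and set $u := u_i$, $u' := u_i'$, $c := \langle u, u'\rangle$. Both vectors are unit, so Cauchy--Schwarz gives $c \le 1$, and the hypothesis gives $c \ge \beta > 0$, hence $c^2 \ge \beta^2$. The operator $T := u u^\top - u'(u')^\top$ is symmetric and annihilates $\mathrm{span}\{u,u'\}^\perp$, so $\|T\|$ equals the spectral radius of the restriction of $T$ to the (at most two-dimensional) subspace $\mathrm{span}\{u,u'\}$. I would compute that restriction in an orthonormal basis of $\mathrm{span}\{u,u'\}$ whose first vector is $u$: the resulting $2\times2$ symmetric matrix has trace $0$ and determinant $-(1-c^2)$, hence eigenvalues $\pm\sqrt{1-c^2}$, so $\|T\| = \sqrt{1-c^2} \le \sqrt{1-\beta^2}$. (Coordinate-free alternative: $T = \tfrac12\big((u-u')(u+u')^\top + (u+u')(u-u')^\top\big)$, where $u-u' \perp u+u'$ because $\|u\|=\|u'\|=1$; for a symmetrized product of orthogonal vectors the operator norm is the product of their lengths scaled by $\tfrac12$, giving $\|T\| = \tfrac12\sqrt{(2-2c)(2+2c)} = \sqrt{1-c^2}$. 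I would keep whichever version reads more cleanly.)

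Combining the two steps yields $\|P_U - P_{U'}\| \le \sum_{i=1}^d \sqrt{1 - \langle u_i, u_i'\rangle^2} \le d\sqrt{1-\beta^2}$, which is the claim. I do not anticipate any real obstacle here; the only points needing a moment's care are that the passage from $\langle u_i,u_i'\rangle \ge \beta$ to $\langle u_i,u_i'\rangle^2 \ge \beta^2$ relies on $\beta > 0$ (true in all intended applications, where $\beta\in(0,1)$), and that only the operator-norm triangle inequality is available to combine the $d$ rank-two pieces — there is no orthogonality among the summands $u_i u_i^\top - u_i'(u_i')^\top$ — which is exactly why the (admittedly loose) factor $d$ appears in the bound.
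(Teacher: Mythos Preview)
Your proposal is correct and follows essentially the same route as the paper: expand each projector in its orthonormal basis, apply the triangle inequality to reduce to rank-two terms, then compute the eigenvalues of each rank-two piece on the two-dimensional span to get $\sqrt{1-c^2}$. The paper writes out the explicit $2\times2$ matrix in the basis $\{u,w\}$ with $w\perp u$, whereas you shortcut via the trace/determinant (and offer a nice coordinate-free variant), but the argument is the same in substance.
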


\begin{proof}[Proof of Proposition \ref{prop:projection-norm}]
	Observe by orthonormality of the bases that
	\[
	P_U(\cdot) = \sum_{i=1}^d u_i \la u_i, \cdot \ra,\quad P_{U'}(\cdot) = \sum_{i=1}^d u'_i \la u'_i, \cdot \ra,
	\]
	so
	\[
	\|P_U-P_{U'}\|^2=\left\|\sum_{i=1}^d\left(u_i \la u_i, \cdot \ra - u_i' \la u_i', \cdot \ra \right)\right\|\leq\sum_{i=1}^d\left\| u_i \la u_i, \cdot \ra - u_i' \la u_i', \cdot \ra \right\|.
	\]
	It suffices to bound $\left\| u_i \la u_i, \cdot \ra - u_i' \la u_i', \cdot \ra \right\|$ for $1\leq i\leq d$. To simplify notation, let $u=u_i$ and $v=u'_i$. We can write $v=\alpha u+\sqrt{1-\alpha^2}w$ where $\alpha= \la u,v \ra$ and $w=\frac{v-\alpha u}{\sqrt{1-\alpha^2}}$ is a normal vector othogonal to $u$. Then we can write
	\[
	u \la u, \cdot \ra -v \la v,\cdot \ra =(1-\alpha^2) u \la u, \cdot \ra - \alpha \sqrt{1-\alpha^2} (u \la w, \cdot \ra + w \la u, \cdot \ra) - (1-\alpha^2)w \la w, \cdot \ra.
	\]
	Thus, $u \la u, \cdot \ra -v \la v,\cdot \ra$ essentially acts on the 2-dimensional subspace $\Span(\{u,v\})$ of the Hilbert space. A basis for this subspace is $\{u,w\}$, and the matrix representation of $u \la u, \cdot \ra -v \la v,\cdot \ra$ on this subspace is
	\[
	\begin{bmatrix}
	1-\alpha^2 & -\alpha\sqrt{1-\alpha^2}\\
	-\alpha\sqrt{1-\alpha^2} & -(1-\alpha^2)
	\end{bmatrix}.
	\]
	Hence, the eigenvalues are readily computed to be $\pm \sqrt{1-\alpha^2}$, which is bounded above by $\sqrt{1-\beta^2}$. Thus, $\| u \la u, \cdot \ra -v \la v,\cdot \ra \| \leq \sqrt{1-\beta^2}$, and
	\[
	\|P_U-P_{U'}\|^2\leq\sum_{i=1}^d\left\| u_i \la u_i, \cdot \ra - u_i' \la u_i', \cdot \ra \right\| \leq d \sqrt{1-\beta^2}.
	\]
\end{proof}

\end{document}